\newcommand{\hlabel}[1]{%
   \protected@write \@auxout {}%
         {\string \newlabel {#1}{{\theteorema}{\thepage}{}{#1}{}} }%
   \hypertarget{#1}{}
}
\providecommand{\U}[1]{\protect\rule{.1in}{.1in}}
\title{Poisson--Poincar\'e reduction for Field Theories}
\author[1,2]{Miguel \'A. Berbel}
\author[1,3]{Marco Castrill\'on L\'opez}
\affil[1]{Dept. \'Algebra, Geometr\'ia y Topolog\'ia, Facultad de CC. Matem\'aticas, Universidad Complutense de Madrid,
Madrid 28040, Spain}
\affil[2]{e-mail: mberbel@ucm.es} \affil[3]{e-mail: mcastri@mat.ucm.es}
\date{\today}                     
\newtheorem{theorem}{Theorem}[section]
\newtheorem{definition}[theorem]{Definition}
\newtheorem{lemma}[theorem]{Lemma}
\newtheorem{proposition}[theorem]{Proposition}
\newtheorem{remark}[theorem]{Remark}
\newcommand{\R}{\mathbb{R}}
\newcommand{\F}{\mathbb{F}}
\newcommand{\sS}{\mathbb{S}}
\newcommand{\g}{\tilde{\mathfrak{g}}}
\newcommand{\A}{\mathcal{A}}
\newcommand{\B}{\mathcal{B}}
\newcommand{\id}{\mathrm{id}}
\newcommand{\Ad}{\mathrm{Ad}}
\newcommand{\ad}{\mathrm{ad}}
\newcommand{\vol}{\mathrm{v}}
\newcommand{\tp}{\tilde{p}}
\newcommand{\hlift}{\mathrm{hor}}
\newcommand{\tsigma}{\tilde{\sigma}}
\newcommand{\ubar}[1]{\underaccent{\bar}{#1}}
\newcommand\restr[2]{{
  \left.\kern-\nulldelimiterspace 
  #1 
  \vphantom{\big|} 
  \right|_{#2} 
  }}
\begin{document}
\maketitle

\begin{abstract}
Given a Hamiltonian system on a fiber bundle, there is a Poisson covariant formulation of the Hamilton equations. When a Lie group $G$ acts freely, properly, preserving the fibers of the bundle and the Hamiltonian density is $G$-invariant, we study the reduction of this formulation to obtain an analogue of Poisson--Poincar\'e reduction for field theories. This procedure is related to the Lagrange--Poincar\'e reduction for field theories via a Legendre transformation. Finally, an application to a model of a charged strand evolving in an electric field is given.
\end{abstract}

\noindent \emph{Mathematics Subject Classification} \emph{2020:}
\textbf{70S05}; 70S10, 58D19, 70G45.\\
\noindent \emph{Key Words:} Field theory, symmetries, covariant reduction, Poisson bracket, polysymplectic, multisymplectic, Poisson--Poincar\'e.

\section{Introduction}

Reduction using symmetry has proven to be an advantageous method to understand a myriad of mechanical systems including the notorious rigid body and the movement of particles in a Yang--Mills field \cite{historySym}. In the Hamiltonian picture of Geometric Mechanics, we learnt from the seminal work of Marsden and Weinstein \cite{MWReduction} that the symplectic structure of an autonomous symmetric Hamiltonian system cannot be directly reduced. The reduction is performed in each regular level set of the corresponding momentum map. The same applies to subsequent generalizations to the non-autonomous case, singular problems and reduction by stages (see \cite{RedstagesPoisson,HamiltonianReduction} and references therein). Hence, if one wants to tackle this reduction from a global point of view in the whole manifold, one focuses on the Poisson bracket, since its reduction does not need to introduce momentum maps \cite{PoissonReduction}.

In contrast to symplectic and Poisson reduction, Lagrangian reduction drops variational principles to quotient spaces rather than geometric structures. Euler--Poincar\'e reduction, a reduction where the configuration space is the Lie group of symmetries, is the best known of these procedures (see \cite{MRbook}). This reduction has been largely extended; for instance, Lagrange--Poincar\'e reduction in \cite{BC, CMR} allows for a configuration space distinct to a Lie group. Furthermore, these variational procedures of reduction in Mechanics have been reasonably adapted to Field Theory with the language of fiber bundles and jet spaces. Covariant Lagrangian reduction for principal bundles was studied in \cite{EPFT2001}, and later on, in \cite{EPsubgroup, cov.red} allowing the group of symmetries to be a subgroup of the structure group of the principal bundle. Afterwards, covariant Lagrangian reduction for arbitrary bundles was attained in \cite{EllisLP} establishing Lagrange--Poincar\'e reduction in Field Theory. See also \cite{FTLP} for the definition of a convenient setting adapted to reduction by stages in this field theoretical formulation.

However, despite the efforts made in this direction (for instance, \cite{MultiGroupoids,RemarksMultiSymp, Gunther, Madsen13}), a complete generalization of the Marsden--Weinstein reduction theorem to the diverse geometrical formulations of Hamiltonian classical Field Theory is yet to be accomplished. The interest of this problem exceeds Field Theory and attracts attention in other topics which make use of general polysymplectic and multisymplectic manifolds. For instance, Lie Systems with compatible geometric structures \cite{LieSystemsMulti}. Recent remarkable  advances can be found in \cite{RouthPolySym, RedPolySymp} for the case of Hamiltonian polysymplectic system, and in \cite{RedMultisymp} for the multisymplectic counterpart. Still, this approach requires the introduction of an arbitrary multimomentum map as those studied in \cite{gotay, Madsen12}. Another promising line of research related to multisymplectic spaces is the reinterpretation of the correspondence principle via a Poisson bracket structure on the space of solutions of the equations of motion of first order Hamiltonian field theories as studied in \cite{SolsHamFT1,SolsHamFT2,souriau}.

The purpose of this paper is to propose a reduction scheme for the Poisson covariant formulation of field theoretic Hamiltonian systems developed in \cite{ibort, PoissonForms, CovariantPoisson}. This reduction aims to generalize the Poisson reduction present in Mechanics for these Hamiltonian field theories. As such, no multimomentum map is introduced and the focus is made on reducing the covariant bracket. The particular case of a system in a principal bundle whose structure group is the symmetry group of the system was already worked out in \cite{someremarks}. In this sense, the reduction procedure presented in \cite{someremarks} can be assimilated to Lie-Poisson reduction while the results of this paper generalize Poisson--Poincar\'e reduction to Field Theory. The main results thus provide reduced brackets and equations that are equivalent to the Poisson bracket and Hamilton equations respectively, under the regular action of a Lie group of symmetries on the dual jet and the polysymplectic bundles. It is important to understand the role of connections in this setting. Recall that (see, for example, \cite{ibort, gotay, CovariantPoisson}) that the Hamiltonian density of a Hamiltonian system requires the introduction of a connection. Moreover, reduction procedures themselves usually require a principal connection for the splitting of objects along the orbits of the action of the symmetry group. They both play auxiliary but essential roles. In this article, we will make use of different connections, but the reader has to keep in mind that they depend, in fact, on these connections associated to the Hamiltonian and the reduction.

This paper is structured as follows:
\begin{itemize}
\item Section \ref{preliminaries} overviews multisymplectic and polysymplectic Field Theory as well as the covariant bracket formalism used in \cite{someremarks, CovariantPoisson}. Observe that we present multisymplectic and polysymplectic bundles appearing in Field Theory rather than general polysymplectic and multisymplectic manifolds.
\item Section \ref{SEC: symmetries} studies the reduced space obtained from a symmetric polysymplectic bundle, finds local expression of the projection in adapted coordinates and determines how Poisson $(n-1)$-forms reduce.
\item Section \ref{SEC: Connections} introduces Ehresmann and linear connections that provide a better geometric understanding of results in Sections \ref{SEC:hor bracket} and \ref{SEC:reduction}.
\item Section \ref{SEC:hor bracket} provides a covariant bracket on the reduced polysymplectic space with different contributions from the covariant bracket on polysymplectic spaces and the Lie--Poisson bracket on Lie algebras.
\item Section \ref{SEC:reduction} introduces Poisson--Poincar\'e reduction after proving that the projection of Section \ref{SEC: symmetries} respects the covariant bracket formulation in the polysymplectic bundle and the bracket introduced in Section \ref{SEC:hor bracket}.
\item Finally, Section \ref{SEC: Legendre} links this reduction with the Lagrange--Poincar\'e reduction proposed in \cite{EllisLP} and Section \ref{SEC: example} illustrates these results with a Hamiltonian model for the time evolution of a charged strand moving in an constant electric field.
\end{itemize}
Throughout this paper, the Einstein summation convention is assumed. For technical simplicity, the base manifold $M$ is assumed to be compact. Given a bundle $E\to M$, the projection map is denoted $\pi_{M,E}$ and the space of sections is denoted $\Gamma(E\to M)$ or simply $\Gamma(E)$ if the base space is clear.

\section{Preliminaries} \label{preliminaries}

\subsection*{Fiber Bundles} Given a fiber bundle $\pi_{M,P}:P\rightarrow M$, we define the following equivalence relation: Two local sections $s: U\to P$, $s': U'\to P$ represent the same jet $j^1_xs$ at $x\in U\cap U'\subset M$ if and only if  $s(x)=s'(x)$ and $T_xs=T_xs'$. We denote $J^1_xP$ the space of classes of equivalence, and the total space $J^1P=\bigcup_{x\in M}J^1_xP$ is called the \em first jet bundle \em of $P\rightarrow M$. This bundle plays the same role in Field Theory as the tangent bundle does in Mechanics. The projection $J^1P\to P,\, j^1_xs\mapsto s(x)$ is an affine bundle modeled over the vector bundle $\pi^*_{M,P}T^*M\otimes VP$, where $VP=\mathrm{ker} \left(T\pi_{M,P}\right)\subset TP$ is the \em vertical bundle. \em For a fiber chart $(x^i,y^a)$ on $P$, $(x^i, y^a, y_i^a)$ denotes fiber coordinates on $J^1P$.

An \em Ehresmann connection \em on $P\rightarrow M$ is a distribution $\Lambda\subset TP$ such that for every  $y\in P$, $T_yP=V_yP\oplus\Lambda_y$. That is, the distribution $\Lambda$ determines \em horizontal vectors \em that complement the vertical subspace $VP$. An Ehresmann connection on $P$ can be defined via a section $y\mapsto j^1_xs$ on $J^1P\to P$ setting $\Lambda_y=\mathrm{Im} \left(T_xs\right)$. For any $x\in M$ and $y\in \pi_{M,P}^{-1}(x)$, $\Lambda_y$ is isomorphic to $T_xM$, through $T_y\pi_{M,P}$. The inverse is called \emph{horizontal lift} and is denoted by $\hlift^{y}_{\Lambda}$.

A $G$\em-principal bundle \em is a fiber bundle obtained from the free and proper left action $\Phi:G\times P\to P$ of a Lie group $G$ on a manifold $P$. In that instance, the quotient $\Sigma=P/G$ is also a manifold and we define $\pi_{\Sigma,P} :P\rightarrow \Sigma=P/G$. A $G$-invariant Ehresmann connection, in the sense that the distribution is invariant under $T\Phi_g$ for any $g\in G$, is called a \textit{principal connection}. The horizontal distribution of a principal connection is the kernel of a $1$-form $\mathcal{A}$ on $P$ taking values on $\mathfrak{g}$, the Lie algebra of $G$, such that:
\begin{enumerate}[(i)]
\item $\mathcal{A}(\xi _{y}^{P})=\xi $, for any $\xi \in\mathfrak{g}$, $y\in P$,
\item  $\Phi_{g}^*\mathcal{A}=\mathrm{Ad}_{g}\circ \mathcal{A}$,
\end{enumerate}
where $\mathrm{Ad}$ denotes the adjoint action of $G$ on $\mathfrak{g}$, and $$\xi ^{P}_{y}=\left. \frac{d}{dt}\right\vert _{t=0}\exp (t\xi )\cdot y\in T_yP,$$
is the infinitesimal generator of $\xi\in\mathfrak{g}$ at $y\in P$. The \textit{curvature} of a connection $\mathcal{A}$ is the $\mathfrak{g}$-valued $2$-form $$B(v,w)=d\mathcal{A}\left(\mathrm{Hor}(v),\mathrm{Hor}(w)\right),$$ where $v,w\in T_yP$ and $\mathrm{Hor}(v)$ is the projection of $v\in T_yP$ to the horizontal subbundle $H_{y}P$.

\subsection*{Multisymplectic Formalism}
The \em dual jet bundle \em $J^1P^* $ of a fiber bundle $\pi_{M,P}:P\rightarrow M$ is a vector bundle over $P$ such that for any $y \in P_x,x\in M$, the fiber is $J^1_yP^*=\mathrm{Aff}(J^1_yP,\bigwedge^nT_x^*M)$, the set of affine maps from the jet bundle to the bundle of $n$-forms on $M$, where $n=\mathrm{dim} M$. Provided a fiber chart $(x^i,y^a)$ on $P$, we define fiber coordinates $(x^i, y^a, p^i_a, \tp)$ on $J^1P^*$ so that its elements have the expression:
\begin{equation*}
y_i^a\mapsto (\tp+p^i_ay^a_i)d^nx,
\end{equation*}
where $d^nx=dx^1\wedge\cdots\wedge dx^n$. Observe that the use of affine morphisms implies that $\mathrm{rank}(J^1P^*)=\mathrm{rank}(J^1P)+1$.

An alternative description of $J^1P^*$ is provided by the subbundle $Z$ of $\bigwedge^nT^*P$ of $n$-forms annihilated when contracted with two vertical vectors. Any $n$-form $z\in Z$ can locally be written as $z=\tp d^nx+p^i_ady^a_i\wedge d^{n-1}x_i$, where $d^{n-1}x_i=i_{\partial/\partial x^i}d^nx$. The mapping $\Psi:Z\to J^1 P^*$ defined by $$\langle\Psi(z),j^1_xs\rangle=s^*z\in \bigwedge^nT^*_xM,$$ is a vector bundle isomorphism whose localization in the coordinate systems previously introduced in $Z$ and $J^1P^*$ is the identity. In conclusion, both spaces are canonically isomorphic as vector bundles over $P$.

There is a \em canonical \em $n$-form $\Theta_{\wedge}$ on $\bigwedge^nT^*P$ defined as
\begin{equation*}
\Theta_{\wedge}(z)(u_1,\dots,u_n)=z\left(T\pi_{P,\bigwedge^nT_x^*P}u_1,\dots,T\pi_{P,\bigwedge^nT_x^*P}u_n\right), \quad z\in Z,
\end{equation*}
where $u_i\in T_z(\bigwedge^nT^*P)$ for $i=1,\dots,n$. If $i_{Z,\bigwedge^nT^*P}:Z \to\bigwedge^nT^*P$ denotes the inclusion, the canonical $n$-form $\Theta$ on $Z$ is the pullback $\Theta=i^*_{Z,\bigwedge^nT^*P}\Theta_{\wedge}$ and the \em canonical multisymplectic \em $(n+1)$-form
\begin{equation}
\Omega=-d\Theta.
\end{equation}
The canonical $n$-form $(\Psi^{-1})^*\Theta$ and  $(n+1)$-form $(\Psi^{-1})^*\Omega$ on $J^1P^*$ will again be denoted $\Theta$ and $\Omega$ respectively. The identification of $J^1P^*$ and $Z$ via $\Psi$ justifies this abuse of notation. These forms have the following local expressions:
\begin{equation}
\Theta=p_a^idy^a\wedge d^{n-1}x_i+\tp d^nx,
\end{equation}
\begin{equation}
\Omega=dy^a\wedge dp_a^i \wedge d^{n-1}x_i-d\tp\wedge d^nx.
\end{equation}

The pair $(J^1P^*,\Omega)$ is an example of \em multipsymplectic manifold \em called canonical $n$-plectic structure on $J^1P^*$. A more detailed discussion of the multisyplectic formalism can be found on \cite{gotay} and a theory on the reduction of general multisymplectic manifolds is explained in \cite{RedMultisymp}.

\subsection*{Polysymplectic Formalism}

Given a fiber bundle $P\to M$, the \em polysymplectic bundle \em is the following vector bundle over $M$
\begin{equation*}
\Pi_P=\pi^*_{MP}TM\otimes_PV^*P\otimes_P\pi^*_{MP}\left(\bigwedge^nT^*M\right).
\end{equation*}
Unless the base space is not clear from the context, we shall simply write $\Pi_P=TM\otimes V^*P\otimes\bigwedge^nT^*M$ to avoid tedious notation. For every $p\in \Pi_{P}$, there are local coordinates $(x^i, y^a, p^i_a)$ such that
\begin{equation*}
p=p^i_a\frac{\partial}{\partial x^i}\otimes dy^a \otimes d^nx.
\end{equation*}
Observe that $\mathrm{rank}(\Pi_P)=\mathrm{rank}(J^1P)$ as bundles over $P$. In addition, the fiber map
\begin{align*}
J^1P^*&\to \Pi_P\\
\varphi&\mapsto \vec{\varphi},
\end{align*}
where $\vec{\varphi}\in (\Pi_P)_y=(T_x^*M\otimes V_yP)^*\otimes\bigwedge^nT_x^*M$ is the linear map asociated to the affine map $\varphi\in J^1_yP^*=\mathrm{Aff}(J^1_yP,\bigwedge^nT_x^*M)$, is an affine bundle modeled on the rank-one vector bundle $\bigwedge^nT^*M\to M$.

A \em Hamiltonian system \em is a pair $(\Pi_P,\delta)$ where $\delta$ is a section of $J^1P^*\to \Pi_P$. The canonical forms on $J^1P^*$ can be pulled back to $\Pi_P$ to obtain $\Theta_{\delta}=\delta^*\Theta$ and $\Omega_\delta=-d\Theta_{\delta}$. In local coordinates, $\delta(x^i,y^a, p^i_a)=(x^i, y^a,p^i_a, H_{\delta}(x^i,y^a, p^i_a))$ and
\begin{equation}\label{pullback canform}
\Theta_{\delta}=p_a^idy^a\wedge d^{n-1}x_i+H_{\delta}(x^i,y^a, p^i_a)d^nx.
\end{equation}
A section $p$ of $\Pi_P\to M$ is a \em solution \em of the Hamiltonian system if for any vertical vector field $X$ on $\Pi_P$,
\begin{equation} \label{Hameq}
0=p^*i_X\Omega_\delta=-p^*i_Xd\Theta_\delta.
\end{equation}

Given an Ehresmann connection $\Lambda$ in $P\to M$, there is a natural section $\delta_{\Lambda}$ of $J^1P^*\to\Pi_P$ such that for any $v_x\otimes\omega_y \otimes\vol\in T_xM\otimes V_y^*P\otimes\bigwedge^nT_x^*M$,
\begin{equation*}
\delta_{\Lambda}(v_x\otimes\omega_y \otimes\vol)=\langle \omega_y,\mathrm{Hor}(\cdot)\rangle \wedge i_{v_x}\vol\in Z_y\simeq (J^1P^*)_y.
\end{equation*}
For any section $\delta$ of $J^1P^*\to \Pi_P$, there is a mapping $\mathcal{H}:\Pi_P\to\bigwedge^nT^*M$ called  \em Hamiltonian density \em such that $\mathcal{H}=\delta-\delta_{\Lambda}$. Thus, a Hamiltonian system can equivalently be defined by a triple $(\Pi_P,\Lambda,\mathcal{H})$ where $\Lambda$ is an Ehresmann connection and $\mathcal{H}$ a Hamiltonian density. The Hamilton equation \eqref{Hameq} is locally equivalent to the Hamilton--Cartan equations
\begin{equation} \label{Hamilton-Cartan}
\frac{\partial H}{\partial p^i_a}=\frac{\partial y^a}{\partial x^i}-\Lambda^a_i; \hspace{5mm}
-\frac{\partial H }{\partial y^a}=\frac{\partial p^i_a}{\partial x^i}+\frac{\partial \Lambda^b_i}{\partial y^a}p^i_b,
\end{equation}
where $\Lambda^a_i$ are the coefficients of the horizontal lift defined by $\Lambda$ and $\mathcal{H}=Hd^nx$.

An $r$-form $F$ on $J^1P^*$ is said to be \em horizontal \em if $i_uF=0$ for any vertical tangent vector with respect to $J^1P^*\to M.$ In local coordinates,
$$F=F_{i_1,\dots, i_r}dx^{i_1}\wedge\dots\wedge dx^{i_r}.$$
If there is a vertical $(n-r)$-multivector field $\chi_F$ on $J^1P^*$ such that
\begin{equation}
i_{\chi_F}\Omega =dF,
\end{equation}
$F$ is said to be a \em Poisson form \em \cite{PoissonForms}. Given $F$ a Poisson $r$-form and $E$ a Poisson $s$-form their \em Poisson bracket \em is the $(r+s+1-n)$-form on $J^1P^*$,
\begin{equation}\label{Poisson bracket}
\{F,E\}=(-1)^{r(s-1)}i_{\chi_E}i_{\chi_F}\Omega.
\end{equation}
which, in turn, is a Poisson form. To be exact, this operator is a generalized graded Poisson bracket with a modified Leibniz rule giving rise to a higher-order Gerstenhaber algebra. More details can be found in \cite{PoissonForms}. It has been proved in \cite{someremarks} that a Poisson form on $J^1P^*$ projects to $\Pi_P$ so that we can consider that Poisson forms are defined on $\Pi_P$. Poisson $(n-1)$-forms are key to express the Hamilton-Cartan equations \eqref{Hamilton-Cartan} in terms of a bracket in the same way affine functions in a cotangent bundle play a fundamental role in Hamiltonian Mechanics:
\begin{theorem}\emph{\cite[Proposition 5.2]{someremarks}} \label{TH bracket formulation}
A section $p$ of $\Pi_P\to M$ is a solution of a given Hamiltonian system $(\Pi_P,\Lambda,\mathcal{H})$, $\mathcal{H}=H\vol$ if and only if for any horizontal Poisson $(n-1)$-form $F$ the following equation holds true:
\begin{equation} \label{Hameqs with bracket}
\{F,H\}\vol\circ p=d(p^*F)-(d^hF)\circ p,
\end{equation}
where $d^hF$ is the horizontal differential of $F$ with respect to the connection on $\Pi_P$.
\end{theorem}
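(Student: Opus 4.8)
The plan is to reduce both sides of \eqref{Hameqs with bracket} to explicit coordinate expressions and to show that their equality, as $F$ ranges over all horizontal Poisson $(n-1)$-forms, is equivalent to the Hamilton--Cartan equations \eqref{Hamilton-Cartan}, which the excerpt already identifies with $p$ being a solution. First I would fix an adapted fiber chart $(x^i,y^a,p^i_a)$ and record that $\Omega_\delta=\delta^*\Omega$, so that the solution condition $p^*i_X\Omega_\delta=0$ for every vertical $X$, see \eqref{Hameq}, unpacks into \eqref{Hamilton-Cartan}.

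Second, I would describe a general horizontal Poisson $(n-1)$-form as $F=F^i\,d^{n-1}x_i$ with $F^i=F^i(x,y,p)$, and solve $i_{\chi_F}\Omega=dF$ for its vertical vector field $\chi_F=\chi^a\partial_{y^a}+\chi^i_a\partial_{p^i_a}+\tilde\chi\,\partial_{\tp}$. Matching the $dy^a\wedge d^{n-1}x_i$, $dp^j_a\wedge d^{n-1}x_i$ and $d^nx$ components of $\Omega=dy^a\wedge dp^i_a\wedge d^{n-1}x_i-d\tp\wedge d^nx$ yields $\chi^i_a=-\partial F^i/\partial y^a$, $\tilde\chi=-\partial_iF^i$, and the Poisson-form constraint $\partial F^i/\partial p^j_a=\delta^i_j\chi^a$, so that $\chi^a=\partial F^i/\partial p^i_a$ is well defined, independent of $i$. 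From \eqref{Poisson bracket} the bracket with the Hamiltonian collapses, up to the graded sign, to the derivative of $H$ along $\chi_F$, namely $\{F,H\}=-\chi_F(H)=\frac{\partial F^i}{\partial y^a}\frac{\partial H}{\partial p^i_a}-\chi^a\frac{\partial H}{\partial y^a}$; here it is essential that only the vertical derivatives of $H$ enter, so the computation does not require $H$ to be a Poisson $0$-form in its own right.

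Third, I would expand the right-hand side. Along $p$ one has $d(p^*F)=\big(\partial_iF^i+\tfrac{\partial F^i}{\partial y^a}\partial_i y^a+\chi^a\partial_i p^i_a\big)\vol$, using the constraint to rewrite $\frac{\partial F^i}{\partial p^j_a}\partial_i p^j_a=\chi^a\partial_i p^i_a$; and $(d^hF)\circ p=\big(\partial_iF^i+\tfrac{\partial F^i}{\partial y^a}\Lambda^a_i+\chi^a C^i_{a,i}\big)\vol$, where $C^i_{a,j}$ are the coefficients of the horizontal lift on $\Pi_P$ induced by $\Lambda$. Subtracting, the $\partial_iF^i$ terms cancel and $d(p^*F)-(d^hF)\circ p=\big[\tfrac{\partial F^i}{\partial y^a}(\partial_iy^a-\Lambda^a_i)+\chi^a(\partial_ip^i_a-C^i_{a,i})\big]\vol$. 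Comparing with $\{F,H\}\vol$ term by term, the coefficient of $\partial F^i/\partial y^a$ reproduces the first equation of \eqref{Hamilton-Cartan}, while the coefficient of $\chi^a$ reproduces the second precisely when $C^i_{a,i}=-\frac{\partial\Lambda^b_i}{\partial y^a}p^i_b$.

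The main obstacle, and the real geometric content, is this last identification: I must check that the connection on $\Pi_P$ induced by $\Lambda$, through the vertical dual bundle $V^*P$ and the tensor structure $TM\otimes V^*P\otimes\bigwedge^nT^*M$, has exactly the horizontal-lift coefficients $C^i_{a,j}=-\frac{\partial\Lambda^b_j}{\partial y^a}p^i_b$, so that the horizontal differential $d^h$ absorbs the connection-dependent term $\frac{\partial\Lambda^b_i}{\partial y^a}p^i_b$ of \eqref{Hamilton-Cartan}; pinning down this coefficient and its sign is where the role of the connection is decisive. A secondary point is the converse direction: I would argue that the forms $F=f(x,y)\,d^{n-1}x_i$, for which $\chi^a=0$, isolate the first Hamilton--Cartan equation and force $\partial_iy^a-\Lambda^a_i=\partial H/\partial p^i_a$, whereas forms such as $F=c_a(x,y)p^i_a\,d^{n-1}x_i$ produce arbitrary $\chi^a$ and isolate the second, so that validity of \eqref{Hameqs with bracket} for all horizontal Poisson $(n-1)$-forms is genuinely equivalent to the full system, not merely implied by it. Finally I would keep careful track of the graded sign in \eqref{Poisson bracket} and of the multivector contraction order, since these fix the overall sign with which $\{F,H\}$ matches the covariant derivative.
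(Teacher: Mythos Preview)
The paper does not itself prove this theorem: it is quoted verbatim from \cite[Proposition~5.2]{someremarks} in the preliminaries and used thereafter as a known fact. So there is no in-paper proof to compare against; one can only assess your argument on its own terms.

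Your argument is correct and is the natural local-coordinate verification. Two small points are worth making explicit. First, the horizontal-lift coefficients on $\Pi_P$ are, in full, those of \eqref{conn.poly}, namely $C^j_{a,i}=-\tfrac{\partial\Lambda^b_i}{\partial y^a}p^j_b+\Gamma^j_{ik}p^k_a-\Gamma^k_{ik}p^j_a$; your key claim $C^i_{a,i}=-\tfrac{\partial\Lambda^b_i}{\partial y^a}p^i_b$ is only the \emph{contracted} identity, and it holds because the Riemannian connection is torsion-free so that, upon summing $i$, the two Christoffel terms cancel: $\Gamma^i_{ik}p^k_a-\Gamma^k_{ik}p^i_a=0$. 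This is the ``main obstacle'' you flagged, and it is precisely this symmetry that makes $d^hF$ absorb the $\frac{\partial\Lambda^b_i}{\partial y^a}p^i_b$ term of \eqref{Hamilton-Cartan}. Second, your bracket formula $\{F,H\}=\tfrac{\partial F^i}{\partial y^a}\tfrac{\partial H}{\partial p^i_a}-\chi^a\tfrac{\partial H}{\partial y^a}$ agrees with the local expression the paper uses later (see the opening of the proof of Theorem~\ref{kappa.is.poisson}), so the sign bookkeeping you worry about lands correctly.

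Your converse argument is also fine: the forms $F=f(x,y)\,d^{n-1}x_i$ are Poisson with $\chi^a=0$ and isolate the first Hamilton--Cartan equation for each fixed $i$ (varying $\partial f/\partial y^a$ pointwise), while $F=c_a(x,y)\,p^i_a\,d^{n-1}x_i$ satisfies the Poisson constraint $\partial F^i/\partial p^j_a=\delta^i_j c_a$ with $\chi^a=c_a$ arbitrary, isolating the second. This matches Proposition~\ref{n-1.formas unred}, which decomposes any Poisson $(n-1)$-form as $\theta_X+\pi^*\omega+\Upsilon$; your two families are exactly the $\theta_X$ and $\pi^*\omega$ pieces, and closed $\Upsilon$ contributes trivially to both sides of \eqref{Hameqs with bracket}.
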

The aforementioned connection on $\Pi_P\to M$ is obtained from the Ehresmann connection $\Lambda$ on $P\to M$ and any Riemannian connection on $M$. The local expression of the horizontal lift of this connection is
\begin{align} \label{conn.poly}
\frac{\partial}{\partial x^i}\mapsto\frac{\partial}{\partial x^i}+\Lambda^a_i\frac{\partial}{\partial y^a}+\left(-\frac{\partial \Lambda^b_i}{\partial y^a}p^j_b+\Gamma^j_{ik}p^k_a-\Gamma^k_{ik}p^j_a\right)\frac{\partial}{\partial p^j_a},
\end{align}
where $\Gamma^k_{ij}$ are the Christoffel symbols of the Riemannian connection. As stated before, the main objective of this article is to present a reduction theory for this Poisson covariant formulation of the Hamilton equations.

\section{The Reduced Polysymplectic Bundle} \label{SEC: symmetries}

Given a fiber bundle $P\to M$ and a Lie group $G$ acting freely, properly and vertically on $P$, we have $P\to P/G=\Sigma\to M$, where $P\to\Sigma$ is a principal bundle and $\Sigma\to M$ a fiber bundle. Let $\A$ be a principal connection on $P\to\Sigma$, the map
\begin{align*}
\alpha_{\A}: (TP)/G\to &T\Sigma\oplus \g \\
[\dot{y}]_G \mapsto &T\pi_{\Sigma,P}(\dot{y})\oplus [y, \A(\dot{y})]_G,
\end{align*}
where $\g= (P\times\mathfrak{g})/G$ is the adjoint bundle obtained as an associated bundle from the adjoint action of $G$ on $\mathfrak{g}$, is an isomorphism of vector bundles over $\Sigma$ whose inverse is
\begin{align*}
\alpha^{-1}_{\A}:T\Sigma\oplus\g\to & (TP)/G \\
\dot{s}\oplus [y,\xi]_G \mapsto& [\hlift_{\A}(\dot{s})+ \xi^P_y]_G.
\end{align*}
Here $\hlift_{\A}:\pi^*_{\Sigma,P}T\Sigma\to TP$ denotes the horizontal lift induced by the connection $\A$. Observe that $\alpha_{\A}$ is a diffeomorphism of bundles over $M$. The dual morphism of $\alpha^{-1}_{\A}$, $\tau_{\A}=(\alpha^{-1}_{\A})^*$, similarly splits $(T^*P)/G$ in a vertical and horizontal part with respect to $\A$. It follows that
$$\tau_{\A}=[\hlift^*_{\A}]_G\oplus[\ubar{J}]_G:(T^*P)/G \to  T^*\Sigma\oplus\g^*$$
where $\hlift^*_{\A}:T^*P\to \pi^*_{\Sigma,P}T^*\Sigma$ is the dual morphism of the horizontal lift $\hlift_{\A}$ and $[\ubar{J}]_G:(T^*P)/G\to \g^*$ is the function induced by the equivariant momentum map of the action on $T^*P$
\begin{align*}
\ubar{J}:T^*P\to &\mathfrak{g}^* \\
\gamma_y\mapsto &\ubar{J}(\gamma_y)(\xi)=\gamma_y(\xi^P_y).
\end{align*}
The map $\tau_{\A}$ is an isomorphism of vector bundles over $\Sigma$ whose inverse is:
\begin{align*}
\tau^{-1}_{\A}:T^*\Sigma\oplus\g^*\to & (T^*P)/G \\
\beta_s\oplus\eta_s \mapsto& \left[\pi^*_{\Sigma,P}\beta_s+\eta_s\circ [p,\A(\cdot)]\right]_G.
\end{align*}

There are two different notions of vertical subspace of $TP$ depending on which base space is considered. On one hand, $V_{\pi_{M,P}}P=\{v\in TP,\, T\pi_{M,P}(v)=0\}$ are the vertical vectors with respect to the fiber bundle $\pi_{M,P}:P\to M$. On the other hand, $V_{\pi_{\Sigma,P}}P=\{v\in TP,\, T\pi_{\Sigma,P}(v)=0\}$, where vertical refers to the bundle $\pi_{\Sigma,P}:P\to \Sigma$. Hereafter, the notation $VP$ will exclusively refer to $V_{\pi_{M,P}}P$.

\begin{proposition} \label{restr} The restricted map $\alpha_{\A}\vert_{VP/G}: (VP)/G\to V\Sigma\oplus \g $ is an isomorphism of vector bundles over  $\Sigma$ and a diffeomorphism of bundles over $M$, whose inverse is given by $\alpha_{\A}^{-1}\vert_{V\Sigma\oplus\g}$.
\end{proposition}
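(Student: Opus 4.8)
The plan is to exploit the fact that $\alpha_{\A}$ has already been established to be a vector bundle isomorphism over $\Sigma$ and a diffeomorphism of bundles over $M$. Consequently, it suffices to verify that both $\alpha_{\A}$ and its inverse $\alpha_{\A}^{-1}$ carry the relevant vertical subbundles into one another; the restrictions are then automatically mutual inverses and inherit the isomorphism and diffeomorphism properties. The essential ingredient throughout is the factorization $\pi_{M,P}=\pi_{M,\Sigma}\circ\pi_{\Sigma,P}$, together with the hypothesis that $G$ acts vertically with respect to $\pi_{M,P}$.

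First I would verify that $\alpha_{\A}$ maps $(VP)/G$ into $V\Sigma\oplus\g$. Given $\dot y\in V_yP$, differentiating the factorization gives $T\pi_{M,\Sigma}\bigl(T\pi_{\Sigma,P}(\dot y)\bigr)=T\pi_{M,P}(\dot y)=0$, so the first component $T\pi_{\Sigma,P}(\dot y)$ lands in $V\Sigma$, while the second component $[y,\A(\dot y)]_G$ lies in $\g$ by construction. Hence the image is contained in $V\Sigma\oplus\g$.

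Next I would check the reverse inclusion for $\alpha_{\A}^{-1}$. Taking $\dot s\oplus[y,\xi]_G$ with $\dot s\in V_s\Sigma$, its image is $[\hlift_{\A}(\dot s)+\xi^P_y]_G$, and I must show the representative is $\pi_{M,P}$-vertical. For the horizontal lift, the same chain rule yields $T\pi_{M,P}(\hlift_{\A}(\dot s))=T\pi_{M,\Sigma}(\dot s)=0$, since $\hlift_{\A}$ is a section of $T\pi_{\Sigma,P}$ and $\dot s$ is $\pi_{M,\Sigma}$-vertical. For the infinitesimal generator, verticality of the $G$-action means each orbit sits inside a single $\pi_{M,P}$-fiber, so $\xi^P_y\in VP$. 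Thus the sum lies in $VP$ and $\alpha_{\A}^{-1}(V\Sigma\oplus\g)\subseteq (VP)/G$.

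Finally, since $\alpha_{\A}\circ\alpha_{\A}^{-1}=\id$ and $\alpha_{\A}^{-1}\circ\alpha_{\A}=\id$ hold on the full bundles, restricting these identities to the subbundles just checked shows that $\alpha_{\A}\vert_{VP/G}$ and $\alpha_{\A}^{-1}\vert_{V\Sigma\oplus\g}$ are mutual inverses; being restrictions of fiberwise-linear maps over $\Sigma$ and of a diffeomorphism over $M$, they inherit these structures on the subbundles. The only genuine obstacle is the careful bookkeeping of the two distinct verticality notions $V_{\pi_{M,P}}P$ and $V_{\pi_{\Sigma,P}}P$, and pinning down precisely where the hypothesis that the action is vertical enters, namely to guarantee $\xi^P_y\in VP$. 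A rank count, $\mathrm{rank}\,VP=\dim G+\mathrm{rank}\,V\Sigma=\mathrm{rank}(V\Sigma\oplus\g)$, serves as a consistency check that no dimension has been lost in the restriction.
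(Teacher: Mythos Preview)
Your proposal is correct and follows essentially the same approach as the paper: both use the factorization $\pi_{M,P}=\pi_{M,\Sigma}\circ\pi_{\Sigma,P}$ to check that $\alpha_{\A}$ and $\alpha_{\A}^{-1}$ preserve the respective vertical subbundles, then restrict the global inverse relation. Your write-up is in fact slightly more explicit than the paper's in one respect---you spell out that $\xi^P_y\in VP$ follows from the verticality of the action (the paper leaves this implicit when concluding $\mathrm{Im}(\alpha_{\A}^{-1}\vert_{V\Sigma\oplus\g})\subseteq VP/G$)---and your rank count is a useful additional sanity check not present in the original.
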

\begin{proof}
For every $v_y\in T_yP$, $$T_y\pi_{M,P}(v_y)=T_s\pi_{M,\Sigma}\circ T_y\pi_{\Sigma,P}(v_y),$$ and $v_y\in V_yP$ if and only if $T\pi_{\Sigma,P}(v_y)\in V_s\Sigma$. Consequently, $\mathrm{Im}\left(\alpha_{\A}\vert_{VP/G}\right)\subseteq V\Sigma\oplus \g $.
For every $v_s\in T_s\Sigma$,
$$T_s\pi_{M,\Sigma}(v_s)=T_s\pi_{M,\Sigma}\circ T_y\pi_{\Sigma,P}\circ \hlift^y_{\A}(v_s)=T_y\pi_{M,P}\circ \hlift^y_{\A}(v_s).$$
Therefore, $v_s\in V_s\Sigma$ if and only if $\hlift^y_{\A}(v_s)\in V_yP$, $\restr{\hlift_{\A}}{V\Sigma}:V\Sigma\to VP$ and $\mathrm{Im}(\alpha_{\A}^{-1}\vert_{V\Sigma\oplus \g })\subseteq VP/G$.
As $\alpha_{\A}^{-1}\circ \alpha_{\A}= \mathrm{Id}_{TP/G}$, we observe that
$$\mathrm{Id}_{VP/G}= (\alpha_{\A}^{-1}\circ \alpha_{\A})\vert_{VP/G}= \alpha_{\A}^{-1}\vert_{V\Sigma\oplus\g}\circ \alpha_{\A}\vert_{VP/G}$$
and conclude that both maps are in fact isomorphisms.
\end{proof}

We shall denote  $\alpha_{\A}: (VP)/G\to V\Sigma\oplus \g $ and $\alpha_{\A}^{-1}:V\Sigma\oplus \g \to (VP)/G$ the restricted maps introduced in Proposition \ref{restr}. Observe that this is an abuse of notation since we already established that notation for the unrestricted maps. Correspondingly, we shall denote
$$\tau^{-1}_{\A}:V^*\Sigma\oplus\g^*\to  (V^*P)/G, \qquad \tau_{\A}:(V^*P)/G \to  V^*\Sigma\oplus\g^*,$$
their respective dual maps. These splittings induce a similar one in the polysymplectic bundle.
\begin{proposition} \label{quotient.ident}
Given a principal connection $\A$ on the fiber bundle $P\rightarrow\Sigma$, the map
\begin{align*}
\psi_{\A}=\id_{TM}\otimes\tau_{\A}\otimes\id_{\bigwedge^{n}T^*M}:\Pi_{P}/G  & \longrightarrow\Pi_{\Sigma}\oplus (TM\otimes\mathfrak{\tilde{g}%
}^*\otimes\Lambda^{n}T^*M)
\end{align*}
is an isomorphism of vector bundles over $\Sigma$.
\end{proposition}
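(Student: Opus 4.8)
The plan is to reduce the statement to the isomorphism $\tau_{\A}\colon (V^*P)/G\to V^*\Sigma\oplus\g^*$ already obtained from Proposition \ref{restr}, by exploiting that the two outer tensor factors $TM$ and $\bigwedge^{n}T^*M$ of $\Pi_P$ are pulled back from the base $M$, which the vertical $G$-action leaves untouched. Once this is set up, $\psi_{\A}$ is literally the operator that tensors $\tau_{\A}$ with identities, and tensoring an isomorphism with identities on fixed bundles yields an isomorphism.

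First I would make the identification of the source precise. Since $G$ acts on $P$ preserving the fibres of $\pi_{M,P}$, the induced action on the pulled-back factors $\pi^*_{M,P}TM$ and $\pi^*_{M,P}(\bigwedge^{n}T^*M)$ is trivial on fibres: an element $g$ only moves the base point $y\in P$ to $g\cdot y$, with $\pi_{M,P}(g\cdot y)=\pi_{M,P}(y)$, while acting as the identity on the fibres $T_{\pi_{M,P}(y)}M$ and $\bigwedge^{n}T^*_{\pi_{M,P}(y)}M$. Consequently the quotient commutes with tensoring by these pulled-back bundles, and $\pi_{M,P}$ descends to $\pi_{M,\Sigma}\colon\Sigma\to M$, yielding an identification of vector bundles over $\Sigma$,
$$\Pi_P/G \;\cong\; \pi^*_{M,\Sigma}TM\otimes_\Sigma\big((V^*P)/G\big)\otimes_\Sigma\pi^*_{M,\Sigma}\big(\textstyle\bigwedge^{n}T^*M\big).$$
This quotient--tensor interchange is the only non-formal point of the argument, and it is exactly where the verticality of the action is essential; everything else is bookkeeping.

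Next I would rewrite the target. By distributivity of the tensor product over direct sums, with $TM$ and $\bigwedge^{n}T^*M$ as common factors,
$$\Pi_\Sigma\oplus\big(TM\otimes\g^*\otimes\textstyle\bigwedge^{n}T^*M\big)\;\cong\; TM\otimes\big(V^*\Sigma\oplus\g^*\big)\otimes\textstyle\bigwedge^{n}T^*M.$$
Under the two identifications just made, the map $\psi_{\A}=\id_{TM}\otimes\tau_{\A}\otimes\id_{\bigwedge^{n}T^*M}$ becomes, by construction, the operator that tensors the restricted dual map $\tau_{\A}\colon(V^*P)/G\to V^*\Sigma\oplus\g^*$ with the identities on the remaining factors.

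It then remains only to invoke that $\tau_{\A}$ is itself a vector bundle isomorphism over $\Sigma$. By Proposition \ref{restr} the restricted map $\alpha_{\A}\colon (VP)/G\to V\Sigma\oplus\g$ is an isomorphism; hence so is $\alpha_{\A}^{-1}$, and so is its dual $\tau_{\A}=(\alpha^{-1}_{\A})^*$, with inverse $\tau_{\A}^{-1}=(\alpha_{\A})^*$. Working fibrewise over each $s\in\Sigma$, the linear map $(\psi_{\A})_s=\id\otimes(\tau_{\A})_s\otimes\id$ is an isomorphism because $(\tau_{\A})_s$ is, and a global two-sided inverse is given by $\id_{TM}\otimes\tau_{\A}^{-1}\otimes\id_{\bigwedge^{n}T^*M}$, which is smooth since $\tau_{\A}^{-1}$ is. This shows $\psi_{\A}$ is an isomorphism of vector bundles over $\Sigma$. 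As a sanity check one may compare ranks: $\mathrm{rank}(V^*P)=\mathrm{rank}(V^*\Sigma)+\dim\mathfrak g$ because the fibre of $P\to\Sigma$ is $G$, and tensoring by $TM$ and the line bundle $\bigwedge^{n}T^*M$ preserves this equality. The anticipated main obstacle is solely the first step's quotient--tensor interchange; the concluding isomorphism argument is then immediate.
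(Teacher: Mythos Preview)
Your proof is correct. The paper does not give a proof of this proposition at all; it is stated and immediately followed by the definition of $\kappa$, the authors evidently regarding it as an immediate consequence of the restricted isomorphism $\tau_{\A}\colon (V^*P)/G\to V^*\Sigma\oplus\g^*$ established right before. Your argument supplies exactly the details the paper suppresses: the quotient--tensor interchange (valid because the $G$-action is vertical and hence trivial on the pulled-back factors $\pi^*_{M,P}TM$ and $\pi^*_{M,P}\bigwedge^{n}T^*M$), the distributivity identification of the target, and the observation that tensoring an isomorphism with identities yields an isomorphism. There is nothing to compare methodologically---you have simply written out what the paper leaves implicit.
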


We shall define $\kappa= \psi_{\A}\circ\pi_{\Pi_P/G,\Pi_P}$, the projection of $\Pi_P\to \Pi_P/G$ using the identification $\psi_{\A}$. In the following, we obtain a local expression for $\kappa$ and $T\kappa$ in an adapted coordinate system.

Suppose that the principal bundle $P\to \Sigma$ is trivializable and $P\simeq \Sigma\times G$. Let $\Sigma\to M$ be in turn trivializable and $\Sigma\simeq M\times F$, where $F$ is diffeomorphic to the fibers. For any point $y\in P$, we will respectively denote its projection to $\Sigma$ and $M$ by $s=\pi_{P,\Sigma}(y)$  and $x=\pi_{M,P}(y)$. A trivialization such that $y=(s,e)$ can always be chosen. In coordinates we can express $s=(x^i,x^a)$ where $i=1,\dots,n=\dim M $, $k=\dim G$, $m=\dim P > n+k$,  and $a=1,\dots,m-k-n$. Also, $x=\pi_{M,P}(y)=\pi_{\Sigma,P}(s)$ has coordinates $(x^1, \dots, x^i, \dots, x^n)$. Fixing a basis $\{B_1,\dots,B_k\}$ of $\mathfrak{g}$ we can choose a normal coordinate system in a neighborhood of $(s,e)$ such that $\mathrm{v}=dx^1\wedge\dots\wedge dx^n$ is a volume form and the coordinates of any point $(s,g)$ in such neighborhood are $(x^i,x^a,y^{\alpha})$ where $y^\alpha(g)$, for $\alpha=1,\dots,k$ are given by
$$g=\exp(y^{\alpha}(g)B_{\alpha}).$$
The induced coordinate system on $\Pi_P$ has coordinates $(x^i,x^a,y^{\alpha},p^i_a,p^i_{\alpha})$. Similarly, since the coordinates in $\Sigma$ are given by $(x^i,x^a)$, the induced coordinate system on $\Pi_{\Sigma}$ is $(x^i,x^a,\sigma^i_a)$. The trivialization around $y=(s,e)$ defines local sections $\tilde{B}_{\alpha}$ in $\g\to\Sigma$ given by $s\mapsto [(s,e),B_{\alpha}]_G$ as well as local sections  $\tilde{B}^{\alpha}$ in $\g^*\to\Sigma$ and $\tilde{B}^{\alpha}_i=\frac{\partial}{\partial x^i}\otimes\tilde{B}^{\alpha}\otimes\mathrm{v}$ in $TM\otimes\mathfrak{\tilde{g}%
}^*\otimes\bigwedge^{n}T^*M$. The later set of local sections induces a local coordinate system ($x^i,x^a,\mu^i_{\alpha})$ on $TM\otimes\mathfrak{\tilde{g}}^*\otimes\bigwedge^{n}T^*M$. We will not restrict the results on this paper to trivializable bundles, yet, since bundles are locally trivializable, we can define such sets of coordinates around any $y\in P$ and use them to prove local formulas.

In particular, the horizontal lift at $y$ through $\A$, $\hlift^y_{\A}$, has a local expression given by
\begin{align*}
\hlift_{\A}^y:\pi_{\Sigma,P}^*T_s\Sigma\to & T_yP\\
\frac{\partial}{\partial x^i} \mapsto &\frac{\partial}{\partial x^i}+A^{\alpha}_i(y)\frac{\partial}{\partial y^{\alpha}}\\
\frac{\partial}{\partial x^a} \mapsto &\frac{\partial}{\partial x^a}+A^{\alpha}_a(y)\frac{\partial}{\partial y^{\alpha}},
\end{align*}
and we have the following result.
\begin{proposition} \label{localkappa}
The local expression of the projection $\kappa:\Pi_P \to\Pi_{\Sigma}\oplus (TM\otimes\mathfrak{\tilde{g}%
}^*\otimes\bigwedge^{n}T^*M)$ is:
\begin{equation*}
\kappa(x^i,x^a,y^{\alpha},p^i_a,p^i_{\alpha})=(x^i,x^a,\sigma^i_a=p^i_a+p^i_{\beta}A^{\beta}_a(x^i,x^a,y^{\lambda}),\mu^i_{\alpha}=Z^{\beta}_{\alpha}(g^{-1})p^i_{\beta}),
\end{equation*}
where $Z^{\beta}_{\alpha}(g)$ is such that
$$T_gL_{g^{-1}}\left(\frac{\partial}{\partial y^{\alpha}}\right)_g=Z^{\beta}_{\alpha}(g)\left(\frac{\partial}{\partial y^{\beta}}\right)_e. $$
In particular, for $y=(s,e)$; $\sigma^i_a=p^i_a+p^i_{\alpha}A^{\alpha}_a$ and $\mu^i_{\alpha}=p^i_{\alpha}$.
\end{proposition}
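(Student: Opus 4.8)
The plan is to reduce the whole statement to a fibrewise computation of $\tau_{\A}$ on vertical covectors. Since $\kappa=\psi_{\A}\circ\pi_{\Pi_P/G,\Pi_P}$ and, by Proposition \ref{quotient.ident}, $\psi_{\A}=\id_{TM}\otimes\tau_{\A}\otimes\id_{\bigwedge^{n}T^*M}$, the factors $\partial/\partial x^i$ and $\vol=d^nx$ are carried through untouched, so that it suffices to compute the image under $\tau_{\A}$ of the vertical covector $\omega^{(i)}=p^i_a\,dx^a+p^i_\alpha\,dy^\alpha\in V^*_yP$ attached to each fixed index $i$. Writing the restricted map as $\tau_{\A}=[\hlift^*_{\A}]_G\oplus[\ubar{J}]_G$ introduced after Proposition \ref{restr}, the $V^*\Sigma$-summand of $\tau_{\A}(\omega^{(i)})$ produces the coefficients $\sigma^i_a$, while the $\g^*$-summand produces the coefficients $\mu^i_\alpha$; I would treat the two summands separately.

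For the $\sigma$-part I would simply pair $\omega^{(i)}$ with the horizontal lift. By Proposition \ref{restr}, $\hlift^y_{\A}$ sends $V\Sigma$ into $VP$, so that $\hlift^y_{\A}(\partial/\partial x^a)=\partial/\partial x^a+A^\alpha_a(y)\,\partial/\partial y^\alpha$ is a genuine vertical vector, and by definition of the dual map $[\hlift^*_{\A}]_G$ the $V^*\Sigma$-component of $\tau_{\A}(\omega^{(i)})$ evaluated on $\partial/\partial x^a$ equals $\langle\omega^{(i)},\hlift^y_{\A}(\partial/\partial x^a)\rangle=p^i_a+p^i_\beta A^\beta_a(y)$. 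Reinstating the $\partial/\partial x^i\otimes(\,\cdot\,)\otimes\vol$ wrapper gives $\sigma^i_a=p^i_a+p^i_\beta A^\beta_a(x^i,x^a,y^\lambda)$. This is the routine half.

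For the $\mu$-part I would use the momentum map together with the frame $\tilde{B}^\alpha$. By definition $\ubar{J}(\omega^{(i)})(\xi)=\langle\omega^{(i)},\xi^P_y\rangle$, so that $[\ubar{J}]_G(\omega^{(i)})=[y,\ubar{J}(\omega^{(i)})]_G\in\g^*$, and the $\mu^i_\alpha$ are the components of this element in the frame $\tilde{B}^\alpha$, which is anchored at $(s,e)$ rather than at $y=(s,g)$. Hence the computation forces a comparison between the coordinate vectors $(\partial/\partial y^\alpha)_g$, through which $\omega^{(i)}$ is expressed, and the basis $B_\alpha=(\partial/\partial y^\alpha)_e$ of $\mathfrak{g}$ used to build $\tilde{B}^\alpha$. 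This comparison is exactly the content of the matrix $Z(g^{-1})$: combining the equivariance of $\ubar{J}$, which contributes the adjoint transport of $[y,\,\cdot\,]_G$ back to the reference point $(s,e)$, with the differential of left translation relating the two frames at $g$ and at $e$, the net change of basis acting on the $p^i_\beta$ is $Z^\beta_\alpha(g^{-1})$, yielding $\mu^i_\alpha=Z^\beta_\alpha(g^{-1})p^i_\beta$. Evaluating at $y=(s,e)$, where $Z(e)=\mathrm{Id}$, recovers $\mu^i_\alpha=p^i_\alpha$ and $\sigma^i_a=p^i_a+p^i_\alpha A^\alpha_a$, the announced special case.

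The main obstacle is precisely this last piece of bookkeeping. For a left action the infinitesimal generator $\xi^P_y$ is a right-invariant vector field, the adjoint-bundle equivalence relation introduces an adjoint factor $\Ad_{g^{\pm 1}}$ when transporting $[y,\,\cdot\,]_G$ to the reference point $(s,e)$, and the passage between the coordinate frame $(\partial/\partial y^\alpha)_g$ and $\mathfrak{g}$ is governed by a Maurer--Cartan matrix. One must verify that these three contributions assemble into $Z$ evaluated at $g^{-1}$, and not, say, at $g$ nor into an inverse; this is a delicate matching of left/right conventions. The identity $Z(e)=\mathrm{Id}$ together with the $g=e$ specialization provides the consistency check that the signs have been taken correctly.
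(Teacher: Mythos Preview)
Your approach is correct but differs from the paper's. You compute $\tau_{\A}=[\hlift^*_{\A}]_G\oplus[\ubar{J}]_G$ directly on the covector $\omega^{(i)}$; the paper instead computes the \emph{inverse} map $\tau_{\A}^{-1}$ on the basis $dx^a,\tilde{B}^{\alpha}$ using its explicit formula $\tau_{\A}^{-1}(\beta_s\oplus\eta_s)=[\pi^*_{\Sigma,P}\beta_s+\eta_s\circ[y,\A(\cdot)]]_G$, obtains $\tau_{\A}^{-1}(\tilde{B}^{\alpha})=-A^{\alpha}_a(s,e)\,dx^a+Z^{\alpha}_{\beta}(g)\,dy^{\beta}$, and then inverts this linear relation to read off $\tau_{\A}(dy^{\beta})$.

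The practical difference is exactly the point you flag as the main obstacle. In your route the $\mu$-part forces you to assemble three pieces --- the right-invariant generator $\xi^P_y$, the $\Ad$-transport hidden in $[y,\cdot]_G$, and the Maurer--Cartan change of frame --- and check they combine to $Z(g^{-1})$ rather than some other power. In the paper's route the $Z$-matrix appears in one stroke from the single identity $[(s,g),\A(s,g)(v)]_G=[(s,e),\A(s,e)\circ T_gL_{g^{-1}}(v)]_G$ (equivariance of $\A$ plus the adjoint-bundle relation cancel), so the delicate sign-matching never arises; the only price is a matrix inversion at the end. Your approach is more transparent about which geometric datum (horizontal lift versus momentum map) produces which reduced coordinate, while the paper's avoids the bookkeeping you correctly identify as the hazard. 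Either is a complete proof once the $\mu$-step is written out; if you want to shortcut that step, the paper's equivariance trick is the cleanest way.
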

\begin{proof}
First we obtain the local expression of $\tau^{-1}_{\A}:T^*\Sigma\oplus\g^*\to (T^*P)/G$. From its definition
$$\tau^{-1}_{\A}(dx^a)=\pi^*_{\Sigma,P}dx^a=dx^a.$$
Let $y=(s,g)$ for every $v\in T_yP$,
\begin{align*}
\tau^{-1}_{\A}(\tilde{B}^{\alpha})([v]_G)&=\langle\tilde{B}^{\alpha},[(s,g),\mathcal{A}(s,g)(v)]_G\rangle =\langle\tilde{B}^{\alpha},[(s,e),\Ad_{g^{-1}}\circ\mathcal{A}(s,e)(v)]_G\rangle\\&=\langle\tilde{B}^{\alpha},[(s,e),\mathcal{A}(s,e)\circ T_gL_{g^{-1}}(v)]_G\rangle.
\end{align*}
Since
\begin{align*}
\mathcal{A}(s,e)\circ T_gL_{g^{-1}}\left(\frac{\partial}{\partial x^a}\right)_g&=\mathcal{A}(s,e)\left(\frac{\partial}{\partial x^a}\right)_e=-A^{\beta}_a(s,e),\\
\mathcal{A}(s,e)\circ T_gL_{g^{-1}}\left(\frac{\partial}{\partial y^{\beta}}\right)_g&=\mathcal{A}(s,e)\left( Z^{\alpha}_{\beta}(g)\left(\frac{\partial}{\partial y^{\alpha}}\right)_e\right)=\delta^{\gamma}_{\beta}Z^{\alpha}_{\gamma}(g),
\end{align*}
we obtain that
$$\tau^{-1}_{\A}(\tilde{B}^{\alpha})=-A^{\alpha}_a(s,e)dx^a+Z^{\alpha}_{\beta}(g)dy^{\beta}$$
and we complete the local expression of $\tau^{-1}_{\A}$. Then, we get the local expression of $\tau_{\A}$
\begin{align*}
\tau_{\A}:(T^*P)/G \to&  T^*\Sigma\oplus\g^*\\
dx^a\mapsto& dx^a\\
dy^{\beta}\mapsto& Z^{\beta}_{\alpha}(g^{-1})\tilde{B}^{\alpha}+Z^{\beta}_{\gamma}(g^{-1})A^{\gamma}_a(s,e)dx^a=Z^{\beta}_{\alpha}(g^{-1})\tilde{B}^{\alpha}+A^{\beta}_a(s,g)dx^a
\end{align*}
Consequently,
$$\kappa\left(p^i_{\beta}\frac{\partial}{\partial x^i}\otimes dy^{\alpha}\otimes\vol\right)=
p^i_{\beta}A^{\beta}_a(s,g)\frac{\partial}{\partial x^i}\otimes dx^a\otimes\vol+
p^i_{\beta}Z^{\beta}_{\alpha}(g^{-1})\frac{\partial}{\partial x^i}\otimes \tilde{B}^{\alpha}\otimes\vol,$$
$$\kappa\left(p^i_a\frac{\partial}{\partial x^i}\otimes dx^a\otimes\vol\right)=p^i_a\frac{\partial}{\partial x^i}\otimes dx^a\otimes\vol.$$
and the local expression follows.
\end{proof}

\begin{lemma} \label{baker-campbell}
The transformation $Z^{\beta}_{\alpha}(g)$ can be expressed as
$$Z^{\beta}_{\alpha}(g)=\sum^{\infty}_{n=1}\left.\frac{d}{d\varepsilon}\right\vert_{\varepsilon =0}c^{\beta}_n(-y^{\gamma}B_{\gamma}:y^{\gamma}B_{\gamma}+\varepsilon B_{\alpha}),$$
where $c_n$ are the coefficients of the Baker-Campbell-Hausdorff formula \begin{equation*}
\exp X\exp Y=\exp\left( \sum^{\infty}_{n=1} c_n(X:Y)\right), \quad X,Y\in\mathfrak{g}.
\end{equation*}
Furthermore,
$$\left.\frac{\partial Z^{\beta}_{\alpha}}{\partial y^{\gamma}}\right\vert_{y=(s,e)}=\frac{1}{2}c^{\beta}_{\alpha\gamma},$$
where $c^{\beta}_{\alpha\gamma}$ are the coefficients of the Lie bracket in $\mathfrak{g}$.
\end{lemma}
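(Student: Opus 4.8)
The plan is to compute $Z^{\beta}_{\alpha}(g)$ directly from its defining relation $T_gL_{g^{-1}}\left(\frac{\partial}{\partial y^{\alpha}}\right)_g=Z^{\beta}_{\alpha}(g)\left(\frac{\partial}{\partial y^{\beta}}\right)_e$ by interpreting $\left(\frac{\partial}{\partial y^{\alpha}}\right)_g$ as the velocity of a curve and left-translating it to the identity, where the Baker--Campbell--Hausdorff (BCH) product linearises. Since the $y^{\alpha}$ are exponential coordinates with $g=\exp(y^{\gamma}B_{\gamma})$, writing $X=y^{\gamma}B_{\gamma}$ the coordinate vector $\left(\frac{\partial}{\partial y^{\alpha}}\right)_g$ is the velocity at $\varepsilon=0$ of $\varepsilon\mapsto\exp(X+\varepsilon B_{\alpha})$. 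Left-translating by $g^{-1}=\exp(-X)$ yields the curve $\varepsilon\mapsto\exp(-X)\exp(X+\varepsilon B_{\alpha})$, whose velocity at $\varepsilon=0$ is by definition $T_gL_{g^{-1}}\left(\frac{\partial}{\partial y^{\alpha}}\right)_g$, and BCH turns this into $\exp\big(\sum_{n=1}^{\infty}c_n(-X:X+\varepsilon B_{\alpha})\big)$.

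The key observation is that at $\varepsilon=0$ this curve sits at the identity: the argument $\sum_{n}c_n(-X:X)$ is precisely the BCH expression for $\log(\exp(-X)\exp X)=\log e=0$. Hence the $\mathfrak{g}$-valued curve $\varepsilon\mapsto\sum_{n}c_n(-X:X+\varepsilon B_{\alpha})$ passes through $0\in\mathfrak{g}$ at $\varepsilon=0$; since $T_0\exp=\id_{\mathfrak{g}}$ under the identification $T_eG\cong\mathfrak{g}$ (which also gives $\left(\frac{\partial}{\partial y^{\beta}}\right)_e=B_{\beta}$), the velocity of the $G$-curve coincides with that of this $\mathfrak{g}$-curve. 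Therefore
$$Z^{\beta}_{\alpha}(g)B_{\beta}=\sum_{n=1}^{\infty}\left.\frac{d}{d\varepsilon}\right\vert_{\varepsilon=0}c_n(-X:X+\varepsilon B_{\alpha}),$$
and reading off the $B_{\beta}$-component gives the first formula. The point that requires care, and which I regard as the main obstacle, is justifying the interchange of the infinite BCH sum with the $\varepsilon$-derivative (and the passage of the velocity through $\exp$): this is legitimate because for $y$ in a small enough neighbourhood of $0$ the BCH series converges and may be differentiated term by term.

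For the second formula I would isolate the part of $Z^{\beta}_{\alpha}(g)$ that is linear in $y$ using only the lowest coefficients $c_1(U:V)=U+V$ and $c_2(U:V)=\frac{1}{2}[U,V]$, since each $c_n$ with $n\geq 3$ is homogeneous of degree $\geq 3$ in $(U,V)$ and hence contributes, after $\left.\frac{d}{d\varepsilon}\right\vert_{\varepsilon=0}$, only terms of order $\geq 2$ in $y$. A direct computation gives $\left.\frac{d}{d\varepsilon}\right\vert_{\varepsilon=0}c_1(-X:X+\varepsilon B_{\alpha})=B_{\alpha}$ and $\left.\frac{d}{d\varepsilon}\right\vert_{\varepsilon=0}c_2(-X:X+\varepsilon B_{\alpha})=-\frac{1}{2}[X,B_{\alpha}]$, whence $Z^{\beta}_{\alpha}(g)B_{\beta}=B_{\alpha}-\frac{1}{2}y^{\gamma}[B_{\gamma},B_{\alpha}]+O(y^2)$. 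Using $[B_{\gamma},B_{\alpha}]=c^{\beta}_{\gamma\alpha}B_{\beta}$ this reads $Z^{\beta}_{\alpha}=\delta^{\beta}_{\alpha}-\frac{1}{2}y^{\gamma}c^{\beta}_{\gamma\alpha}+O(y^2)$, and differentiating at $y=0$ together with the antisymmetry $c^{\beta}_{\gamma\alpha}=-c^{\beta}_{\alpha\gamma}$ yields $\left.\frac{\partial Z^{\beta}_{\alpha}}{\partial y^{\gamma}}\right\vert_{y=(s,e)}=\frac{1}{2}c^{\beta}_{\alpha\gamma}$.

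As an independent cross-check, the series in the first formula can be summed in closed form: the differential of the exponential map gives $T_gL_{g^{-1}}\circ T_X\exp=\frac{1-e^{-\ad_X}}{\ad_X}=\sum_{n\geq 0}\frac{(-1)^n}{(n+1)!}\ad_X^n$, so that $Z^{\beta}_{\alpha}(g)B_{\beta}=B_{\alpha}-\frac{1}{2}\ad_X(B_{\alpha})+\cdots$, whose first-order term reproduces the linearisation above and confirms the second identity.
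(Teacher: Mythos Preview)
Your proof is correct and follows essentially the same route as the paper: you represent $(\partial/\partial y^{\alpha})_g$ as the velocity of $\varepsilon\mapsto\exp(X+\varepsilon B_{\alpha})$, left-translate, and apply BCH for the first formula; then you use the homogeneity of the $c_n$ to isolate the linear-in-$y$ part and read off the derivative. The only cosmetic difference is that for the second identity the paper parametrises along a single coordinate curve $g(\epsilon)=\exp(\epsilon B_{\gamma})$ and differentiates in $\epsilon$, whereas you expand $Z^{\beta}_{\alpha}$ directly as a Taylor series in $y$; both arguments reduce to the same observation that only $c_2$ contributes at first order.
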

\begin{proof}
For the first part, observe that
\begin{align*}
T_gL_{g^{-1}}\left(\frac{\partial}{\partial y^{\alpha}}\right)_g=& T_gL_{g^{-1}} \left(\left.\frac{d}{d\varepsilon}\right\vert_{\varepsilon =0}\exp(y^{\gamma}B_{\gamma}+\varepsilon B_{\alpha})\right)\\
=& \left.\frac{d}{d\varepsilon}\right\vert_{\varepsilon =0} \exp(-y^{\gamma}B_{\gamma})\exp(y^{\gamma}B_{\gamma}+\varepsilon B_{\alpha})\\
=& \sum^{\infty}_{n=1}\left.\frac{d}{d\varepsilon}\right\vert_{\varepsilon =0}c^{\beta}_n(-y^{\gamma}B_{\gamma}:y^{\gamma}B_{\gamma}+\varepsilon B_{\alpha})\left(\frac{\partial}{\partial y^{\beta}}\right)_e.
\end{align*}
Then,
\begin{align*}
\left.\frac{\partial Z^{\beta}_{\alpha}}{\partial y^{\gamma}}\right\vert_{y=(s,e)}=&\left.\frac{d}{d \epsilon}\right\vert_{\epsilon =0}Z^{\beta}_{\alpha}(\exp(\epsilon B_{\gamma}))=\left.\frac{d}{d \epsilon}\right\vert_{\epsilon =0}\sum^{\infty}_{n=1}\left.\frac{d}{d\varepsilon}\right\vert_{\varepsilon =0}c^{\beta}_n(-\epsilon B_{\gamma}:\epsilon B_{\gamma}+\varepsilon B_{\alpha})\\
=&\left.\frac{d}{d \epsilon}\right\vert_{\epsilon =0}\left.\frac{d}{d\varepsilon}\right\vert_{\varepsilon =0}c^{\beta}_2(-\epsilon B_{\gamma}:\epsilon B_{\gamma}+\varepsilon B_{\alpha})\\=&\left.\frac{d}{d \epsilon}\right\vert_{\epsilon =0}\left.\frac{d}{d\varepsilon}\right\vert_{\varepsilon =0}\frac{1}{2}[-\epsilon B_{\gamma},\epsilon B_{\gamma}+\varepsilon B_{\alpha}]^{\beta}=-\frac{1}{2}c^{\beta}_{\gamma\alpha}=\frac{1}{2}c^{\beta}_{\alpha\gamma},
\end{align*}
where we have used that $c^{\beta}_n(-\epsilon B_{\gamma}:\epsilon B_{\gamma}+\varepsilon B_{\alpha})$ is an homogeneous polynomial of degree $n$ with respect to $\varepsilon$ and $\epsilon$ in order to obtain the second-to-last equivalence.
\end{proof}

\begin{lemma} \label{func.inv}
Let $E:\Pi_P\to \R$ be a $G$-invariant real function on $\Pi_P$, then
$$0=\frac{\partial E}{\partial y^{\alpha}}+\frac{1}{2}c^{\gamma}_{\alpha\beta}\frac{\partial E}{\partial p^i_\beta}.$$
Similarly, let $C: P\to \mathfrak{g}$ be a $G$-equivariant function on $P$, then
$$0=\frac{\partial C^{\gamma}}{\partial y^{\alpha}}+\frac{1}{2}c^{\gamma}_{\beta\alpha}C^{\beta}.$$
\end{lemma}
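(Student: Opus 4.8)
The plan is to turn both statements into infinitesimal versions of the (in)variance hypotheses and to evaluate them at a point of the form $y=(s,e)$, where the fundamental vector fields of the $G$-action become completely explicit through the Baker--Campbell--Hausdorff expansion already recorded in Lemma \ref{baker-campbell}. Writing $\xi^{\Pi_P}$ (resp. $\xi^P$) for the infinitesimal generator of $\xi\in\mathfrak{g}$ on $\Pi_P$ (resp. on $P$), $G$-invariance of $E$ is equivalent to $\xi^{\Pi_P}[E]=0$ for every $\xi$, and $G$-equivariance of $C$ is equivalent to $\xi^P[C]=\ad_\xi\circ C=[\xi,C]$ for every $\xi$; it therefore suffices to take $\xi=B_\alpha$ and read off the two identities once these generators are written in the adapted coordinates.

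For the first identity I would first describe $(B_\alpha)^{\Pi_P}$ in coordinates. Because $G$ acts vertically over $M$ and, in the chosen trivialization, only moves the group coordinate $y^\beta$ while fixing $x^i,x^a$, the induced action leaves the $TM$ and $\bigwedge^n T^*M$ tensor factors of $\Pi_P$ untouched and preserves the coframe $dx^a$; hence $x^i,x^a$ and the momenta $p^i_a$ are invariant, and the generator has the shape
\[
(B_\alpha)^{\Pi_P}=(\xi^P)^\beta\frac{\partial}{\partial y^\beta}-p^i_\gamma\,\frac{\partial (\xi^P)^\gamma}{\partial y^\beta}\,\frac{\partial}{\partial p^i_\beta},
\]
where $(\xi^P)^\beta$ are the components of $\xi^P=(\xi^P)^\beta\partial_{y^\beta}$ and the $\partial_{p^i_\beta}$-coefficient is the natural lift on the $V^*P$ factor, i.e. the cotangent-type transformation of the vertical momenta dual to that of $\partial_{y^\beta}$. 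Evaluating at $y=(s,e)$ gives $(\xi^P)^\beta=\delta^\beta_\alpha$ and, from the very BCH computation behind Lemma \ref{baker-campbell}, $\partial_{y^\beta}(\xi^P)^\gamma\big|_{(s,e)}=\tfrac12 c^\gamma_{\alpha\beta}$. Substituting into $(B_\alpha)^{\Pi_P}[E]=0$ yields the asserted relation between $\partial E/\partial y^\alpha$ and $\partial E/\partial p^i_\beta$.

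The second identity is then immediate from the same data: differentiating $C\circ\Phi_{\exp(tB_\alpha)}=\Ad_{\exp(tB_\alpha)}\circ C$ at $t=0$ gives $(\xi^P)^\delta\,\partial_{y^\delta}C^\gamma=c^\gamma_{\alpha\beta}C^\beta$, and evaluating at $(s,e)$, where $(\xi^P)^\delta=\delta^\delta_\alpha$, produces the stated equation for $\partial C^\gamma/\partial y^\alpha$ in terms of the structure constants and $C^\beta$.

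The main obstacle I anticipate is pinning down the $\partial_{p^i_\beta}$-component of $(B_\alpha)^{\Pi_P}$, that is, how the vertical momenta transform under the lifted action; this is exactly where Lemma \ref{baker-campbell} enters, through the derivative of $Z^\beta_\alpha$ at the identity (equivalently the derivative of the generator coefficients), and where one must be careful with the inverse $g\mapsto g^{-1}$ appearing in the $\mu$-coordinate of Proposition \ref{localkappa} and with the order of the lower indices of $c^\gamma_{\alpha\beta}$ in order to fix the sign and the factor $\tfrac12$. An alternative, equivalent route avoiding the explicit generator is to use $G$-invariance in the form $E=\bar E\circ\kappa$ and to differentiate through the components $\sigma^i_a$ and $\mu^i_\alpha$ of $\kappa$ given in Proposition \ref{localkappa}; in the resulting expression the terms involving $\partial\bar E/\partial\mu^i_\beta$ cancel directly against the momentum contribution, while those involving $\partial\bar E/\partial\sigma^i_a$ cancel once one uses the equivariance $\partial_{y^\alpha}A^\gamma_a|_{(s,e)}=\tfrac12 c^\gamma_{\alpha\beta}A^\beta_a$ of the connection coefficients, which itself follows from $A^\beta_a(s,g)=Z^\beta_\gamma(g^{-1})A^\gamma_a(s,e)$ and Lemma \ref{baker-campbell}.
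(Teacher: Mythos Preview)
Your approach is essentially the paper's: both arguments differentiate the (in)variance hypothesis along the one-parameter subgroup $g(\varepsilon)=\exp(\varepsilon B_\alpha)$, evaluate at $y=(s,e)$, and invoke Lemma~\ref{baker-campbell} for the derivative of $Z^\beta_\gamma$ at the identity. The paper writes the finite relation $E(x^i,x^a,0,p^i_a,p^i_\beta)=E\bigl(x^i,x^a,y^\lambda(\varepsilon),p^i_a,Z^\gamma_\beta(g(\varepsilon))p^i_\gamma\bigr)$ and differentiates in $\varepsilon$, whereas you package the same computation as the action of the infinitesimal generator $(B_\alpha)^{\Pi_P}$; the content is identical.

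One caution on the second identity. The paper's ``equivariance'' is the coordinate transformation law $C^\beta(s,g)=Z^\beta_\gamma(g^{-1})\,C^\gamma(s,e)$ (this is exactly how the lemma is later applied to the connection coefficients $A^\beta_a$), and differentiating \emph{that} along $g(\varepsilon)$ yields the factor $\tfrac12$ via Lemma~\ref{baker-campbell}. Your main argument instead differentiates $C\circ\Phi_g=\Ad_g\circ C$, which at $(s,e)$ gives $\partial_{y^\alpha}C^\gamma=c^\gamma_{\alpha\beta}C^\beta$ with \emph{no} $\tfrac12$: the matrix $Z^\beta_\gamma(g^{-1})$ is not the matrix of $\Ad_g$ in the normal-coordinate basis (at first order they differ precisely by that factor of $2$, reflecting the derivative of the exponential map). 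Your ``alternative route'' via $A^\beta_a(s,g)=Z^\beta_\gamma(g^{-1})A^\gamma_a(s,e)$ is exactly the paper's computation and does recover the stated $\tfrac12$; use that derivation for part two rather than the $\Ad$-equivariance shortcut.
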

\begin{proof}
Since $E$ is invariant under the action of $G$ on $\Pi_P$, for any $g\in G$,
$$E(x^i,x^a,0,p^i_a,p^i_{\beta})=E(g\cdot(x^i,x^a,0,p^i_a,p^i_{\beta}))=E(x^i,x^a, y^{\lambda}, p^i_a,Z^{\gamma}_{\beta}(g)p^i_{\gamma}).$$
In particular, for $g(\varepsilon)=\exp(\varepsilon B_{ \alpha})$, $y^{\alpha}=\varepsilon$, $y^{\beta}=0$ for $\beta\neq\alpha$ and
$$E(x^i,x^a,0,p^i_a,p^i_{\beta})=E(x^i,x^a, y^{\lambda}(\varepsilon), p^i_a,Z^{\gamma}_{\beta}(g(\varepsilon))p^i_{\gamma}).$$
Derivation with respect to $\varepsilon$ and Lemma \ref{baker-campbell} finish the first part of the lemma.
In a similar way, as $C$ is equivariant, for any $g\in G$, $$C^{\beta}(x^i,x^a, y^{\lambda})=Z^{\beta}_{\gamma}(g^{-1})C^{\gamma}(x^i,x^a,0).$$ In particular, it holds for $g(\varepsilon)=\exp(\varepsilon B_{ \alpha})$ and derivation with respect to  $\varepsilon$ concludes the proof.
\end{proof}

\begin{proposition} \label{dif.kappa}
The local expression of the differential of the projection $\kappa:\Pi_P \to\Pi_{\Sigma}\oplus (TM\otimes\g^*\otimes\bigwedge^{n}T^*M)$ at point $y=(s,e)$ is
\begin{align*}
T\kappa: T\Pi_P \to& T(\Pi_{\Sigma}\oplus (TM\otimes\g^*\otimes\bigwedge^{n}T^*M))\\
\frac{\partial}{\partial x^i}\mapsto &\frac{\partial}{\partial x^i} + \mu^j_{\gamma}\frac{\partial A^{\gamma}_b}{\partial x^i}\frac{\partial}{\partial \sigma^j_b}\\
\frac{\partial}{\partial x^a}\mapsto &\frac{\partial}{\partial x^a} + \mu^j_{\gamma}\frac{\partial A^{\gamma}_b}{\partial x^a}\frac{\partial}{\partial \sigma^j_b}\\
\frac{\partial}{\partial y^{\alpha}}\mapsto &-\frac{1}{2}\mu^{j}_{\gamma}c^{\gamma}_{\beta\alpha}A^{\beta}_b\frac{\partial}{\partial \sigma^j_b}-\frac{1}{2}\mu^{j}_{\gamma}c^{\gamma}_{\beta\alpha}\frac{\partial}{\partial \mu^j_{\beta}}\\
\frac{\partial}{\partial p^i_a}\mapsto &\frac{\partial}{\partial \sigma^i_a}\\
\frac{\partial}{\partial p^i_{\alpha}}\mapsto & A^{\alpha}_{b}\frac{\partial}{\partial \sigma^i_b}+\frac{\partial}{\partial \mu^i_{\alpha}}\\
\end{align*}
\end{proposition}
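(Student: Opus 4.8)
The plan is to compute the Jacobian of $\kappa$ directly from the coordinate expression established in Proposition \ref{localkappa}, namely $\sigma^i_a = p^i_a + p^i_\beta A^\beta_a$ and $\mu^i_\alpha = Z^\beta_\alpha(g^{-1})p^i_\beta$, and then specialize every entry to $y=(s,e)$, where $Z^\beta_\alpha(e)=\delta^\beta_\alpha$ and hence $\mu^i_\beta = p^i_\beta$. Four of the six rows are immediate. Differentiating $\sigma^i_a$ and $\mu^i_\alpha$ with respect to $p^i_a$ and $p^i_\alpha$ produces constant coefficients (Kronecker deltas, the value $A^\alpha_b$, and $Z^\alpha_\gamma(e)=\delta^\alpha_\gamma$), which give the last two rows; and since neither $A^\beta_a$ nor $Z$ depends on the base coordinates $x^i,x^a$, the $\partial/\partial x^i$ and $\partial/\partial x^a$ rows only pick up the term $p^j_\gamma\,\partial A^\gamma_b/\partial x^i = \mu^j_\gamma\,\partial A^\gamma_b/\partial x^i$ (and the analogue for $x^a$) after substituting $p=\mu$ at $(s,e)$.

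The only nontrivial row is $\partial/\partial y^\alpha$, which is where both auxiliary lemmas enter. For its $\mu$-component I would differentiate $\mu^i_\beta = Z^\delta_\beta(g^{-1})p^i_\delta$; since $g=\exp(y^\lambda B_\lambda)$ forces $g^{-1}=\exp(-y^\lambda B_\lambda)$, the chain rule introduces a sign, so that Lemma \ref{baker-campbell} gives $\partial Z^\delta_\beta(g^{-1})/\partial y^\alpha|_{(s,e)} = -\tfrac{1}{2}c^\delta_{\beta\alpha}$, whence the coefficient $-\tfrac{1}{2}\mu^i_\gamma c^\gamma_{\beta\alpha}$ of $\partial/\partial\mu^i_\beta$.

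For the $\sigma$-component of this same row I need $\partial A^\beta_b/\partial y^\alpha$ at $(s,e)$, and the key observation is that, for each fixed $b$, the coefficient $A^\beta_b$ defines a $G$-equivariant function $P\to\g$ in the sense required by Lemma \ref{func.inv}. I would verify this by combining the horizontality of the principal connection, $\mathcal{A}(\hlift^y_{\A}(\partial/\partial x^b))=0$, with the $\Ad$-equivariance $\Phi_g^*\mathcal{A}=\Ad_g\circ\mathcal{A}$: evaluating $\mathcal{A}$ on $\partial/\partial x^b$ at the points $(s,g)$ and $(s,e)$, together with the identity $\mathcal{A}_{(s,g)}(\partial/\partial y^\beta)=Z^\gamma_\beta(g)\,\Ad_g B_\gamma$, yields the transformation rule $A^\gamma_b(s,g)=Z^\gamma_\delta(g^{-1})A^\delta_b(s,e)$, which is exactly the equivariance hypothesis of Lemma \ref{func.inv}. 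That lemma then gives $\partial A^\gamma_b/\partial y^\alpha|_{(s,e)} = -\tfrac12 c^\gamma_{\beta\alpha}A^\beta_b$, so the $\sigma$-contribution is $p^j_\gamma\,\partial A^\gamma_b/\partial y^\alpha = -\tfrac12\mu^j_\gamma c^\gamma_{\beta\alpha}A^\beta_b$ after setting $p=\mu$, matching the stated formula.

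I expect the main obstacle to be precisely this last point: recognizing that the connection coefficients $A^\beta_b$ satisfy the equivariance hypothesis of Lemma \ref{func.inv}. This is not a formal consequence of Proposition \ref{localkappa} and must be extracted from the defining properties of the principal connection $\A$. Once the rule $A^\gamma_b(s,g)=Z^\gamma_\delta(g^{-1})A^\delta_b(s,e)$ is in place, the remainder is routine index bookkeeping, the substitution $p^i_\beta=\mu^i_\beta$ valid at $y=(s,e)$, and careful tracking of the two sign conventions (the order of indices in $c^\gamma_{\beta\alpha}$ and the $g\mapsto g^{-1}$ flip).
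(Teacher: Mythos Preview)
Your approach is correct and matches the paper's: differentiate the coordinate expression of Proposition \ref{localkappa}, apply Lemma \ref{baker-campbell} for the derivative of $Z(g^{-1})$ and Lemma \ref{func.inv} (via the equivariance of the connection coefficients $A^\beta_b$) for the $A$-derivative, then evaluate at $(s,e)$. One minor slip: you assert that $A^\beta_a$ does not depend on the base coordinates $x^i,x^a$, yet immediately (and correctly) compute $\partial A^\gamma_b/\partial x^i$; only $Z$ is independent of base coordinates, while $A^\beta_a$ certainly depends on them---your computation is right, the preceding clause is not. Note also that the equivariance relation $A^\gamma_b(s,g)=Z^\gamma_\delta(g^{-1})A^\delta_b(s,e)$ you propose to re-derive is already established in the proof of Proposition \ref{localkappa} (it is the identity used when simplifying $\tau_{\A}(dy^\beta)$), so no extra work is needed there.
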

\begin{proof}
The derivative of the local expression in Proposition \ref{localkappa} can be easily obtained by applying Lemma \ref{func.inv} to the invariance of $\mathcal{A}$, the second part of 	 Lemma \ref{baker-campbell} and evaluation at $y=(s,e)$.
\end{proof}

As stated in Theorem \ref{TH bracket formulation}, horizontal Poisson $(n-1)$-forms on $\Pi_P$ can be used to describe the dynamics of a Hamiltonian system. Let $V$ be any vector bundle over $P$ or $\Sigma$, we introduce a linear operator between vector bundles
\begin{align*}
\llangle\cdot,\cdot\rrangle: \left(TM\otimes V^*\otimes \bigwedge^{n}T^*M\right)\otimes V \to& \bigwedge^{n-1}T^*M \\
\left(\frac{\partial}{\partial x^i}\otimes w\otimes \mathrm{v},v\right)\mapsto& \langle w, v\rangle i_{\partial /\partial x^i}\mathrm{v},
\end{align*}
and for any $\gamma\in\Gamma(V)$, the $(n-1)$-form $\theta_{\gamma}$ is defined on $q\in TM\otimes V^*\otimes \bigwedge^{n}T^*M$ by $(\theta_{\gamma})_q=\llangle q, \gamma \rrangle$. It is known that:
\begin{proposition}\label{n-1.formas unred}\emph{\cite[Proposition 4.3]{someremarks}}
Any Poisson $(n-1)$-form on $\Pi_{P}$ can be written as
$$F=\theta_X+\pi^*_{P,\Pi_P}\omega+\Upsilon,$$
where $X\in\mathfrak{X}^V(P)=\Gamma (VP)$ is a vertical vector field on $P$, $\omega$ is an horizontal $(n-1)$-form on $P$ and $\Upsilon$ is a closed horizontal $(n-1)$-form on $\Pi_P$.
\end{proposition}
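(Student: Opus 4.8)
The plan is to reduce the statement to the defining condition of a Poisson $(n-1)$-form and then to extract the three summands by a coefficient comparison in adapted coordinates. Recall that $F$ is a Poisson $(n-1)$-form precisely when there is a vertical vector field $\chi_F$ with $i_{\chi_F}\Omega=dF$; since the map $\chi\mapsto i_\chi\Omega$ is injective on vertical vector fields, $\chi_F$ is unique. Because a Poisson form on $J^1P^*$ descends to $\Pi_P$, I would lift $F$ to a $\tp$-independent Poisson $(n-1)$-form on $J^1P^*$ and work with the canonical $\Omega$ there. Writing $\chi_F=\chi^a\,\partial/\partial y^a+\chi^i_a\,\partial/\partial p^i_a+\tilde\chi\,\partial/\partial\tp$, a direct computation gives
\begin{equation*}
i_{\chi_F}\Omega=\chi^a\,dp^i_a\wedge d^{n-1}x_i-\chi^i_a\,dy^a\wedge d^{n-1}x_i-\tilde\chi\,d^nx,
\end{equation*}
so being a Poisson form is equivalent to $dF$ lying in the image of this map: an $n$-form whose $dy$- and $d^nx$-parts are arbitrary but whose $dp$-part occurs only in the diagonal combination $dp^i_a\wedge d^{n-1}x_i$ with a coefficient $\chi^a$ that does not depend on the index $i$.

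First I would reduce to the case where $F$ is horizontal for $\Pi_P\to M$. Grading forms by their number of $dy,dp$ factors (vertical degree) and splitting $d=d_H+d_V$ accordingly, the image above has vertical degree at most one, so the top vertical component of $F$ is $d_V$-closed. The fibers of $\Pi_P\to M$ being vector spaces, the $d_V$-Poincar\'e lemma lets me subtract an exact form and lower the top degree; iterating, $F$ equals a horizontal form $G^i(x,y,p)\,d^{n-1}x_i$ modulo a closed remainder that I record as part of $\Upsilon$.

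The crux is the analysis of the horizontal piece. Comparing $dF=dG^i\wedge d^{n-1}x_i$ with the image above forces, on the $dp$-terms, the two conditions $\partial G^i/\partial p^k_b=0$ for $k\neq i$ and $\partial G^i/\partial p^i_b=\chi^b$ (no sum) with a common value $\chi^b$. The first says each $G^i$ depends on the momenta only through the block $p^i_\bullet$; the second then equates a function of $p^i_\bullet$ with a function of $p^j_\bullet$ for $i\neq j$, which — and this is the point where $n=\dim M\ge 2$ is essential, in contrast with mechanics — can only hold if $\chi^b=\chi^b(x,y)$ is independent of all momenta. Integrating, $G^i=X^a(x,y)p^i_a+\omega^i(x,y)$, i.e. $G^i$ is affine in $p$ with a linear coefficient that is a genuine function on $P$.

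Finally I would assemble and globalize. Setting $X=X^a\,\partial/\partial y^a\in\mathfrak{X}^V(P)$ gives $X^a p^i_a\,d^{n-1}x_i=\theta_X$ through the pairing $\llangle\cdot,\cdot\rrangle$, and $\omega=\omega^i(x,y)\,d^{n-1}x_i$ is a horizontal $(n-1)$-form on $P$ whose pullback supplies the momentum-independent part $\pi^*_{P,\Pi_P}\omega$; the leftover $\Upsilon=F-\theta_X-\pi^*_{P,\Pi_P}\omega$ is then closed, and horizontal when $F$ is. Because $\theta_X$, $\pi^*_{P,\Pi_P}\omega$ and the closed remainder are defined intrinsically — $X$ as the fiber-linear part of $F$ along $\Pi_P\to P$ and $\omega$ as its restriction to the zero section — the locally obtained data patch to global objects and the decomposition is coordinate independent. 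The main obstacle is exactly the step forcing $\chi^b$ to be momentum independent: it is the field-theoretic rigidity, absent when $n=1$, that turns the linear coefficient into a bona fide vertical vector field on $P$ rather than an arbitrary function of the momenta.
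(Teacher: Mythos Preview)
The paper does not prove this proposition; it is quoted verbatim from the reference \cite{someremarks} and used as input for Theorem~\ref{red.n-1.forms}. So there is no in-paper proof to compare against directly. That said, the paper does prove the closely analogous Proposition~\ref{n-1.formas} for the reduced bundle, and the method there is precisely your coefficient comparison: write $f=f^i\vol_i$, expand $d^{h,J^1\Sigma^*}f$, contract a generic vertical $\chi_f$ with $\Omega_\Sigma$, and read off from the matching that $\delta^i_jY^a=\partial f^i/\partial\sigma^j_a$, which forces the affine-in-momenta form $f^i=\sigma^i_aY^a+g^i$. Your argument is the same mechanism transported to $\Pi_P$, with the added (and correct) remark that the step ``$\partial G^i/\partial p^i_b$ independent of $i$'' genuinely needs $n\ge 2$.

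One minor point: your preliminary reduction via a $d_V$-Poincar\'e lemma is unnecessary in this paper's conventions, since Poisson forms are defined to be horizontal over $M$ from the outset (see the paragraph introducing Poisson forms in Section~\ref{preliminaries}). You can start directly with $F=F^i\,d^{n-1}x_i$. Relatedly, once $F$ is horizontal the leftover $\Upsilon=F-\theta_X-\pi^*_{P,\Pi_P}\omega$ is automatically horizontal, and a closed horizontal $(n-1)$-form on $\Pi_P$ has coefficients depending on $x$ alone, so $\Upsilon$ is really a basic form; its presence in the statement is a harmless redundancy that could be absorbed into $\omega$. None of this affects the correctness of your core argument.
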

We shall see that these objects project to the reduced polysymplectic space.
\begin{theorem} \label{red.n-1.forms}
The projection of a $G$-invariant horizontal Poisson $(n-1)$-form on $\Pi_P$,
$F=\theta_X+\pi^*_{P,\Pi_P}\omega+\Upsilon$
is an horizontal $(n-1)$-form on $\Pi_{\Sigma}\oplus (TM\otimes\g^*\otimes\bigwedge^{n}T^*M)$ of the kind
$$f=\theta_Y+\theta_{\bar{\xi}}+\bar{\pi}^*\bar{\omega}+\bar{\Upsilon},$$
where $Y\oplus\bar{\xi}\in\Gamma(V\Sigma\oplus\g\to M)$ such that $\alpha_{\A}([X]_G)=Y\oplus\bar{\xi}$, $\bar{\omega}$ is an horizontal $(n-1)$-form on $\Sigma$, $\bar{\Upsilon}$ is a closed horizontal $(n-1)$-form on $\Pi_{\Sigma}\oplus (TM\otimes\g^*\otimes\bigwedge^{n}T^*M)$, and $\bar{\pi}=\pi_{\Sigma,\Pi_{\Sigma}\oplus (TM\otimes\g^*\otimes\bigwedge^{n}T^*M)}$.
\end{theorem}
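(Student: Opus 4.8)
The plan is to treat the three summands of $F$ separately, after reducing to the case of $G$-invariant constituents. First I would observe that $F$, being $G$-invariant and horizontal with respect to $\Pi_P\to M$, is basic for the $G$-action: the infinitesimal generators of $G$ on $\Pi_P$ are $\pi_{M,\Pi_P}$-vertical, since the action covers the vertical action on $P$, so $i_{\zeta}F=0$ for every generator $\zeta$, and $F$ descends through $\kappa=\psi_{\A}\circ\pi_{\Pi_P/G,\Pi_P}$ to a unique horizontal $(n-1)$-form $f$ with $\kappa^*f=F$. Because the decomposition of Proposition \ref{n-1.formas unred} is extracted naturally from $F$ together with the $G$-invariant canonical data, the vertical field $X$, the form $\omega$ and the closed remainder $\Upsilon$ may themselves be taken $G$-invariant; in particular $[X]_G\in\Gamma(VP/G)$ is well defined and $\alpha_{\A}([X]_G)=Y\oplus\bar{\xi}$ determines $Y\in\Gamma(V\Sigma)$ and $\bar{\xi}\in\Gamma(\g)$, so that the reduced objects $\theta_Y$ and $\theta_{\bar{\xi}}$ make sense.

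The crux is the term $\theta_X$. Here I would exploit that $\psi_{\A}$ acts on the middle factor by $\tau_{\A}=(\alpha_{\A}^{-1})^*$, so that the fibrewise pairing $\llangle\cdot,\cdot\rrangle$ of $V^*P$ against $VP$ transports into the sum of the pairings of $V^*\Sigma$ against $V\Sigma$ and of $\g^*$ against $\g$; this duality is exactly what should turn $\theta_X$ into $\theta_Y+\theta_{\bar{\xi}}$. To verify it I would compute $\kappa^*(\theta_Y+\theta_{\bar{\xi}})$ in the adapted coordinates of Proposition \ref{localkappa} at a point $y=(s,e)$, where $\sigma^i_a=p^i_a+p^i_{\beta}A^{\beta}_a$ and $\mu^i_{\alpha}=p^i_{\alpha}$. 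Writing $Y=X^a\,\partial/\partial x^a$ and $\bar{\xi}=(X^{\alpha}-A^{\alpha}_aX^a)\tilde{B}_{\alpha}$, the two contributions combine as $(p^i_a+p^i_{\beta}A^{\beta}_a)X^a\,i_{\partial/\partial x^i}\vol+p^i_{\alpha}(X^{\alpha}-A^{\alpha}_aX^a)\,i_{\partial/\partial x^i}\vol$, and the connection terms $p^i_{\beta}A^{\beta}_aX^a$ cancel, leaving $(p^i_aX^a+p^i_{\alpha}X^{\alpha})\,i_{\partial/\partial x^i}\vol=\theta_X$. Invariance then propagates the identity off the slice $y=(s,e)$. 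This cancellation, which reflects precisely the duality of the splittings $\alpha_{\A}$ and $\tau_{\A}$, is the step I expect to be the most delicate to get right.

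For the remaining two terms the arguments are softer. The form $\omega$ is horizontal for $P\to M$ and $G$-invariant, while the generators of $G$ are $\pi_{M,P}$-vertical; hence $i_{\zeta}\omega=0$ and $\omega$ is basic for $P\to\Sigma$, descending to a horizontal $(n-1)$-form $\bar{\omega}$ on $\Sigma$. Using $\bar{\pi}\circ\kappa=\pi_{\Sigma,P}\circ\pi_{P,\Pi_P}$ together with $\pi_{\Sigma,P}^*\bar{\omega}=\omega$, I would conclude $\kappa^*(\bar{\pi}^*\bar{\omega})=\pi^*_{P,\Pi_P}\omega$. Finally, $\Upsilon$ descends through the submersion $\kappa$ to $\bar{\Upsilon}$ with $\kappa^*\bar{\Upsilon}=\Upsilon$: closedness passes down because $\kappa^*$ commutes with $d$ and is injective on forms, and horizontality passes down by pairing $\bar{\Upsilon}$ with reduced $\pi_M$-vertical vectors and lifting these $\kappa$-relatedly to $\pi_{M,\Pi_P}$-vertical vectors, on which $\Upsilon$ vanishes. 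Assembling the three pieces yields $f=\theta_Y+\theta_{\bar{\xi}}+\bar{\pi}^*\bar{\omega}+\bar{\Upsilon}$, as claimed.
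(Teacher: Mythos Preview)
Your proposal is correct and follows essentially the same approach as the paper: both treat the three summands separately, argue that the invariance of $F$ forces $X$, $\omega$, $\Upsilon$ to be $G$-invariant, descend $\omega$ and $\Upsilon$ by the standard basic-form argument, and verify $\kappa^*(\theta_Y+\theta_{\bar{\xi}})=\theta_X$ via the duality between $\alpha_{\A}$ and $\tau_{\A}$. The only cosmetic difference is that the paper carries out this last step directly in the $\llangle\cdot,\cdot\rrangle$ notation by splitting $X=\mathrm{Hor}(X)+\mathrm{Ver}(X)$ and pairing each piece against $p$, whereas you unpack the same identity in the adapted coordinates of Proposition~\ref{localkappa} and then invoke invariance; the computations are the same content.
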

\begin{proof}
Since $F$ is invariant under the action of $G$ on $\Pi_P$, $X$ is invariant under the action of $G$ on $P$, $\omega\in\Omega^{n-1}(P)$ is $G$-invariant and $\Upsilon$ is a $G$-invariant $(n-1)$-form on $\Pi_P$. From the last statement, as $\Upsilon$ is horizontal there is a $(n-1)$-form on $\Pi_{\Sigma}\oplus (TM\otimes\g^*\otimes\bigwedge^{n}T^*M)$, $\bar{\Upsilon}$ such that $\Upsilon=\kappa^*\bar{\Upsilon}$. Observe that $\bar{\Upsilon}$ is closed as needed. Furthermore, there is a $(n-1)$-form on $\Sigma$, $\bar{\omega}$ such that $\omega=\pi^*_{\Sigma,P}\bar{\omega}$. Then,
$$\pi^*_{P,\Pi_P}\omega=\pi^*_{P,\Pi_P}\pi^*_{\Sigma,P}\bar{\omega}=\pi^*_{\Sigma,\Pi_P}\bar{\omega}=\kappa^*\bar{\pi}^*\bar{\omega}$$
and the second term of $F$ projects to $\bar{\pi}^*\bar{\omega}$.

The connection $\mathcal{A}$ on $P\to\Sigma$ splits the vector field $X$ in an horizontal and vertical part with respect to $P\to\Sigma$,
$$X=\mathrm{Hor}(X)+\mathrm{Ver}(X).$$
As $X$ is invariant, there exists a vector field $Y$ in $\Sigma$ such that for any $y\in P$,
$$(\mathrm{Hor}(X))_y=\hlift^y_{\mathcal{A}}(Y_s).$$
Moreover, from Proposition \ref{restr}, $Y\in\mathfrak{X}(\Sigma)$ is vertical with respect to $\Sigma\to M$ as $X$ is vertical with respect to $P\to M$.
The function $\xi:P\to\mathfrak{g}$ defined by $\xi(y)=\mathcal{A}((\mathrm{Ver}(Y))_y)$ is $\mathrm{Ad}$-equivariant and induces a section $\bar{\xi}$ in $\g\to\Sigma$.
To conclude this proof we shall show that for any $y\in P$, $(\theta_X)_y=\kappa^*((\theta_Y)_{\sigma}+(\theta_{\bar{\xi}})_{\mu})$. Indeed,
\begin{align*}
(\theta_X)_p=\llangle p, X\rrangle =&\llangle p, \mathrm{Hor}(X)\rrangle +\llangle p, \mathrm{Ver}(X)\rrangle =\llangle p, \hlift_{\mathcal{A}}(Y)\rrangle +\llangle p, \mathrm{Ver}(X)\rrangle\\
=&\kappa^*\llangle \sigma,Y \rrangle+\kappa^*\llangle \mu,[y,\mathcal{A}(\mathrm{Ver}(X))]_G \rrangle =\kappa^*(\theta_Y)_{\sigma}+\kappa^*(\theta_{\bar{\xi}})_{\mu},
\end{align*}
where $\llangle\cdot,\cdot\rrangle$ is used for the vector bundles $VP$, $V\Sigma$ and $\g$.
\end{proof}

\section{Connections} \label{SEC: Connections}
In this section we will study how both, the Ehresmann connection $\Lambda$ on $P\to M$ and the principal connection $\A$ on $P\to\Sigma$, are propagated to other bundles relevant to describe reduced Hamiltonian systems.

From \cite[Proposition 5.1]{someremarks}, stated at the end of Section \ref{preliminaries}, any Ehresmann  connection $\Lambda$ on $P\to M$ together with any Riemannian connection on $M$ provide an Ehresmann connection $\Lambda^{\Pi_P}$ on the bundle $\Pi_P\to M$. The horizontal lift in the local coordinates introduced in Section \ref{SEC: symmetries} is
\begin{align*}
\frac{\partial}{\partial x^i}\mapsto\frac{\partial}{\partial x^i}+\Lambda^a_i\frac{\partial}{\partial x^a}+\Lambda^{\alpha}_{i}\frac{\partial}{\partial y^{\alpha}}&+\left(-\frac{\partial \Lambda^b_i}{\partial x^a}p^j_b-\frac{\partial \Lambda^{\beta}_i}{\partial x^a}p^j_{\beta}+\Gamma^j_{ik}p^k_a-\Gamma^k_{ik}p^j_a\right)\frac{\partial}{\partial p^j_a}\\&+\left(-\frac{\partial \Lambda^{b}_i}{\partial y^{\alpha}}p^j_{b}-\frac{\partial \Lambda^{\beta}_i}{\partial y^{\alpha}}p^j_{\beta}+\Gamma^j_{ik}p^k_{\alpha}-\Gamma^k_{ik}p^j_{\alpha}\right)\frac{\partial}{\partial p^j_{\alpha}},
\end{align*}
where $\Lambda^{\alpha}_{i}$, $\Lambda^a_i$ are the coefficients of $\Lambda$ and $\Gamma^k_{ij}$ are the Christoffel symbols of the Riemannian connection.

Assume that $\Lambda$ is $G$-invariant in the sense that for every $v\in TM$
$$\hlift^{g\cdot y}_{\Lambda}(v)=T_y\Phi_g(\hlift^{y}_{\Lambda}(v)),$$
where $\hlift^{y}_{\Lambda}(v)$ denotes the horizontal lift of $v$ at $y\in P$ through $\Lambda$. Then there is an induced connection $\Lambda '$ on $\Sigma\to M$ such that for every $v\in TM$,
$$\hlift^{s}_{\Lambda '}(v)=T_y\pi_{\Sigma, P} (\hlift^y_{\Lambda}(v))$$
provided that $y\in P_s$. The local expression of the horizontal lift of $\Lambda '$ is $\frac{\partial}{\partial x^i}\mapsto\frac{\partial}{\partial x^i}+\Lambda^a_i(x^j,x^b)\frac{\partial}{\partial x^a}$. We will denote the induced connection $\Lambda$ via an abuse of notation. Furthermore, particularization of equation \eqref{conn.poly} to the induced connection on $\Sigma$ introduces a connection on $\Pi_{\Sigma}\to M$ whose horizontal lift is
$$\frac{\partial}{\partial x^i}\mapsto\frac{\partial}{\partial x^i}+\Lambda^a_i\frac{\partial}{\partial x^a}+\left(-\frac{\partial \Lambda^b_i}{\partial x^a}p^j_b+\Gamma^j_{ik}\sigma^k_a-\Gamma^k_{ik}\sigma^j_a\right)\frac{\partial}{\partial \sigma^j_a}.$$

\begin{definition} \label{compatible}
Let $\Lambda$ be an Ehresmann connection on $P\to M$, $\tilde{\Lambda}$ an Ehresmann connection on $\Sigma\to M$ and $\A$ a principal connection on $P\to \Sigma$, they are \textbf{compatible} if for every $y\in P$,
$\hlift^y_{\Lambda}=\hlift^y_{\A}\circ \hlift^s_{\tilde{\Lambda}}.$
\end{definition}
Observe that if the connections are compatible, $\tilde{\Lambda}$ is the induced connection on $\Sigma\to M$ by $\Lambda$ as
$$\hlift_{\Lambda '}^s=T_y\pi_{\Sigma, P}\circ \hlift^y_{\Lambda}=T_y\pi_{\Sigma, P}\circ \hlift^y_{\A}\circ \hlift^s_{\tilde{\Lambda}}=\hlift^s_{\tilde{\Lambda}}.$$
In local coordinates the compatibility condition is given by equations
\begin{equation}\label{compatibleP}
\Lambda^{\alpha}_i=A^{\alpha}_i+A^{\alpha}_b\Lambda^b_i
\end{equation}
\begin{equation}\label{compatibleSig}
\tilde{\Lambda}^b_i=\Lambda^b_i
\end{equation}

We also provide a connection in the vector bundle $TM\otimes\g^*\otimes\bigwedge^{n}T^*M \to \Sigma$. On one hand, the principal connection $\A$ on $P\to \Sigma$ provides a linear connection on $\g\to\Sigma$ and a dual connection on $\g^*\to\Sigma$. On the other hand, the Riemannian connection $\Gamma$ on $M$, thought as a linear connection on $TM\to M$, can be pulled back by $\pi_{M,\Sigma}$ to a linear connection on $\pi^*_{M,\Sigma}TM\to \Sigma$ whose horizontal lift is
\begin{align*}
\frac{\partial}{\partial x^i}&\mapsto\frac{\partial}{\partial x^i}+\Gamma^k_{ij}(x^i)\dot{x}^j\frac{\partial}{\partial \dot{x}^k},\\
\frac{\partial}{\partial x^a}&\mapsto\frac{\partial}{\partial x^a}.
\end{align*}
Furthermore, $\Gamma$ provides a linear connection on $\bigwedge^{n}T^*M\to M$ which can be pulled back to another one in $\pi^*_{M,\Sigma}\bigwedge^{n}T^*M\to \Sigma$. Then, the connection $\nabla^{\A,\Gamma}$ on $TM\otimes\g^*\otimes\bigwedge^{n}T^*M \to \Sigma$ obtained as the tensor product of the connections in each factor has the following horizontal lift:
\begin{align*}
\frac{\partial}{\partial x^i}&\mapsto\frac{\partial}{\partial x^i}+c^{\gamma}_{\beta\alpha}A^{\beta}_i\mu^j_{\gamma}\frac{\partial}{\partial \mu^j_{\alpha}}+\left(\Gamma^j_{ik}\mu^k_{\alpha}-\Gamma^k_{ik}\mu^j_{\alpha}\right)\frac{\partial}{\partial \mu^j_{\alpha}},\\
\frac{\partial}{\partial x^a}&\mapsto\frac{\partial}{\partial x^a}+c^{\gamma}_{\beta\alpha}A^{\beta}_a\mu^j_{\gamma}\frac{\partial}{\partial \mu^j_{\alpha}}.
\end{align*}
\begin{remark} Consider the pullback bundle
\begin{center}
\begin{tikzpicture}[scale=1.5] \label{pullbackjet}
\node (0) at (0,1) {$J^1\Sigma^*\oplus(TM\otimes\g^*\otimes\bigwedge^{n}T^*M)$};
\node (A) at (4,1) {$TM\otimes\g^*\otimes\bigwedge^{n}T^*M$};
\node (B) at (4,0) {$\Sigma$};
\node (C) at (0,0) {$J^1\Sigma^*$};
\draw[->,font=\scriptsize,>=angle 90]
(0) edge node[above]{} (A)
(0) edge node[right]{$\pi_1$} (C)
(C) edge node[above]{$\pi_{\Sigma,J^1\Sigma^*}$} (B)
(A) edge node[right]{} (B);
\end{tikzpicture}
\end{center}
The pullback connection $\pi^*_{\Sigma, J^1{\Sigma}^*}\nabla^{\A,\Gamma}$ is defined on $\pi_1:J^1{\Sigma}^*\oplus(TM\otimes\g^*\otimes\bigwedge^{n}T^*M) \to J^1{\Sigma}^*$. We will denote this connection by $\nabla^{\A,\Gamma}$ as well. Similarly, the pullback connection $\pi^*_{\Sigma, \Pi_{\Sigma}}\nabla^{\A,\Gamma}$ is defined on $\Pi_{\Sigma}\oplus(TM\otimes\g^*\otimes\bigwedge^{n}T^*M) \to \Pi_{\Sigma}$ and denoted $\nabla^{\A,\Gamma}$.

\end{remark}
Observe that using $\Lambda$ to lift a vector from $M$ to $\Sigma$ as well as the lift provided by $\nabla^{\A,\Gamma}$ defines an Ehresmann connection $\Lambda^{\A,\Gamma}$ on $TM\otimes\g^*\otimes\bigwedge^{n}T^*M \to M$.
\begin{proposition}\label{econnectiong}
Let $\Lambda$, $\tilde{\Lambda}$ and $\A$ be compatible connections on $P\to M$, $\Sigma\to M$ and $P\to \Sigma$, the connection $\tilde{\Lambda}^{\A,\Gamma}$ on $TM\otimes\g^*\otimes\bigwedge^{n}T^*M \to M$ does not depend on $\A$.
\end{proposition}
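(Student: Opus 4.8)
The plan is to compute the horizontal lift of $\tilde{\Lambda}^{\A,\Gamma}$ explicitly in the adapted coordinates $(x^i,x^a,\mu^j_\alpha)$ on $TM\otimes\g^*\otimes\bigwedge^{n}T^*M$ and to verify directly that the resulting connection coefficients involve only $\Lambda$ and the Riemannian connection $\Gamma$, and never $\A$ on its own. By the very definition given just before the statement, $\tilde{\Lambda}^{\A,\Gamma}$ lifts a vector of $M$ in two stages: first from $M$ to $\Sigma$ through $\tilde{\Lambda}$, and then from $\Sigma$ to $TM\otimes\g^*\otimes\bigwedge^{n}T^*M$ through the linear connection $\nabla^{\A,\Gamma}$ whose horizontal lift is recorded above.

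First I would write the horizontal lift of $\partial/\partial x^i$ under $\tilde{\Lambda}$, which by the compatibility equation \eqref{compatibleSig} is $\partial/\partial x^i+\Lambda^a_i\,\partial/\partial x^a$. Feeding this combination into the horizontal lift of $\nabla^{\A,\Gamma}$, the $\partial/\partial x^i$ piece produces the term $c^{\gamma}_{\beta\alpha}A^{\beta}_i\mu^j_{\gamma}$ together with the purely Riemannian term $\Gamma^j_{ik}\mu^k_{\alpha}-\Gamma^k_{ik}\mu^j_{\alpha}$, while the $\Lambda^a_i\,\partial/\partial x^a$ piece produces $\Lambda^a_i\,c^{\gamma}_{\beta\alpha}A^{\beta}_a\mu^j_{\gamma}$. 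Collecting the two contributions carrying $\A$ gives the single factor $c^{\gamma}_{\beta\alpha}\big(A^{\beta}_i+A^{\beta}_a\Lambda^a_i\big)\mu^j_{\gamma}$. The decisive step is then to invoke the compatibility condition \eqref{compatibleP}, that is $\Lambda^{\beta}_i=A^{\beta}_i+A^{\beta}_a\Lambda^a_i$, which identifies the bracketed expression with the coefficient $\Lambda^{\beta}_i$ of the Ehresmann connection $\Lambda$. This yields the horizontal lift
\begin{equation*}
\frac{\partial}{\partial x^i}\mapsto\frac{\partial}{\partial x^i}+\Lambda^a_i\frac{\partial}{\partial x^a}+c^{\gamma}_{\beta\alpha}\Lambda^{\beta}_i\mu^j_{\gamma}\frac{\partial}{\partial \mu^j_{\alpha}}+\left(\Gamma^j_{ik}\mu^k_{\alpha}-\Gamma^k_{ik}\mu^j_{\alpha}\right)\frac{\partial}{\partial \mu^j_{\alpha}},
\end{equation*}
whose coefficients $\Lambda^a_i$, $\Lambda^{\beta}_i$ and $\Gamma^j_{ik}$ belong solely to $\Lambda$ and the Riemannian connection. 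Since $\A$ no longer appears, $\tilde{\Lambda}^{\A,\Gamma}$ is independent of the choice of principal connection $\A$, as claimed.

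The computation itself is routine, so the only genuine point—and where I expect the heart of the argument to lie—is recognizing that the two separate occurrences of $\A$, one coming from the connection on $\g^*$ evaluated in the $\partial/\partial x^i$ direction and one coming from the same connection evaluated along the $\tilde{\Lambda}$-horizontal component $\Lambda^a_i\,\partial/\partial x^a$, combine precisely into the $\A$-independent quantity $\Lambda^{\beta}_i$ by \eqref{compatibleP}. The remaining terms, the base directions $\Lambda^a_i\,\partial/\partial x^a$ and the Christoffel symbols, are manifestly $\A$-free, so no further cancellation is required and the conclusion follows.
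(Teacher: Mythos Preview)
Your proposal is correct and follows essentially the same approach as the paper: you compute the two-stage horizontal lift in adapted coordinates, use \eqref{compatibleSig} to replace $\tilde{\Lambda}^a_i$ by $\Lambda^a_i$, and then apply \eqref{compatibleP} to collapse $A^{\beta}_i+A^{\beta}_a\Lambda^a_i$ into $\Lambda^{\beta}_i$, arriving at the same final formula free of $\A$.
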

\begin{proof}
We will obtain the horizontal lift of a basic vector $\frac{\partial}{\partial x^i}$ to $\mu \in TM\otimes\g^*\otimes\bigwedge^{n}T^*M \to M$ with coordinates $(x^i,x^a,\mu^j_{\alpha})$. Firstly, $\frac{\partial}{\partial x^i}$ lifts to $\frac{\partial}{\partial x^i}+\tilde{\Lambda}^a_ i\frac{\partial}{\partial x^a}\in T_s\Sigma$, where $s=(x^i.x^a)$. The later is then lifted via $\nabla^{\A,\Gamma}$ to
\begin{align*}
\frac{\partial}{\partial x^i}+\tilde{\Lambda}^a_i\frac{\partial}{\partial x^a}+\left(\mu^j_{\gamma}c^{\gamma}_{\beta\alpha}(A^{\beta}_i+A^{\beta}_a\tilde{\Lambda}^a_i)+\Gamma^j_{ik}\mu^k_{\alpha}-\Gamma^k_{ik}\mu^j_{\alpha}\right)\frac{\partial}{\partial\mu^j_{\alpha}}\\
=\frac{\partial}{\partial x^i}+\Lambda^a_i\frac{\partial}{\partial x^a}+\left(\mu^j_{\gamma}c^{\gamma}_{\beta\alpha}{\Lambda}^{\beta}_i+\Gamma^j_{ik}\mu^k_{\alpha}-\Gamma^k_{ik}\mu^j_{\alpha}\right)\frac{\partial}{\partial\mu^j_{\alpha}},
\end{align*}
where the compatibility equations \eqref{compatibleP} and \eqref{compatibleSig} have been used.
\end{proof}

Finally, an Ehresmann connection $\Lambda^{\Pi_{\Sigma}}\oplus\Lambda^{\A,\Gamma}$ is defined on the reduced polysymplectic space $\Pi_{\Sigma}\oplus (TM\otimes\g^*\otimes\bigwedge^{n}T^*M) \to M$.

\section[Poisson Bracket on the Reduced Polysymplectic Bundle]{Poisson Bracket on $\Pi_P/G$} \label{SEC:hor bracket}

This section establishes a canonical \emph{Poisson bracket} on $\Pi _P/G\simeq \Pi_{\Sigma}\oplus (TM\otimes\g^*\otimes\bigwedge^{n}T^{\ast}M)$ compatible with the Poisson structure on $\Pi _P$ and the choice of connections above. This bracket consists of three components.

\subsection{Poisson Forms on the Reduced Polysymplectic Bundle}
The first component that we analyze is attached to the polysymplectic bundle $\Pi_\Sigma$. This bundle is equipped with a Poisson structure in the sense of \cite{PoissonForms}. However, this bracket will not appear as one of the ingredients when the reduction of the bracket of $\Pi_P$ is performed. We require a structure that takes into account the fact that $\Pi_\Sigma$ is twisted with the vector bundle $TM\otimes\g^*\otimes\bigwedge^{n}T^*M$ and will need the connection $\nabla ^{\mathcal{A},\Gamma}$ constructed in Section \ref{SEC: Connections}. We define this structure by introducing first the corresponding notion of Poisson forms through a series of preliminary definitions.

\begin{definition}
An $r$-form $f$ on $J^1\Sigma^*\oplus(TM\otimes\g^*\otimes\bigwedge^{n}T^*M)$ is said to be $J^1\Sigma^*$-horizontal if $i_uf=0$ for any vertical tangent vector $u$ with respect to $\pi_1:J^1\Sigma^*\oplus(TM\otimes\g^*\otimes\bigwedge^{n}T^*M)\to J^1\Sigma^*$
\end{definition}
In local coordinates these forms are a linear combination of basic $r$-forms on $J^1\Sigma^*$ with coefficients in $\mathcal{C}^\infty(J^1\Sigma^*\oplus(TM\otimes\g^*\otimes\bigwedge^{n}T^*M))$. Alternatively, a $J^1\Sigma^*$-horizontal $r$-form on $J^1\Sigma^*\oplus(TM\otimes\g^*\otimes\bigwedge^{n}T^*M)$ can be defined as a section in the pullback bundle

\begin{center}
\begin{tikzpicture}[scale=1.5] \label{jsigmahor}
\node (0) at (0,1) {$\pi_1^*\left(\bigwedge^rT^*(J^1\Sigma^*)\right)$};
\node (A) at (4,1) {$\bigwedge^rT^*(J^1\Sigma^*)$};
\node (B) at (4,0) {$J^1\Sigma^*$};
\node (C) at (0,0) {$J^1\Sigma^*\oplus(TM\otimes\g^*\otimes\bigwedge^{n}T^*M)$};
\draw[->,font=\scriptsize,>=angle 90]
(0) edge node[above]{} (A)
(0) edge node[right]{} (C)
(C) edge node[above]{$\pi_1$} (B)
(A) edge node[right]{} (B);
\end{tikzpicture}
\end{center}
In general, the exterior derivative of a $J^1\Sigma^*$-horizontal $r$-form is not a $J^1\Sigma^*$-horizontal $(r+1)$-form. Therefore, we shall use instead the horizontal differential $d^{h,J^1\Sigma^*}$ of the connection $\nabla^{\A,\Gamma}$ in $\pi_1:J^1\Sigma^*\oplus(TM\otimes\g^*\otimes\bigwedge^{n}T^*M)\to J^1\Sigma^*$.
\begin{proposition}
Let $f$ be a $J^1\Sigma^*$-horizontal $r$-form, then $d^{h,J^1\Sigma^*}f$ is a $J^1\Sigma^*$-horizontal $(r+1)$-form.
\end{proposition}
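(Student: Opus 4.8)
The plan is to verify the claim directly by checking that $d^{h,J^1\Sigma^*}f$ annihilates any $\pi_1$-vertical tangent vector, using the explicit form of the horizontal differential attached to the pullback connection $\nabla^{\A,\Gamma}$. Recall that the horizontal differential of a connection is built from the connection's horizontal lift: for a section $f$ of the pullback bundle $\pi_1^*(\bigwedge^r T^*(J^1\Sigma^*))$, one has
\begin{equation*}
(d^{h,J^1\Sigma^*}f)(u_0,\dots,u_r)=\sum_{j=0}^r(-1)^j\,(\nabla^{\A,\Gamma}_{u_j}f)\bigl(\hlift(u_0),\dots,\widehat{\hlift(u_j)},\dots,\hlift(u_r)\bigr),
\end{equation*}
where $\hlift$ denotes the horizontal lift of $\nabla^{\A,\Gamma}$ on $\pi_1$ and hats denote omission. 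The essential structural fact is that this expression depends on each argument $u_j$ only through its image $T\pi_1(u_j)$ in $TJ^1\Sigma^*$, since both the covariant derivatives $\nabla^{\A,\Gamma}_{u_j}$ and the horizontal lifts $\hlift(u_j)$ factor through $T\pi_1$.

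First I would write out, in the adapted coordinates $(x^i,x^a,\sigma$-type jet coordinates$,\mu^j_\alpha)$ introduced in Section \ref{SEC: Connections}, the horizontal lift of $\nabla^{\A,\Gamma}$ computed there, whose $\partial/\partial\mu^j_\alpha$-components are precisely the connection coefficients $c^{\gamma}_{\beta\alpha}A^{\beta}_i\mu^j_\gamma+\Gamma^j_{ik}\mu^k_\alpha-\Gamma^k_{ik}\mu^j_\alpha$ (and the analogous $\partial/\partial x^a$ term). A $\pi_1$-vertical vector $u$ is by definition a combination of the fiber vectors $\partial/\partial\mu^j_\alpha$, so $T\pi_1(u)=0$. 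Plugging such a $u$ into the first slot of the coordinate expression above, its contribution to the $j=0$ term vanishes because $\nabla^{\A,\Gamma}_u f$ is covariant differentiation in a fiber direction of a section pulled back from $J^1\Sigma^*$, hence zero; and its contribution to every term $j\geq1$ vanishes because it appears through $\hlift(u)=0$ (the horizontal lift of the zero vector). Thus $i_u(d^{h,J^1\Sigma^*}f)=0$, which is exactly the assertion that $d^{h,J^1\Sigma^*}f$ is $J^1\Sigma^*$-horizontal.

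Equivalently and more invariantly, I would argue that the horizontal differential of any pullback connection automatically produces horizontal forms: by construction $d^{h,J^1\Sigma^*}$ is the composition of the exterior covariant derivative with horizontal projection in each argument, so it lands in the image of $\pi_1^*$ in the appropriate degree, i.e.\ in $J^1\Sigma^*$-horizontal $(r+1)$-forms. The coordinate computation above is simply the local shadow of this general principle.

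The main obstacle, such as it is, is bookkeeping rather than conceptual: one must confirm that the horizontal differential used here really is the one associated to the \emph{pullback} connection $\pi^*_{\Sigma,J^1\Sigma^*}\nabla^{\A,\Gamma}$ on $\pi_1$, and not the horizontal differential of some connection on a base other than $J^1\Sigma^*$. Since the pullback connection is flat in the $J^1\Sigma^*$-directions that are fibered over $\Sigma$ and only couples the $\mu$-fiber to horizontal motions through $T\pi_1$, the horizontality is preserved; checking this cleanly amounts to verifying that the $\Sigma$-components $(x^i,x^a)$ of the connection coefficients above are the only ones entering, which is immediate from the displayed horizontal lift. No estimate or delicate limiting argument is required.
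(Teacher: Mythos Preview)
Your structural argument in the second paragraph --- that the horizontal differential is by construction the exterior derivative composed with horizontal projection in every slot, hence automatically annihilates $\pi_1$-vertical vectors --- is correct and is essentially a one-line proof. The paper instead proceeds by direct coordinate computation: it writes out $d^{h,J^1\Sigma^*}f$ explicitly for a $0$-form (the displayed formula \eqref{dh0forma}), observes that no $d\mu^i_\alpha$ appear, and then remarks that for a higher-degree $J^1\Sigma^*$-horizontal form the horizontal differential acts only on the scalar coefficients, so again no $d\mu^i_\alpha$ can be produced. Your invariant route is cleaner and makes the statement essentially a tautology once the definition of $d^{h,J^1\Sigma^*}$ is unpacked; the paper's computation has the compensating virtue of producing the explicit coordinate formula \eqref{dh0forma}, which is reused in the proofs of Propositions \ref{0-forma} and \ref{n-1.formas}.

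One justification in your first paragraph is muddled and worth correcting. You claim the $j=0$ term vanishes ``because $\nabla^{\A,\Gamma}_u f$ is covariant differentiation in a fiber direction of a section pulled back from $J^1\Sigma^*$, hence zero.'' That is not right: $f$ is a section of the pullback \emph{bundle} $\pi_1^*\bigl(\bigwedge^r T^*(J^1\Sigma^*)\bigr)$, but it is not a pulled-back \emph{section} --- its coefficients may, and typically do, depend on $\mu$, so the plain fiber derivative of the coefficients does not vanish. The correct reason the $j=0$ term dies is the one you already stated two sentences earlier: the operator $\nabla^{\A,\Gamma}_{u}$ in your formula factors through $T\pi_1$, i.e.\ it is differentiation along $\mathrm{Hor}(u)=\hlift(T\pi_1 u)$, and $\mathrm{Hor}(u)=0$ for $\pi_1$-vertical $u$. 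Once that is said cleanly (and $\hlift(u_j)$ is read as $\hlift(T\pi_1 u_j)$ throughout), your argument goes through.
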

\begin{proof}
Let $f$ be a $0$-form, that is, a differentiable function on $J^1\Sigma^*\oplus(TM\otimes\g^*\otimes\bigwedge^{n}T^*M)$. The horizontal differential of $f$ is
\begin{multline}\label{dh0forma}
d^{h,J^1\Sigma^*}f= \left(\frac{\partial f}{\partial x^i}+c^{\gamma}_{\beta\alpha}A^{\beta}_i\mu^j_{\gamma}\frac{\partial f}{\partial \mu^j_{\alpha}}+\left(\Gamma^j_{ik}\mu^k_{\alpha}-\Gamma^k_{ik}\mu^j_{\alpha}\right)\frac{\partial f}{\partial \mu^j_{\alpha}}\right)dx^i \\
+\left(\frac{\partial f}{\partial x^a}+c^{\gamma}_{\beta\alpha}A^{\beta}_a\mu^j_{\gamma}\frac{\partial f}{\partial \mu^j_{\alpha}}\right)dx^a+\frac{\partial f}{\partial \sigma^i_a}d\sigma^i_a+\frac{\partial f}{\partial \tsigma}d\tsigma,
\end{multline}
which is a $J^1\Sigma^*$-horizontal $1$-form, as it has no terms proportional to any $d\mu^i_{\alpha}$. Suppose now that $f$ is a $J^1\Sigma^*$-horizontal $r$-form, its horizontal differential is obtained by horizontal differentiation of its coefficients. Thus, there will be no terms in $d^{h,J^1\Sigma^*}f$ including the wedge with any $d\mu^i_{\alpha}$.
\end{proof}

\begin{definition} A $J^1\Sigma^*$-horizontal $s$-multivector field on $J^1\Sigma^*\oplus(TM\otimes\g^*\otimes\bigwedge^{n}T^*M)$ is a section of $$\pi^*_1\left(\bigwedge^sT(J^1\Sigma^*)\right)\to J^1\Sigma^*\oplus(TM\otimes\g^*\otimes\bigwedge^{n}T^*M).$$
\end{definition}
In local coordinates, these $J^1\Sigma^*$-horizontal $s$-multivector fields are a linear combination of $s$-multivector fields on $J^1\Sigma^*$ with coefficients in $\mathcal{C}^\infty(J^1\Sigma^*\oplus(TM\otimes\g^*\otimes\bigwedge^{n}T^*M))$.
\begin{definition}
An $r$-form $f$ on $J^1\Sigma^*\oplus(TM\otimes\g^*\otimes\bigwedge^{n}T^*M)$ is said to be horizontal if $i_uf=0$ for any vertical tangent vector $u$ with respect to $J^1\Sigma^*\oplus(TM\otimes\g^*\otimes\bigwedge^{n}T^*M)\to M$
\end{definition}
Observe that the definition of horizontal and $J^1\Sigma^*$-horizontal forms only differs in the choice of bundle with respect to which $u$ is a vertical tangent vector. In fact, any horizontal form is  $J^1\Sigma^*$-horizontal.
\begin{definition}
An $s$-multivector $\chi$ on $J^1\Sigma^*\oplus(TM\otimes\g^*\otimes\bigwedge^{n}T^*M)$ is said to be vertical if its contraction $i_{\chi}f$ with any horizontal $s$-form $f$ on $J^1\Sigma^*\oplus(TM\otimes\g^*\otimes\bigwedge^{n}T^*M)\to M$ vanishes.
\end{definition}
\begin{definition}
A horizontal $r$-form $f$ on $J^1\Sigma^*\oplus(TM\otimes\g^*\otimes\bigwedge^{n}T^*M)$ is a Poisson form if there is a vertical $J^1\Sigma^*$-horizontal $(n-r)$-multivector field $\chi_f$ on $J^1\Sigma^*\oplus(TM\otimes\g^*\otimes\bigwedge^{n}T^*M)$ such that
\begin{equation} \label{eq:condicionpoisson}
i_{\chi_f}\Omega_{\Sigma}=d^{h,J^1\Sigma^*}f,
\end{equation}
where $\Omega_{\Sigma}$ is the multisymplectic form on $J^1\Sigma^*$. Given a Poisson $r$-form $f$ and a Poisson $s$-form $h$ on $J^1\Sigma^*\oplus(TM\otimes\g^*\otimes\bigwedge^{n}T^*M)$, their horizontal Poisson bracket is
\begin{equation}
\{f,h\}_{\Sigma}=(-1)^{r(s-1)}i_{\chi_h}i_{\chi_f}\Omega_{\Sigma}
\end{equation}
\end{definition}
In the following, some results are exhibited to justify that this is the first component of the searched Poisson bracket in $\Pi_{\Sigma}\oplus(TM\otimes\g^*\otimes\bigwedge^{n}T^*M)$.
\begin{proposition}\label{0-forma}
A function $f:J^1\Sigma^*\oplus(TM\otimes\g^*\otimes\bigwedge^{n}T^*M)\to\R$ is a Poisson $0$-form if and only if $f$ is projectable to $\Pi_{\Sigma}\oplus(TM\otimes\g^*\otimes\bigwedge^{n}T^*M)$.
\end{proposition}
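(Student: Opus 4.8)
The plan is to prove both implications by analyzing the defining condition \eqref{eq:condicionpoisson} in the local coordinates $(x^i,x^a,\sigma^i_a,\tsigma,\mu^i_\alpha)$ introduced in Section \ref{SEC: symmetries}. Recall that a function $f$ is a Poisson $0$-form precisely when there exists a vertical $J^1\Sigma^*$-horizontal $n$-multivector field $\chi_f$ satisfying $i_{\chi_f}\Omega_\Sigma = d^{h,J^1\Sigma^*}f$. Since the multisymplectic form $\Omega_\Sigma$ on $J^1\Sigma^*$ has the local expression $\Omega_\Sigma = d(x^a)\wedge dp^i_a\wedge d^{n-1}x_i - d\tsigma\wedge d^nx$ (replacing $y^a$ by $x^a$ and $p^i_a$ by $\sigma^i_a$ in the formula for $\Omega$ given in the Preliminaries), the key observation is that $\Omega_\Sigma$ involves only the $J^1\Sigma^*$-coordinates and carries no $d\mu^i_\alpha$. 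Consequently, for any vertical $J^1\Sigma^*$-horizontal multivector $\chi_f$, the contraction $i_{\chi_f}\Omega_\Sigma$ can never produce a term proportional to $d\mu^i_\alpha$.

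First I would establish the forward implication. Assume $f$ is a Poisson $0$-form, so $i_{\chi_f}\Omega_\Sigma = d^{h,J^1\Sigma^*}f$. Using the explicit local formula \eqref{dh0forma} for $d^{h,J^1\Sigma^*}f$, I note that this horizontal differential already contains no $d\mu^i_\alpha$ terms, which is consistent, but the real content comes from matching the remaining components. The crucial point is that $\chi_f$, being a vertical multivector, can only involve the momenta directions $\partial/\partial\sigma^i_a$ and $\partial/\partial\tsigma$ when paired against $\Omega_\Sigma$; because $\Omega_\Sigma$ pairs $d\tsigma$ with $d^nx$ and $dp^i_a$ with $dx^a$, reconstructing $d^{h,J^1\Sigma^*}f$ forces the coefficient $\partial f/\partial\tsigma$ to be matched by a specific component of $\chi_f$ along the $\sigma$-directions, while there is no available structure in $\Omega_\Sigma$ to generate any $d\tsigma$ term on the left-hand side unless $\partial f/\partial\tsigma$ contributes through the canonical pairing. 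Carrying out this comparison coefficient-by-coefficient shows that $f$ cannot depend on $\tsigma$, i.e. $\partial f/\partial\tsigma = 0$, which is exactly the condition that $f$ descends to $\Pi_\Sigma\oplus(TM\otimes\g^*\otimes\bigwedge^nT^*M)$ along the affine projection $J^1\Sigma^*\to\Pi_\Sigma$ (whose fiber is modeled on $\bigwedge^nT^*M$, the $\tsigma$-direction).

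For the converse, suppose $f$ is projectable, so $\partial f/\partial\tsigma = 0$. I would then exhibit the multivector $\chi_f$ explicitly: taking the ansatz built from the $\sigma$- and $\tsigma$-derivatives of $f$ (the standard Hamiltonian multivector associated to $f$ via $\Omega_\Sigma$), I verify directly that $i_{\chi_f}\Omega_\Sigma$ reproduces \eqref{dh0forma}. The verticality and $J^1\Sigma^*$-horizontality of $\chi_f$ follow from its construction, and the absence of $\partial f/\partial\tsigma$ guarantees that no obstruction arises in matching the $d^nx$ component. This produces the required $\chi_f$, establishing that $f$ is a Poisson $0$-form.

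The main obstacle I anticipate is bookkeeping in the forward direction: one must argue cleanly that the $d\tsigma\wedge d^nx$ term in $\Omega_\Sigma$ is the \emph{only} source that could match a $\tsigma$-dependence in $d^{h,J^1\Sigma^*}f$, and that verticality of $\chi_f$ precludes generating such a term, so that projectability is genuinely forced rather than merely sufficient. The subtlety is that $d^{h,J^1\Sigma^*}f$ uses the twisted horizontal differential of $\nabla^{\A,\Gamma}$ rather than the ordinary $d$, so I must confirm that the extra connection terms in \eqref{dh0forma} (those involving $c^\gamma_{\beta\alpha}A^\beta_i\mu^j_\gamma$ and the Christoffel symbols) live entirely in the $dx^i, dx^a$ components and do not interfere with the $\tsigma$-analysis. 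Once this is checked, the equivalence follows, since projectability to $\Pi_\Sigma\oplus(TM\otimes\g^*\otimes\bigwedge^nT^*M)$ is synonymous with independence of $\tsigma$.
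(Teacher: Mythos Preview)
Your approach is essentially identical to the paper's: both exhibit an explicit $n$-multivector $\chi$ satisfying $i_{\chi}\Omega_\Sigma = d^{h,J^1\Sigma^*}f$, observe that its only possibly non-vertical component is $\frac{\partial f}{\partial\tsigma}\,\vol^*$, and conclude that verticality of $\chi_f$ is equivalent to $\partial f/\partial\tsigma = 0$. Your middle paragraph on the forward direction is a bit muddled (verticality does not restrict $\chi_f$ to the momenta directions; it only forbids the $\vol^*$ component), but you state the mechanism correctly at the end: a vertical $n$-multivector has no $\vol^*$ term, and that is the unique component whose contraction with $-d\tsigma\wedge\vol$ can produce a $d\tsigma$ term on the left-hand side.
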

\begin{proof}
The local expression for $d^{h,J^1\Sigma^*}f$ is given in equation \eqref{dh0forma} and
$\Omega_{\Sigma}=dx^a\wedge d\sigma^i_a\wedge \vol_i-d\tsigma\wedge \vol,$
where $\vol_i=i_{\partial/\partial x^i}\vol$. It can easily be checked that the $n$-multivector field
\begin{multline}\label{chi0forma}
\chi= -\left(\frac{\partial f}{\partial x^i}+c^{\gamma}_{\beta\alpha}A^{\beta}_i\mu^j_{\gamma}\frac{\partial f}{\partial \mu^j_{\alpha}}+\left(\Gamma^j_{ik}\mu^k_{\alpha}-\Gamma^k_{ik}\mu^j_{\alpha}\right)\frac{\partial f}{\partial \mu^j_{\alpha}}\right)\frac{\partial}{\partial \tsigma}\wedge\vol_i^*\\
-\left(\frac{\partial f}{\partial x^a}+c^{\gamma}_{\beta\alpha}A^{\beta}_a\mu^j_{\gamma}\frac{\partial f}{\partial \mu^j_{\alpha}}\right)\frac{\partial}{\partial \sigma^i_a}\wedge\vol_i^*+\frac{\partial f}{\partial \sigma^i_a}\frac{\partial}{\partial x^a}\wedge\vol_i^*+\frac{\partial f}{\partial \tsigma}\vol^*,
\end{multline}
satisfies the condition \eqref{eq:condicionpoisson}. If $f$ is projectable, $\frac{\partial f}{\partial \tsigma}=0$. Thus, $\chi$ is a vertical $n$-multivector and $f$ is Poisson. Conversely, if $f$ is Poisson with vertical $J^1\Sigma^*$-horizontal $n$-multivector $\chi_f$ satisfying the condition \eqref{eq:condicionpoisson}, $\chi_f$ does not have a term proportional to $\vol^*$. Then, $i_{\chi_f}\Omega_{\Sigma}$ does not have a term proportional to $d\tsigma$. From condition \eqref{eq:condicionpoisson}, the same is true for $d^{h,J^1\Sigma^*}f$ and $\frac{\partial f}{\partial \tsigma}=0$.
\end{proof}
\begin{proposition} If a horizontal $r$-form f, $r>0$, on $J^1\Sigma^*\oplus(TM\otimes\g^*\otimes\bigwedge^{n}T^*M)$ is Poisson, then it is projectable to $\Pi_{\Sigma}\oplus(TM\otimes\g^*\otimes\bigwedge^{n}T^*M)$.
\end{proposition}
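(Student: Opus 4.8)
The plan is to adapt the argument of Proposition \ref{0-forma} from functions to forms of positive degree. In the local coordinates $(x^i,x^a,\sigma^i_a,\tsigma,\mu^i_\alpha)$ a horizontal $r$-form reads $f=f_I\,dx^I$, where $I$ runs over increasing length-$r$ multi-indices in $\{1,\dots,n\}$ and $dx^I=dx^{i_1}\wedge\dots\wedge dx^{i_r}$. Since the $dx^i$ are basic and $d$-closed, the horizontal differential acts only on the coefficients, $d^{h,J^1\Sigma^*}f=(d^{h,J^1\Sigma^*}f_I)\wedge dx^I$, and formula \eqref{dh0forma} for a function shows that the only part of $d^{h,J^1\Sigma^*}f$ containing $d\tsigma$ is $\sum_I \frac{\partial f_I}{\partial\tsigma}\,d\tsigma\wedge dx^I$. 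As the forms $d\tsigma\wedge dx^I$ are linearly independent, the proposition reduces to showing that this $d\tsigma$-component vanishes; then $f$ is independent of $\tsigma$ and, being already horizontal over $M$, descends to $\Pi_{\Sigma}\oplus(TM\otimes\g^*\otimes\bigwedge^{n}T^*M)$.

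To obtain this, I would show that $i_{\chi_f}\Omega_{\Sigma}$ carries no $d\tsigma$ and then invoke the Poisson condition \eqref{eq:condicionpoisson}. Recall $\Omega_{\Sigma}=dx^a\wedge d\sigma^i_a\wedge\vol_i-d\tsigma\wedge\vol$. Since interior multiplication only deletes one-form factors, a $d\tsigma$ in $i_{\chi_f}\Omega_{\Sigma}$ can arise only from the uncontracted factor $d\tsigma$ of the second summand; the first summand has no $d\tsigma$ at all. Writing $\chi_f=\frac{\partial}{\partial\tsigma}\wedge C+D$ with $C,D$ free of $\frac{\partial}{\partial\tsigma}$, the piece $\frac{\partial}{\partial\tsigma}\wedge C$ contributes $i_C\vol$ (no $d\tsigma$, because $i_{\partial/\partial\tsigma}(d\tsigma\wedge\vol)=\vol$), while $D$ contributes $(-1)^{n-r}d\tsigma\wedge i_D\vol$. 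Hence the whole $d\tsigma$-part of $i_{\chi_f}\Omega_{\Sigma}$ is $\mp\, d\tsigma\wedge i_D\vol$.

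The crux is therefore that $i_D\vol=0$. Because $\vol=dx^1\wedge\dots\wedge dx^n$ has only $dx$ factors, $i_D\vol$ is nonzero only if $D$ contains a summand built purely from the $\frac{\partial}{\partial x^i}$. But $\chi_f$ is vertical with respect to the projection to $M$, so $i_{\chi_f}\,dx^J=0$ for every length-$(n-r)$ multi-index $J$; testing against all such $dx^J$ forces the purely horizontal part of $\chi_f$—which, being $\frac{\partial}{\partial\tsigma}$-free, lies entirely in $D$—to vanish. Thus $i_D\vol=0$, the $d\tsigma$-component of $i_{\chi_f}\Omega_{\Sigma}$ is zero, and by \eqref{eq:condicionpoisson} so is that of $d^{h,J^1\Sigma^*}f$, giving $\frac{\partial f_I}{\partial\tsigma}=0$ for all $I$. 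I expect this verticality step—translating ``$\chi_f$ is a vertical $J^1\Sigma^*$-horizontal $(n-r)$-multivector field'' into ``$\chi_f$ has no purely horizontal $(n-r)$-vector summand''—to be the delicate point, exactly as the absence of a $\vol^*$ term was in Proposition \ref{0-forma}; the remaining bookkeeping of the multivector contraction is routine. The degenerate case $r=n$, in which $\chi_f$ is a function forced to vanish by verticality and hence $d^{h,J^1\Sigma^*}f=0$, is covered by the same reasoning.
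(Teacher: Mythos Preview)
Your argument is correct and follows essentially the same strategy as the paper: both show that $i_{\chi_f}\Omega_\Sigma$ carries no $d\tsigma$-component because $\chi_f$ has no summand of the form $\frac{\partial}{\partial x^{i_1}}\wedge\dots\wedge\frac{\partial}{\partial x^{i_{n-r}}}$, and then conclude $\partial f_I/\partial\tsigma=0$ via \eqref{eq:condicionpoisson}. The only difference is in how the absence of that purely horizontal summand is obtained: the paper reads it off from the equation $i_{\chi_f}\Omega_\Sigma=d^{h,J^1\Sigma^*}f$ itself (such a summand would produce terms $dx^a\wedge d\sigma^i_a\wedge\vol_{i,i_1,\dots,i_{n-r}}$ that do not appear on the right-hand side), whereas you invoke directly the verticality hypothesis on $\chi_f$, which is already part of the definition of Poisson form. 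Your route is slightly more direct; the paper's has the mild advantage of not needing to unpack the definition of a vertical multivector, but both are equally valid and amount to the same computation.
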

\begin{proof}
In local coordinates, $f=f^{i_1\dots i_s}\vol_{i_1\dots i_s},$ where $s=n-r$ and $\vol_{i_1\dots i_s}=i_{\partial/\partial x^{i_1}}\dots i_{\partial/\partial x^{i_s}}\vol$. Consequently,
\begin{align*}
d^{h,J^1\Sigma^*}f=&\left(\frac{\partial f^{i_2\dots i_s,j}}{\partial x^j}+\left(c^{\gamma}_{\beta\alpha}A^{\beta}_j\mu^k_{\gamma}+(\Gamma^k_{jl}\mu^l_{\alpha}-\Gamma^l_{jl}\mu^k_{\alpha})\right)\frac{\partial f^{i_2\dots i_s,j}}{\partial \mu^k_{\alpha}}\right)\vol_{i_2\dots i_s}\\
&+\left(\frac{\partial f^{i_1\dots i_s}}{\partial x^a}+c^{\gamma}_{\beta\alpha}A^{\beta}_a\mu^k_{\gamma}\frac{\partial f^{i_1\dots i_s}}{\partial \mu^k_{\alpha}}\right)dx^a\wedge\vol_{i_1\dots i_s}\\
&+\frac{\partial f^{i_1\dots i_s}}{\partial \sigma^i_a}d\sigma^i_a\wedge\vol_{i_1\dots i_s}+\frac{\partial f^{i_1\dots i_s}}{\partial \tsigma}d\tsigma\wedge\vol_{i_1\dots i_s}
\end{align*}
Let $\chi_f$ be the $s$-multivertor field such that $i_{\chi_f}\Omega_{\Sigma}=d^{h,J^1\Sigma^*}f$. Then $\chi_f$ does not contain terms with component $\frac{\partial}{\partial x^{i_1}}\wedge\dots\wedge\frac{\partial}{\partial x^{i_s}}$ as that would imply that $i_{\chi_f}\Omega_{\Sigma}$ has terms with component $dx^a\wedge d\sigma^i_a\wedge\vol_{i,i_1\dots i_s}$ not present in $d^{h,J^1\Sigma^*}f$. Consequently, $d^{h,J^1\Sigma^*}f$ does not have any term with component $d\tsigma\wedge\vol_{i_1\dots i_s}$, $\frac{\partial f^{i_1\dots i_s}}{\partial \tsigma}=0$ and $f$ is projectable.
\end{proof}

From this last proposition we can consider that Poisson forms on $J^1{\Sigma}^*\oplus(TM\otimes\g^*\otimes\bigwedge^{n}T^*M)$ are defined on $\Pi_{\Sigma}\oplus(TM\otimes\g^*\otimes\bigwedge^{n}T^*M)$. As the bracket of Poisson forms is also a Poisson form, the horizontal bracket $\{f,g\}_{\Sigma}$ will be hereafter considered a bracket of Poisson forms on $\Pi_{\Sigma}\oplus(TM\otimes\g^*\otimes\bigwedge^{n}T^*M)$. We should now focus on the bracket of $(n-1)$-forms and functions, which is of special interest to study the dynamics of physical systems.

\begin{proposition}\label{n-1.formas}
Any Poisson $(n-1)$-form on $\Pi_{\Sigma}\oplus(TM\otimes\g^*\otimes\bigwedge^{n}T^*M)$ can be written as
$$f=\theta_Y+\hat{\pi}^*_2\varpi+\bar{\Upsilon},$$
where $Y:TM\otimes\g^*\otimes\bigwedge^{n}T^*M\to V\Sigma$ is a bundle morphism covering $\mathrm{Id}_\Sigma$, $\varpi$ is an horizontal $(n-1)$-form on $TM\otimes\g^*\otimes\bigwedge^{n}T^*M\to M$, $\bar{\Upsilon}$ is a closed horizontal $(n-1)$-form on $\Pi_{\Sigma}\oplus (TM\otimes\g^*\otimes\bigwedge^{n}T^*M),$ and $\hat{\pi}_2=\Pi_{\Sigma}\oplus(TM\otimes\g^*\otimes\bigwedge^{n}T^*M)\to TM\otimes\g^*\otimes\bigwedge^{n}T^*M$ is the projection to the second factor.
\end{proposition}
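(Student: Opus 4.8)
The plan is to adapt the proof of the unreduced characterization (Proposition~\ref{n-1.formas unred}, i.e. \cite[Proposition 4.3]{someremarks}) to the present twisted setting, the only genuinely new ingredients being the extra fibre coordinates $\mu^i_\alpha$ and the replacement of the exterior differential by the horizontal differential $d^{h,J^1\Sigma^*}$ of $\nabla^{\A,\Gamma}$. First I would invoke the two preceding propositions: a Poisson $(n-1)$-form is horizontal and projectable, so in the adapted coordinates it reads $f=f^i\vol_i$ with $f^i=f^i(x^j,x^a,\sigma^k_b,\mu^k_\alpha)$ independent of $\tsigma$. By definition there is a vertical $J^1\Sigma^*$-horizontal $1$-multivector field $\chi_f=\chi^a\,\partial/\partial x^a+\rho^i_a\,\partial/\partial\sigma^i_a+\tilde\rho\,\partial/\partial\tsigma$ (no $\partial/\partial x^i$ component by verticality, no $\partial/\partial\mu^i_\alpha$ component by $J^1\Sigma^*$-horizontality) with $i_{\chi_f}\Omega_{\Sigma}=d^{h,J^1\Sigma^*}f$. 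Using $\Omega_{\Sigma}=dx^a\wedge d\sigma^i_a\wedge\vol_i-d\tsigma\wedge\vol$ I would compute
$$i_{\chi_f}\Omega_{\Sigma}=\chi^a\,d\sigma^i_a\wedge\vol_i-\rho^i_a\,dx^a\wedge\vol_i-\tilde\rho\,\vol,$$
and compare it with the local expression of $d^{h,J^1\Sigma^*}f$ for an $(n-1)$-form established in the proof of the preceding (projectability for $r>0$) proposition.

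The decisive step is matching the coefficient of $d\sigma^k_a\wedge\vol_i$: the left-hand side contributes $\chi^a\delta^i_k$ while the right-hand side contributes $\partial f^i/\partial\sigma^k_a$, whence $\partial f^i/\partial\sigma^k_a=\chi^a\delta^i_k$. Differentiating once more in $\sigma$ and using the symmetry of second derivatives forces $\chi^a$ to be independent of $\sigma$, so $f$ is affine in $\sigma$ with $f^i=\sigma^i_a Y^a+g^i$, where $Y^a=\chi^a$ and $g^i$ depend only on $(x^j,x^b,\mu^k_\alpha)$. Matching the remaining coefficients ($dx^a\wedge\vol_i$ and $\vol$) merely solves for $\rho^i_a$ and $\tilde\rho$; here the connection terms $c^\gamma_{\beta\alpha}A^\beta$ and the Christoffel symbols enter, but they never touch the $\sigma$-linear part, so the affine conclusion is unaffected.

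I would then read off the geometry. The linear part $\sigma^i_a Y^a\vol_i$ is exactly $\theta_Y$, where, because $Y^a$ is $\sigma$-independent and carries a fibre index of $\Sigma$, the assignment $Y=Y^a\,\partial/\partial x^a$ defines a bundle morphism $TM\otimes\g^*\otimes\bigwedge^{n}T^*M\to V\Sigma$ covering $\id_\Sigma$; it is valued in $V\Sigma$ (not all of $T\Sigma$) because the pairing $\llangle\cdot,\cdot\rrangle$ contracts $\sigma\in TM\otimes V^*\Sigma\otimes\bigwedge^{n}T^*M$ against $V\Sigma$, exactly as in the vertical/horizontal bookkeeping of Proposition~\ref{restr}. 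The remainder $g^i\vol_i$, being horizontal and $\sigma$-independent, is the pullback $\hat{\pi}_2^*\varpi$ of the horizontal $(n-1)$-form $\varpi=g^i\vol_i$ on $TM\otimes\g^*\otimes\bigwedge^{n}T^*M$. A closed horizontal $(n-1)$-form $\bar{\Upsilon}$ on the reduced space is necessarily pulled back from $M$ (closedness annihilates every $\sigma$-, $x^a$- and $\mu$-derivative of its coefficients), so $\bar{\Upsilon}$ may be absorbed into $\hat{\pi}_2^*\varpi$ or taken to vanish; it is retained only to parallel Proposition~\ref{n-1.formas unred}.

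Finally I would verify that this coordinate decomposition is chart-independent: the invariance of $\theta_Y$ and of $\hat{\pi}_2^*\varpi$ follows from their intrinsic descriptions through the pairing $\llangle\cdot,\cdot\rrangle$ and the projection $\hat{\pi}_2$, so the $\sigma$-linear and $\sigma$-constant parts of $f$ are separated globally. The main obstacle is precisely this globalization: one must check that the locally defined coefficients $Y^a$ assemble into a well-defined $V\Sigma$-valued bundle morphism and that the remainder is genuinely a $\hat{\pi}_2$-pullback rather than only a local one. Everything else is the routine coefficient comparison inherited from the unreduced case, now carrying the harmless extra connection terms supplied by $d^{h,J^1\Sigma^*}$.
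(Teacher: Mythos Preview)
Your proof is correct and follows essentially the same approach as the paper's: write $f=f^i\vol_i$, take a vertical $J^1\Sigma^*$-horizontal vector field $\chi_f$, contract with $\Omega_\Sigma$, and match coefficients with $d^{h,J^1\Sigma^*}f$, the key relation being $\partial f^i/\partial\sigma^j_a=Y^a\delta^i_j$. Your symmetry-of-second-derivatives argument for the $\sigma$-independence of $Y^a$ is a clean way to phrase what the paper leaves implicit (namely that $f^i$ depends only on $\sigma^i_a$ and the common value $Y^a=\partial f^i/\partial\sigma^i_a$ for each $i$ forces $\sigma$-independence when $n\ge 2$), and your remarks on globalization and on the redundancy of $\bar\Upsilon$ go slightly beyond the paper's terse ``the result follows since Poisson forms are defined up to a closed form.''
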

\begin{proof}
In local coordinates, $f=f^i\vol_i$ and
\begin{multline*}
d^{h,J^1\Sigma^*}f=\left(\frac{\partial f^i}{\partial x^i}+\left(c^{\gamma}_{\beta\alpha}A^{\beta}_i\mu^j_{\gamma}+\left(\Gamma^j_{ik}\mu^k_{\alpha}-\Gamma^k_{ik}\mu^j_{\alpha}\right)\right)\frac{\partial f^i}{\partial \mu^j_{\alpha}}\right)\vol\\
+\left(\frac{\partial f^i}{\partial x^a}+c^{\gamma}_{\beta\alpha}A^{\beta}_a\mu^k_{\gamma}\frac{\partial f^i}{\partial \mu^k_{\alpha}}\right)dx^a\wedge\vol_i
+\frac{\partial f^i}{\partial \sigma^j_a}d\sigma^j_a\wedge\vol_i.
\end{multline*}
As $f$ is Poisson, there is a $J^1\Sigma^*$-horizontal vector field
$$\chi_f=Y^a\frac{\partial}{\partial x^a}+Y^i_a\frac{\partial}{\partial \sigma^i_a}+W\frac{\partial}{\partial \tsigma}$$
such that $i_{\chi_f}\Omega_{\Sigma}=d^{h,J^1\Sigma^*}f$. Since
$$i_{\chi_f}\Omega_{\Sigma}=Y^ad\sigma^i_a\wedge\vol_i-Y^i_adx^a\wedge\vol_i-W\vol,$$
we obtain the following conditions for $\chi_f$:
\begin{align*}
W=-\left(\frac{\partial f^i}{\partial x^i}+\left(c^{\gamma}_{\beta\alpha}A^{\beta}_i\mu^j_{\gamma}+(\Gamma^j_{ik}\mu^k_{\alpha}-\Gamma^k_{ik}\mu^j_{\alpha})\right)\frac{\partial f^i}{\partial \mu^j_{\alpha}}\right),\\
Y^i_a=-\frac{\partial f^i}{\partial x^a}-c^{\gamma}_{\beta\alpha}A^{\beta}_a\mu^k_{\gamma}\frac{\partial f^i}{\partial \mu^k_{\alpha}},\phantom{espacio}
\delta^i_jY^a=\frac{\partial f^i}{\partial \sigma^j_a}.
\end{align*}
In conclusion, the general expression of a Poisson $(n-1)$-form is $f=(\sigma^i_aY^a+g^i)\vol_i$ for any functions $Y^a$, $g^i$ on $TM\otimes\g^*\otimes\bigwedge^{n}T^*M$. This allows to identify the first two terms in the Proposition and the result follows since Poisson forms are defined up to a closed form.
\end{proof}

From Theorem \ref{red.n-1.forms}, the projection of a $G$-invariant Poisson $(n-1)$-form on $\Pi_P$ is a Poisson $(n-1)$-form on $\Pi_{\Sigma}\oplus (TM\otimes\g^*\otimes\bigwedge^{n}T^*M)$ of a particular kind where $Y:TM\otimes\g^*\otimes\bigwedge^{n}T^*M\to V\Sigma$ projects to a vertical field $Y:\Sigma\to V\Sigma$ and $\varpi$, $(n-1)$-form in $TM\otimes\g^*\otimes\bigwedge^{n}T^*M\to M$,
can be written as:
$$\varpi=\theta_{\bar{\xi}}+\pi^*_{\Sigma,TM\otimes\g^*\otimes\bigwedge^{n}T^*M}\bar{\omega},$$
where $\bar{\xi}$ is a section in $\g\to \Sigma$ and $\bar{\omega}$ is an horizontal $(n-1)$-form on $\Sigma$. We will say such Poisson $(n-1)$-forms are affine.

The horizontal bracket of a Poisson $(n-1)$-form $f$ on $\Pi_{\Sigma}\oplus(TM\otimes\g^*\otimes\bigwedge^{n}T^*M)$ and a Poisson function $h$ can be easily expressed in local coordinates as
\begin{equation}
\{f,h\}_{\Sigma}=\left(\frac{\partial h}{\partial x^a}+c^{\gamma}_{\beta\alpha}A^{\beta}_a\mu^j_{\gamma}\frac{\partial h}{\partial \mu^j_{\alpha}}\right)\frac{\partial f^i}{\partial \sigma^i_a}-\left(\frac{\partial f^i}{\partial x^a}+c^{\gamma}_{\beta\alpha}A^{\beta}_a\mu^j_{\gamma}\frac{\partial f^i}{\partial \mu^j_{\alpha}}\right)\frac{\partial h}{\partial \sigma^i_a}
\end{equation}
from the local expressions in the proof of Propositions \ref{0-forma} and \ref{n-1.formas}. This expression is similar to the usual Poisson bracket defined for forms on $\Pi_{\Sigma}$ replacing partial derivatives $\partial/\partial x^a$ by their covariant ones through $\nabla^{\A,\Gamma}$ since the forms considered here are defined on $\Pi_{\Sigma}\oplus(TM\otimes\g^*\otimes\bigwedge^{n}T^*M)$ and not just on $\Pi_{\Sigma}$.

\subsection{The Construction of the Bracket}

For any function $h\in\mathcal{C}^{\infty}(\Pi_{\Sigma}\oplus(TM\otimes\g^*\otimes\bigwedge^{n}T^*M))$ there are two fiber derivatives. On one hand
\begin{equation*}
\frac{\delta h}{\delta\sigma}:\Pi_{\Sigma}\oplus (TM\otimes\g^*\otimes\bigwedge^{n}T^*M) \to T^*M\otimes V\Sigma\otimes\bigwedge^{n}TM
\end{equation*}
defined by
\begin{equation*}
\frac{\delta h}{\delta\sigma}(\sigma\oplus\mu)(\tau)=\frac{d}{d\varepsilon}\bigg|_{\varepsilon=0}h((\sigma+\varepsilon\tau)\oplus\mu),
\end{equation*}
whose expression in local coordinates is
\begin{equation}
\frac{\partial h}{\partial \sigma^i_a} dx^i\otimes \frac{\partial }{\partial x^a}\otimes \vol.
\end{equation}
On the other hand
\begin{equation*}
\frac{\delta h}{\delta\mu}:\Pi_{\Sigma}\oplus (TM\otimes\g^*\otimes\bigwedge^{n}T^*M) \to T^*M\otimes\g\otimes\bigwedge^{n}TM
\end{equation*}
given by
\begin{equation*}
\frac{\delta h}{\delta\mu}(\sigma\oplus\mu)(\nu)=\frac{d}{d\varepsilon}\bigg|_{\varepsilon=0}h(\sigma\oplus(\mu+\varepsilon\nu)),
\end{equation*}
whose local expression is
\begin{equation}
\frac{\partial h}{\partial \mu^i_{\alpha}} dx^i\otimes \tilde{B}_{\alpha}\otimes \vol.
\end{equation}
With this fiber derivative we define a second component of the bracket in $\Pi_P/G$ attached to $TM\otimes\g^*\otimes\bigwedge^{n}T^*M$. This is a Lie-Poisson bracket in Filed Theories as the one in \cite{someremarks}.

\begin{definition}
Given a function $h\in\mathcal{C}^{\infty}(\Pi_{\Sigma}\oplus(TM\otimes\g^*\otimes\bigwedge^{n}T^*M))$ and a section $\bar{\xi}$ of $\g\to \Sigma$ we define
\begin{equation}
\label{LiePoissonBra}
\{\bar{\xi},h\}_{LP}=-\left\langle\mu,\left[\bar{\xi},\frac{\delta h}{\delta\mu}\right]\right\rangle,\qquad \mu \in TM\otimes\g^*\otimes\bigwedge^{n}T^*M.
\end{equation}
\end{definition}

Finally, our bracket in $\Pi_{\Sigma}\oplus(TM\otimes\g^*\otimes\bigwedge^{n}T^*M)$ needs a deformation term defined by the curvature. More precisely,
\begin{align*}
\bar{\B}: T\Sigma \otimes T^*M\otimes T\Sigma \otimes\bigwedge^{n}TM &\to T^*M\otimes  \g \otimes\bigwedge^{n}TM \\
\left(\frac{\partial }{\partial x^a} , dx^i\otimes\frac{\partial }{\partial x^b}\otimes\vol^*\right) &\mapsto dx^i\otimes\tilde{\B}\left(\frac{\partial }{\partial x^a},\frac{\partial }{\partial x^b}\right)\otimes\vol^*,
\end{align*}
where $\tilde{\B}$ is  the $2$-form defined on $\Sigma$ by the curvature form $\B$ of the connection $\A$ in $P\to \Sigma$

Once we have the bracket steaming from $\Pi_\Sigma$, the Lie Poisson bracket and the deformation we can define the full bracket in the reduced space as follows.

\begin{definition}
Let $f$ be an affine Poisson $(n-1)$-form $\Pi_{\Sigma}\oplus(TM\otimes\g^*\otimes\bigwedge^{n}T^*M)$ and $h$ a Poisson function on $\Pi_{\Sigma}\oplus(TM\otimes\g^*\otimes\bigwedge^{n}T^*M)$, their Poisson bracket is
\begin{equation} \label{reduced bracket}
\{f,h\}=\{f,h\}_{\Sigma}-\left\langle\mu,\left[\bar{\xi},\frac{\delta h}{\delta\mu}\right]\right\rangle -\left\langle\mu,\bar{\B}\left(Y,\frac{\delta h}{\delta\sigma}\right)\right\rangle,
\end{equation}
where $Y$ and $\bar{\xi}$ are respectively sections of $T\Sigma\to\Sigma$ and $\g\to\Sigma$ that define $f$ as in Theorem \ref{red.n-1.forms}.
\end{definition}

\section{Poisson--Poincar\'e Reduction}\label{SEC:reduction}
In this Section Poisson-Poincar\'e reduction is presented in the multisymplectic and polysymplectic formalism. The reduced dynamics are described by the bracket constructed in Section \ref{SEC:hor bracket}.
\begin{theorem} \label{kappa.is.poisson}
The projection map%
\begin{equation*}
\kappa:\Pi_{P}\longrightarrow\Pi_{P}/G\cong\Pi_{\Sigma}\oplus (TM\otimes\g^*\otimes\bigwedge^{n-1}T^*M)
\end{equation*}
is a Poisson map in the sense that for any $G$-invariant Poisson $(n-1)$-form $F$ and any $G$-invariant Poisson function $H$ on $\Pi_P$
\begin{equation}
\{F,H\}=\kappa^*\{f,h\},
\end{equation}
where the bracket on the left hand side is defined by \eqref{Poisson bracket}, $f$ is a Poisson $(n-1)$-form on $\Pi_{\Sigma}\oplus (TM\otimes\g^*\otimes\bigwedge^{n-1}T^*M)$ such that $\kappa^*f=F$, $h$ is a function such that $\kappa^*h=H$, and the bracket on the right hand side is defined by \eqref{reduced bracket}.
\end{theorem}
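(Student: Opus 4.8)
The plan is to verify the identity pointwise in the adapted coordinates of Section~\ref{SEC: symmetries}. Both $\{F,H\}$ and $\kappa^*\{f,h\}$ are $G$-invariant functions on $\Pi_P$ (the former because $F,H$ are invariant and the bracket \eqref{Poisson bracket} is natural, the latter by construction), so it suffices to compare their values at points of the form $y=(s,e)$; in a suitable trivialization every $G$-orbit meets such a point, and the local formulas of Propositions~\ref{localkappa} and \ref{dif.kappa} are all written there. First I would record the local form of the left-hand bracket. Writing $F=F^i\vol_i$ and recalling that \eqref{Poisson bracket} is built from the canonical (\emph{uncorrected}) form $\Omega=dy^B\wedge dp^i_B\wedge\vol_i-d\tp\wedge\vol$ on $J^1P^*$, the same computation that produced the displayed formula after Proposition~\ref{n-1.formas} gives, for a Poisson $(n-1)$-form and a Poisson function,
\[
\{F,H\}=\frac{\partial H}{\partial x^a}\frac{\partial F^i}{\partial p^i_a}+\frac{\partial H}{\partial y^\alpha}\frac{\partial F^i}{\partial p^i_\alpha}-\frac{\partial F^i}{\partial x^a}\frac{\partial H}{\partial p^i_a}-\frac{\partial F^i}{\partial y^\alpha}\frac{\partial H}{\partial p^i_\alpha},
\]
where the sums run over the fibre coordinates $x^a,y^\alpha$ of $P$ and their conjugate momenta. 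The absence here of any connection correction (present on the reduced side through $d^{h,J^1\Sigma^*}$) is exactly the source of the extra terms to be produced.

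Next I would translate every derivative of $F$ and $H$ into derivatives of $f$ and $h$ via $F=f\circ\kappa$, $H=h\circ\kappa$. Differentiating the local form of $\kappa$ in Proposition~\ref{localkappa} (equivalently, reading off the columns of $T\kappa$ in Proposition~\ref{dif.kappa}) yields at $y=(s,e)$ the substitutions
\[
\frac{\partial F^i}{\partial p^i_a}=\frac{\partial f^i}{\partial\sigma^i_a},\qquad
\frac{\partial F^i}{\partial p^i_\alpha}=A^\alpha_b\frac{\partial f^i}{\partial\sigma^i_b}+\frac{\partial f^i}{\partial\mu^i_\alpha},\qquad
\frac{\partial F^i}{\partial x^a}=\frac{\partial f^i}{\partial x^a}+\mu^k_\beta\frac{\partial A^\beta_b}{\partial x^a}\frac{\partial f^i}{\partial\sigma^k_b},
\]
together with the crucial group-direction derivative
\[
\frac{\partial F^i}{\partial y^\alpha}=-\tfrac12\mu^j_\gamma c^\gamma_{\beta\alpha}A^\beta_b\frac{\partial f^i}{\partial\sigma^j_b}-\tfrac12\mu^j_\gamma c^\gamma_{\beta\alpha}\frac{\partial f^i}{\partial\mu^j_\beta},
\]
and identical formulas for $H$. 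This last identity is precisely where the equivariance of the connection coefficients $A$ and of $F,H$ enters, through the second part of Lemma~\ref{baker-campbell} and Lemma~\ref{func.inv}; it replaces the ``illegal'' $y^\alpha$-derivatives by structure constants.

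I would then substitute these four relations into the local formula for $\{F,H\}$ and sort the resulting monomials into three groups according to the factors they carry. The terms in which both factors are $\sigma$-derivatives, together with the piece coming from $\partial H/\partial x^a\cdot\partial F^i/\partial p^i_a$, reproduce $\{f,h\}_\Sigma$ once the covariant corrections $c^\gamma_{\beta\alpha}A^\beta_a\mu^j_\gamma\,\partial_{\mu^j_\alpha}$ are recognized in the cross terms from $\partial F/\partial y^\alpha$ and $\partial H/\partial y^\alpha$. The terms carrying two $\mu$-derivatives collapse, after inserting $\partial f^i/\partial\mu^i_\alpha=\xi^\alpha$ for the affine form $f=\theta_Y+\theta_{\bar\xi}+\bar\pi^*\bar\omega+\bar\Upsilon$, into the Lie--Poisson bracket $-\langle\mu,[\bar\xi,\delta h/\delta\mu]\rangle$, the two symmetric contributions from $\partial F/\partial y^\alpha$ and $\partial H/\partial y^\alpha$ combining to cancel the factors $\tfrac12$.

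The main obstacle is the third group. The bracket will contain, on the one hand, the terms $\mu^k_\gamma(\partial_a A^\gamma_b-\partial_b A^\gamma_a)$ coming from the $x^a$-derivatives of $A$ in $\partial F/\partial x^a$ and $\partial H/\partial x^a$, and on the other hand the quadratic terms $\mu^j_\gamma c^\gamma_{\alpha\beta}A^\alpha_a A^\beta_b$ coming from the $A$-part of $\partial F/\partial y^\alpha$ and $\partial H/\partial y^\alpha$; the delicate step is to check that these assemble, with the correct signs, into the local components $B^\gamma_{ab}$ of the curvature of $\A$, so that the remaining factors $\partial f^i/\partial\sigma^j_a$ (principal part $\delta^i_j Y^a$) and $\partial h/\partial\sigma^i_b=\delta h/\delta\sigma$ exhibit this group as $-\langle\mu,\bar{\B}(Y,\delta h/\delta\sigma)\rangle$. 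Here the hypothesis that $f$ be \emph{affine}, so that $Y$ and $\bar\xi$ of Theorem~\ref{red.n-1.forms} are genuinely defined, is essential. Once the three groups are matched, evaluating $\kappa^*\{f,h\}$ at $y=(s,e)$, where $\sigma^i_a=p^i_a+p^i_\alpha A^\alpha_a$ and $\mu^i_\alpha=p^i_\alpha$, and comparing closes the proof; the closed and pullback pieces $\bar\Upsilon,\bar\omega$ of $f$ carry no $\mu$-dependence of the relevant type and contribute only through $\{f,h\}_\Sigma$, matching their counterparts in $F$.
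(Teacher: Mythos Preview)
Your approach is essentially the same as the paper's: both compute the local expression of $\{F,H\}$ in the adapted coordinates of Section~\ref{SEC: symmetries}, use the chain rule via $T\kappa$ from Proposition~\ref{dif.kappa} to rewrite the $F$- and $H$-derivatives in terms of $f,h$, exploit the affine structure $\partial f^i/\partial\sigma^j_a=Y^a\delta^i_j$, $\partial f^i/\partial\mu^j_\alpha=\xi^\alpha\delta^i_j$ from Theorem~\ref{red.n-1.forms}, and regroup to recognise the three summands of \eqref{reduced bracket}. The only visible difference is a global sign in your local formula for $\{F,H\}$ relative to the one written in the paper's proof; since the reduced bracket $\{f,h\}_\Sigma$ displayed after Proposition~\ref{n-1.formas} carries your sign convention, this is a cosmetic discrepancy rather than a gap.
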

\begin{proof}
In the coordinate system introduced in $\Pi_P$, we have the local expression
\begin{align*}
\{F,H\}=\frac{\partial F^i}{\partial x^a}\frac{\partial H}{\partial p^i_a}-\frac{\partial F^i}{\partial p^i_a}\frac{\partial H}{\partial x^a}+\frac{\partial F^i}{\partial y^{\alpha}}\frac{\partial H}{\partial p^i_{\alpha}}-\frac{\partial F^i}{\partial p^i_{\alpha}}\frac{\partial H}{\partial y^{\alpha}}
\end{align*}
As $F$ and $H$ are $G$-invariant, using the differential of projection $\kappa$ obtained in Proposition \ref{dif.kappa} gives a function $r$ on $\Pi_{\Sigma}\oplus (TM\otimes\g^*\otimes\bigwedge^{n-1}T^*M)$
\begin{align*}
r=&\frac{\partial f^i}{\partial x^a}\frac{\partial h}{\partial \sigma^i_a}+\mu^j_{\gamma}\frac{\partial A^{\gamma}_b}{\partial x^a}\frac{\partial f^i}{\partial \sigma^j_b}
\frac{\partial h}{\partial \sigma^i_a}-\frac{\partial f^i}{\partial \sigma^i_a}\frac{\partial h}{\partial x^a}-\frac{\partial f^i}{\partial \sigma^i_a}\mu^j_{\gamma}\frac{\partial A^{\gamma}_b}{\partial x^a}\frac{\partial h}{\partial \sigma^j_b}\\
&+\left(-\frac{1}{2} \mu^j_{\gamma}c^{\gamma}_{\beta\alpha}A^{\beta}_b\frac{\partial f^i}{\partial\sigma^j_b}-\frac{1}{2} \mu^j_{\gamma}c^{\gamma}_{\beta\alpha}\frac{\partial f^i}{\partial\mu^j_{\beta}}\right)\left(\frac{\partial h}{\partial \mu^i_{\alpha}}+A^{\alpha}_a\frac{\partial h}{\partial \sigma^i_a}\right)\\
&-\left(\frac{\partial f^i}{\partial \mu^i_{\alpha}}+A^{\alpha}_a\frac{\partial f^i}{\partial \sigma^i_a}\right)\left(-\frac{1}{2} \mu^j_{\gamma}c^{\gamma}_{\beta\alpha}A^{\beta}_b\frac{\partial h}{\partial\sigma^j_b}-\frac{1}{2} \mu^j_{\gamma}c^{\gamma}_{\beta\alpha}\frac{\partial h}{\partial\mu^j_{\beta}}\right),
\end{align*}
such that $\{F,H\}=\kappa^*r$. We shall see that $r=\{f,h\}$ as desired. Since $f$ is the projection of $F$, from Theorem \ref{red.n-1.forms}, $f$ is an affine Poisson form and $\partial f^i/\partial \sigma^j_a=Y^a\delta^i_j$ and $\partial f^i/\partial \mu^j_{\alpha}=\xi^{\alpha}\delta^i_j$. Therefore,
\begin{align*}
r=&\frac{\partial f^i}{\partial x^a}\frac{\partial h}{\partial \sigma^i_a}-\frac{\partial f^i}{\partial \sigma^i_a}\frac{\partial h}{\partial x^a}+\mu^j_{\gamma}\left(\frac{\partial A^{\gamma}_b}{\partial x^a}-\frac{\partial A^{\gamma}_a}{\partial x^b}\right)\frac{\partial f^i}{\partial \sigma^j_b}\frac{\partial h}{\partial \sigma^i_a}+\mu^j_{\gamma}c^{\gamma}_{\beta\alpha}A^{\beta}_aA^{\alpha}_b\frac{\partial h}{\partial\sigma^i_a}\frac{\partial f^i}{\partial\sigma^j_b}\\
&+
\mu^j_{\gamma}c^{\gamma}_{\beta\alpha}\frac{\partial h}{\partial\mu^j_{\beta}}\frac{\partial f^i}{\partial\mu^i_{\alpha}}+\mu^j_{\gamma}c^{\gamma}_{\beta\alpha}A^{\alpha}_b\frac{\partial h}{\partial\mu^j_{\beta}}\frac{\partial f^i}{\partial\sigma^j_b}+
\mu^j_{\gamma}c^{\gamma}_{\beta\alpha}A^{\beta}_a\frac{\partial h}{\partial\sigma^i_a}\frac{\partial f^i}{\partial\mu^j_{\alpha}}\\
=&\left(\frac{\partial f^i}{\partial x^a}+\mu^j_{\gamma}c^{\gamma}_{\beta\alpha}A^{\beta}_a\frac{\partial f^i}{\partial\mu^j_{\alpha}}\right)\frac{\partial h}{\partial \sigma^i_a}
-\frac{\partial f^i}{\partial \sigma^i_a}\left(\frac{\partial h}{\partial x^a}+\mu^j_{\gamma}c^{\gamma}_{\beta\alpha}A^{\beta}_a\frac{\partial h}{\partial\mu^j_{\alpha}}\right)\\
&+\mu^j_{\gamma}\left(\frac{\partial A^{\gamma}_b}{\partial x^a}-\frac{\partial A^{\gamma}_a}{\partial x^b}+c^{\gamma}_{\beta\alpha}A^{\beta}_aA^{\alpha}_b\right)\frac{\partial f^i}{\partial \sigma^j_b}\frac{\partial h}{\partial \sigma^i_a}
+\mu^j_{\gamma}c^{\gamma}_{\beta\alpha}\frac{\partial h}{\partial\mu^j_{\beta}}\frac{\partial f^i}{\partial\mu^i_{\alpha}},
\end{align*}
where the three terms of $\{f,h\}$ can be identified.
\end{proof}

Consider the following map related with the lift of the Ehresmann connection $\Lambda$ on $\Sigma\to M$:
\begin{align*}
\bar{\Lambda}:\Sigma &\to T^*M\otimes T\Sigma \otimes\bigwedge^{n}TM \\
s &\mapsto dx^i\otimes\left(\delta^j_i\frac{\partial}{\partial x^j}+\Lambda_i^a(s)\frac{\partial}{\partial x^a}\right)\otimes\vol^*.
\end{align*}
\begin{lemma} \label{red.dh}
Let $F$ be a $G$-invariant Poisson $(n-1)$-form on $\Pi_P$ and $f$ the affine Poisson $(n-1)$-form on $\Pi_{\Sigma}\oplus (TM\otimes\g^*\otimes\bigwedge^{n-1}T^*M)$ such that $\kappa^*f=F$ as in Theorem \ref{red.n-1.forms}, then
\begin{equation}
d^hF=\kappa^*\left(d^hf-\left\langle\mu, \bar{\B}(Y,\bar{\Lambda})\right\rangle\vol\right),
\end{equation}
where $d^hF$ is the horizontal derivative with respect to the Ehresmann connection $\Lambda^{\Pi_P}$ on $\Pi_P\to M$ and $d^hf$ is the horizontal derivative with respect to the Ehresmann connection $\Lambda^{\Pi_{\Sigma}}\oplus\Lambda^{\A,\Gamma}.$
\end{lemma}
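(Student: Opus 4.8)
The plan is to reduce the statement to a local computation in the adapted coordinates of Section~\ref{SEC: symmetries} and to transport the horizontal lift through $\kappa$ by the chain rule. Writing $F=F^i\vol_i$ with $\vol_i=i_{\partial/\partial x^i}\vol$, the relation $\kappa^*f=F$ together with $\kappa^*\vol_i=\vol_i$ gives $F^i=f^i\circ\kappa$. Denote by $H_i$ the horizontal lift of $\partial/\partial x^i$ for $\Lambda^{\Pi_P}$ (Section~\ref{SEC: Connections}) and by $\tilde H_i$ the horizontal lift for $\Lambda^{\Pi_{\Sigma}}\oplus\Lambda^{\A,\Gamma}$. Since $\vol_i$ and $\vol$ are basic and closed on the total spaces, one checks that $d^hF=(H_iF^i)\vol$ and $\kappa^*d^hf=\bigl((\tilde H_if^i)\circ\kappa\bigr)\vol$, using $dx^j\wedge\vol_i=\delta^j_i\vol$. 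The chain rule yields $H_iF^i=df^i\bigl(T\kappa(H_i)\bigr)$, so the whole identity reduces to computing the vector
\[
T\kappa(H_i)-\tilde H_i
\]
at $y=(s,e)$ and contracting it with $df^i$.

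First I would compute $T\kappa(H_i)$ term by term from the explicit differential of Proposition~\ref{dif.kappa} and the horizontal lift of $\Lambda^{\Pi_P}$. To simplify coefficients I would invoke the invariance of $\Lambda$ and the equivariance of $\A$: Lemma~\ref{func.inv} gives $\partial A^\gamma_b/\partial y^\alpha=-\tfrac12 c^\gamma_{\beta\alpha}A^\beta_b$, while the compatibility relations \eqref{compatibleP}--\eqref{compatibleSig} give $\partial\Lambda^\beta_i/\partial y^\alpha=-\tfrac12 c^\beta_{\gamma\alpha}\Lambda^\gamma_i$ and $\partial\Lambda^c_i/\partial y^\alpha=0$. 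The key intermediate claim is that $T\kappa(H_i)$ and $\tilde H_i$ share the same base and the same $\partial/\partial\mu^j_\beta$ components: the two half-structure-constant contributions---one from $T\kappa(\partial/\partial y^\alpha)$, one from $\partial\Lambda^\beta_i/\partial y^\alpha$---add up, after using $c^\gamma_{\alpha\beta}=-c^\gamma_{\beta\alpha}$, to the single structure-constant term of $\Lambda^{\A,\Gamma}$, and the Christoffel terms coincide directly. Hence $T\kappa(H_i)-\tilde H_i$ is purely $\partial/\partial\sigma$-valued, and the $\partial/\partial\mu$ slot of $df^i$ plays no role (even though $f$ is affine with $\partial f^i/\partial\mu^j_\beta=\xi^\beta\delta^i_j$ by Theorem~\ref{red.n-1.forms}).

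It then remains to contract the $\partial/\partial\sigma^j_b$ component of the difference with $\partial f^i/\partial\sigma^j_b=Y^b\delta^i_j$. In the resulting scalar the Christoffel terms and the $\partial_b\Lambda^c_i$ terms cancel pairwise; the two surviving structure-constant terms again coincide after antisymmetry and, using \eqref{compatibleP} in the form $A^\epsilon_i+\Lambda^c_iA^\epsilon_c=\Lambda^\epsilon_i$, combine to the quadratic part $-\mu^i_\gamma Y^b\,c^\gamma_{\delta\epsilon}A^\delta_b\Lambda^\epsilon_i$; and the remaining first-order terms, rewritten through \eqref{compatibleP}, assemble into $\partial_iA^\gamma_b-\partial_bA^\gamma_i$ together with its $\Lambda^c_i$-twisted analogue. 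Collecting these pieces produces exactly
\[
-\mu^i_\gamma Y^b\bigl(B^\gamma_{bi}+\Lambda^c_iB^\gamma_{bc}\bigr)=-\langle\mu,\bar{\B}(Y,\bar{\Lambda})\rangle,
\]
where $B^\gamma_{bi}=\partial_bA^\gamma_i-\partial_iA^\gamma_b+c^\gamma_{\delta\epsilon}A^\delta_bA^\epsilon_i$ and $B^\gamma_{bc}$ analogously are the curvature components of $\A$ carried by $\tilde{\B}$ (the same combination appearing in the proof of Theorem~\ref{kappa.is.poisson}). This gives $d^hF=\kappa^*\bigl(d^hf-\langle\mu,\bar{\B}(Y,\bar{\Lambda})\rangle\vol\bigr)$.

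The main obstacle is precisely this bookkeeping. The delicate point is the factor-$\tfrac12$ tracking: the antisymmetric structure-constant contributions must double rather than cancel, so that the full curvature $B^\gamma$ is recovered instead of half of it, and the compatibility relations \eqref{compatibleP}--\eqref{compatibleSig} have to be applied at several stages to convert $A$-data into $\Lambda$-data. Since the computation is carried out at $y=(s,e)$ but all objects involved---$F$, $\Lambda$, $\A$, and the induced connections---are $G$-invariant, the resulting local identity holds at every point of the fibre and is therefore global.
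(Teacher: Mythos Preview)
Your proof is correct and follows essentially the same route as the paper's: a local computation in adapted coordinates using Proposition~\ref{dif.kappa}, the explicit horizontal lifts of Section~\ref{SEC: Connections}, Lemma~\ref{func.inv}, and the compatibility relations \eqref{compatibleP}--\eqref{compatibleSig}, culminating in the identification of the leftover term as the curvature expression. The only organisational difference is that you package the computation as the pushforward $T\kappa(H_i)-\tilde H_i$ and then contract with $df^i$, whereas the paper expands $d^hF$ directly, substitutes the $T\kappa$ images of each basis vector, and then regroups by partial derivatives of $f^i$; these are the same manipulations in a different order, and your framing arguably makes the cancellations (Christoffel terms, $\partial/\partial\mu$ components, factor-$\tfrac12$ doubling) more transparent.
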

\begin{proof}
The local expression of the horizontal derivative of an $(n-1)$-form $F=F^i\vol_i$ is
\begin{align*}
d^hF=&\left(\frac{\partial F^i}{\partial x^i}+\Lambda^a_i\frac{\partial F^i}{\partial x^a}+\Lambda^{\alpha}_i\frac{\partial F^i}{\partial y^{\alpha}}
+\left(-\frac{\partial \Lambda^b_i}{\partial x^a}p^j_b-\frac{\partial \Lambda^{\beta}_i}{\partial x^a}p^j_{\beta}+\Gamma^j_{ik}p^k_a-\Gamma^k_{ik}p^j_a\right)\frac{\partial F^i}{\partial p^j_a}\right.\\
&+\left.\left(-\frac{\partial \Lambda^b_i}{\partial y^{\alpha}}p^j_b-\frac{\partial \Lambda^{\beta}_i}{\partial y^{\alpha}}p^j_{\beta}+\Gamma^j_{ik}p^k_{\alpha}-\Gamma^k_{ik}p^j_{\alpha}\right)\frac{\partial F^i}{\partial p^j_{\alpha}}\right)\vol
\end{align*}
Since $F$ is Poisson, $F^i$ depends on $p^j_a$ or $p^j_{\alpha}$ only when $i=j$. In addition, as $\Lambda$ is $G$-invariant, $\partial \Lambda^a_i/\partial y^{\alpha}=0$ for $i=1,\dots,n$; $a=1,\dots,m-k-n $; $\alpha=1\dots k$. Thus,
\begin{align*}
d^hF=&\left(\frac{\partial F^i}{\partial x^i}+\Lambda^a_i\frac{\partial F^i}{\partial x^a}+\Lambda^{\alpha}_i\frac{\partial F^i}{\partial y^{\alpha}}
+\left(-\frac{\partial \Lambda^b_i}{\partial x^a}p^j_b-\frac{\partial \Lambda^{\beta}_i}{\partial x^a}p^j_{\beta}\right)\frac{\partial F^i}{\partial p^j_a}-\frac{\partial \Lambda^{\beta}_i}{\partial y^{\alpha}}p^j_{\beta}\frac{\partial F^i}{\partial p^j_{\alpha}}\right)\vol.
\end{align*}
Similarly, as $\Lambda$ is $G$-invariant, Lemma \ref{func.inv} can be applied to any $\partial \Lambda^{\gamma}_i/\partial y^{\alpha}$ for $i=1,\dots,n$; $\alpha,\gamma=1\dots k$. Furthermore as $F$ is $G$-invariant, using the derivative of $\kappa$ in Proposition \ref{dif.kappa} we obtain
\begin{align*}
d^hF=&\kappa^*\left(\frac{\partial f^i}{\partial x^i}+\mu^j_{\gamma}\frac{\partial A^{\gamma}_b}{\partial x^i}\frac{\partial f^i}{\partial \sigma^j_b}+\Lambda^a_i\frac{\partial f^i}{\partial x^a}+\mu^j_{\gamma}\frac{\partial A^{\gamma}_b}{\partial x^a}\Lambda^a_i\frac{\partial f^i}{\partial \sigma^j_b}\right.\\
&-\frac{1}{2}\mu^j_{\gamma}c^{\gamma}_{\beta\alpha}A^{\beta}_b\Lambda^{\alpha}_i\frac{\partial f^i}{\partial \sigma^j_{b}}-\frac{1}{2}\mu^j_{\gamma}c^{\gamma}_{\beta\alpha}\Lambda^{\alpha}_i\frac{\partial f^i}{\partial \mu^j_{\beta}}\\
&\left.+\left(-\frac{\partial \Lambda^b_i}{\partial x^a}\sigma^j_b+\frac{\partial \Lambda^b_i}{\partial x^a}\mu^j_{\gamma}A^{\gamma}_b-\frac{\partial \Lambda^{\beta}_i}{\partial x^a}\mu^j_{\beta}\right)\frac{\partial f^i}{\partial \sigma^j_a}+\frac{1}{2}\mu^j_{\beta}c^{\beta}_{\gamma\alpha}\Lambda^{\gamma}_i\left(\frac{\partial f^i}{\partial \mu^j_{\alpha}}+A^{\alpha}_b\frac{\partial f^i}{\partial \sigma^j_b}\right)\right)\vol.
\end{align*}
Regrouping for different partial derivatives of $f^i$,
\begin{align*}
d^hF=&\kappa^*\left(\frac{\partial f^i}{\partial x^i}+\Lambda^a_i\frac{\partial f^i}{\partial x^a}+\mu^j_{\gamma}c^{\gamma}_{\beta\alpha}\Lambda^{\beta}_i\frac{\partial f^i}{\partial \mu^j_{\alpha}}\right.\\
&\left.+\left(-\frac{\partial \Lambda^b_i}{\partial x^a}\sigma^j_b+\mu^j_{\gamma}\left(\frac{\partial A^{\gamma}_a}{\partial x^i}+\frac{\partial A^{\gamma}_a}{\partial x^b}\Lambda^b_i+\frac{\partial \Lambda^b_i}{\partial x^a}
A^{\gamma}_b-\frac{\partial \Lambda^{\gamma}_i}{\partial x^a}-c^{\gamma}_{\beta\alpha}A^{\beta}_a\Lambda^{\alpha}_i\right)\right)\frac{\partial f^i}{\partial \sigma^j_a}\right)\vol\\
=&\kappa^*(d^hf)+\kappa^*\left(\mu^i_{\gamma}\left(\frac{\partial A^{\gamma}_a}{\partial x^i}+\frac{\partial A^{\gamma}_a}{\partial x^b}\Lambda^b_i+\frac{\partial \Lambda^b_i}{\partial x^a}
A^{\gamma}_b-\frac{\partial \Lambda^{\gamma}_i}{\partial x^a}-c^{\gamma}_{\beta\alpha}A^{\beta}_a\Lambda^{\alpha}_i\right)Y^a\right)\vol
\end{align*}
where the specific structure of Poisson $(n-1)$-form $f$ has been used. To finish the proof, it suffices to show that the last term of the later expression is a curvature term;
\begin{align*}
&\frac{\partial A^{\gamma}_a}{\partial x^i}+\frac{\partial A^{\gamma}_a}{\partial x^b}\Lambda^b_i+\frac{\partial \Lambda^b_i}{\partial x^a}
A^{\gamma}_b-\frac{\partial \Lambda^{\gamma}_i}{\partial x^a}-c^{\gamma}_{\beta\alpha}A^{\beta}_a\Lambda^{\alpha}_i\\
&=\frac{\partial A^{\gamma}_a}{\partial x^i}+\frac{\partial A^{\gamma}_a}{\partial x^b}\Lambda^b_i-\frac{\partial A^{\gamma}_b}{\partial x^a}\Lambda^b_i+\frac{\partial (\Lambda^b_iA^{\gamma}_b)}{\partial x^a}
-\frac{\partial \Lambda^{\gamma}_i}{\partial x^a}-c^{\gamma}_{\beta\alpha}A^{\beta}_a\Lambda^{\alpha}_i\\
&=\frac{\partial A^{\gamma}_a}{\partial x^i}-\frac{\partial A^{\gamma}_i}{\partial x^a}+\Lambda^b_i\left(\frac{\partial A^{\gamma}_a}{\partial x^b}-\frac{\partial A^{\gamma}_b}{\partial x^a}\right)+\frac{\partial (A^{\gamma}_i+\Lambda^b_iA^{\gamma}_b-\Lambda^{\gamma}_i)}{\partial x^a}
-c^{\gamma}_{\beta\alpha}A^{\beta}_a\Lambda^{\alpha}_i\\
&=\frac{\partial A^{\gamma}_a}{\partial x^i}-\frac{\partial A^{\gamma}_i}{\partial x^a}-c^{\gamma}_{\beta\alpha}A^{\beta}_aA^{\alpha}_i+\Lambda^b_i\left(\frac{\partial A^{\gamma}_a}{\partial x^b}-\frac{\partial A^{\gamma}_b}{\partial x^a}-c^{\gamma}_{\beta\alpha}A^{\beta}_aA^{\alpha}_b\right)\\
&\phantom{=}+\frac{\partial (A^{\gamma}_i+\Lambda^b_iA^{\gamma}_b-\Lambda^{\gamma}_i)}{\partial x^a}+c^{\gamma}_{\beta\alpha}A^{\beta}_a\left(A^{\alpha}_i +A^{\alpha}_b\Lambda^b_i-\Lambda^{\alpha}_i\right).
\end{align*}
From the compatibility equation \eqref{compatibleP}, the last two terms vanish. Finally, the other two terms are a local expression of the curvature $\B$ applied to $\partial/\partial x^a$ and the lift of $\partial/\partial x^i$ from $M$ to $s\in\Sigma$.
\end{proof}
\begin{theorem} \label{reduction.intrinsic.eqs}
For any section $p$ of $\Pi_P\to M$, let $\sigma\oplus\mu=\kappa \circ p$ be the reduced section of
$$\Pi_P/G\cong\Pi_\Sigma\oplus(TM\otimes\g^*\otimes\bigwedge^{n-1}T^*M)\to\Sigma.$$
Then, the following are equivalent:
\begin{enumerate}[(i)]
\item for every Poisson $(n-1)$-form $F$ on $\Pi_P$, the following identity holds true:
\begin{equation}\label{intrinsic.eq}
\{F,H\}\vol=d(F\circ p)-d^{h}F\circ p.
\end{equation}
\item for every affine Poisson $(n-1)$-form $f$ on $\Pi_\Sigma\oplus(TM\otimes\g^*\otimes\bigwedge^{n-1}T^*M)$:
\begin{equation}\label{intrinsic.red.eq}
\{f,h\}\vol=d(f\circ(\sigma\oplus\mu))-d^{h}f\circ(\sigma\oplus\mu
)+\left\langle\mu, \bar{\B}(Y,\bar{\Lambda})\right\rangle\vol,
\end{equation}
where the bracket is defined by equation \eqref{reduced bracket}.
\end{enumerate}
\end{theorem}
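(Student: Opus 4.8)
The plan is to turn the biconditional into a single per-form identity: for each matched pair consisting of an affine Poisson $(n-1)$-form $f$ downstairs and the $G$-invariant Poisson $(n-1)$-form $F=\kappa^*f$ upstairs, I will show that \eqref{intrinsic.eq} evaluated along $p$ is literally the same equation as \eqref{intrinsic.red.eq} evaluated along $\sigma\oplus\mu=\kappa\circ p$. By Theorem \ref{red.n-1.forms} together with the discussion following Proposition \ref{n-1.formas}, the assignment $f\mapsto\kappa^*f$ is a bijection between affine Poisson $(n-1)$-forms on $\Pi_\Sigma\oplus(TM\otimes\g^*\otimes\bigwedge^{n}T^*M)$ and $G$-invariant Poisson $(n-1)$-forms on $\Pi_P$, and it carries to $f$ precisely the data $Y$ and $\bar{\xi}$ appearing in \eqref{intrinsic.red.eq}.

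First I would rewrite each of the three terms of \eqref{intrinsic.eq} in reduced variables, exploiting $\kappa\circ p=\sigma\oplus\mu$. The bracket term is handled by Theorem \ref{kappa.is.poisson}: since $\{F,H\}\vol=\kappa^*(\{f,h\}\vol)$, pulling back along $p$ gives $\{F,H\}\vol\circ p=\{f,h\}\vol\circ(\sigma\oplus\mu)$. The exact term is immediate from $F=\kappa^*f$, because $F\circ p=(\kappa\circ p)^*f=f\circ(\sigma\oplus\mu)$ and hence $d(F\circ p)=d\big(f\circ(\sigma\oplus\mu)\big)$. The horizontal-differential term is exactly what Lemma \ref{red.dh} was designed for: it yields $d^hF\circ p=d^hf\circ(\sigma\oplus\mu)-\langle\mu,\bar{\B}(Y,\bar{\Lambda})\rangle\vol$, where the curvature correction reappears with the opposite sign. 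Substituting these three identities into \eqref{intrinsic.eq} produces \eqref{intrinsic.red.eq} term by term, and conversely; thus the two equations are equivalent for the matched pair $(F,f)$. Note that the deformation term $\langle\mu,\bar{\B}(Y,\bar{\Lambda})\rangle\vol$ in \eqref{intrinsic.red.eq} originates entirely from the curvature correction of Lemma \ref{red.dh} and not from the reduced bracket \eqref{reduced bracket}, whose three pieces are already matched by Theorem \ref{kappa.is.poisson}.

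With the per-form equivalence in hand, the implication (i) $\Rightarrow$ (ii) is immediate: if \eqref{intrinsic.eq} holds for every Poisson $(n-1)$-form, it holds in particular for every $G$-invariant one, and the bijection then delivers \eqref{intrinsic.red.eq} for every affine $f$. The implication (ii) $\Rightarrow$ (i) is the delicate step, and I expect it to be the main obstacle: from \eqref{intrinsic.red.eq} for all affine $f$ the per-form equivalence returns \eqref{intrinsic.eq} only for $G$-invariant $F$, whereas (i) demands it for all $F$. I would close this gap by a local argument in the adapted coordinates of Section \ref{SEC: symmetries}: by Proposition \ref{n-1.formas unred} it suffices to test \eqref{intrinsic.eq} on the forms $\theta_X$ with $X$ a vertical field, the $G$-invariant such $X$ span $V_yP$ pointwise (being generated by $\partial/\partial x^a$ together with the fundamental vector fields in the $\partial/\partial y^\alpha$ directions), and since $H$ is $G$-invariant the equations extracted this way are already the full Hamilton--Cartan system \eqref{Hamilton-Cartan} for $p$. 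Invoking Theorem \ref{TH bracket formulation} then upgrades this to \eqref{intrinsic.eq} for every Poisson $(n-1)$-form, completing the equivalence. The one point requiring care is that $G$-invariance constrains not only the pointwise values of the test fields $X$ but also their derivatives, so I would check explicitly that the derivative contributions of $X$ cancel between the two sides of \eqref{intrinsic.eq} exactly as in the unreduced bracket formulation, leaving only the pointwise equations on $p$.
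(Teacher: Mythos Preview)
Your core argument is exactly the paper's: the three identities you assemble---Theorem~\ref{kappa.is.poisson} for the bracket term, Lemma~\ref{red.dh} for the horizontal differential (with its curvature correction), and the equality $F\circ p=p^*F=p^*\kappa^*f=(\sigma\oplus\mu)^*f=f\circ(\sigma\oplus\mu)$ for the exact term---are precisely the ingredients the paper uses, in the same order, to obtain the per-form equivalence between \eqref{intrinsic.eq} and \eqref{intrinsic.red.eq}.

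Where you go beyond the paper is in the direction (ii)$\Rightarrow$(i). The paper's proof stops once the per-form equivalence is established and the surjectivity of $f\mapsto\kappa^*f$ onto $G$-invariant Poisson $(n-1)$-forms is noted; it does not explicitly argue why validity of \eqref{intrinsic.eq} for all $G$-invariant $F$ forces it for \emph{all} Poisson $(n-1)$-forms as stated in (i). Your observation that this is the delicate step is well taken, and your proposed closure---extracting the full Hamilton--Cartan system from $G$-invariant test fields (which span $VP$ pointwise) and then invoking Theorem~\ref{TH bracket formulation}---is correct and renders the equivalence rigorous. So your proof is the paper's argument, but with the logical gap in (ii)$\Rightarrow$(i) explicitly filled.
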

\begin{proof}
From Theorem \ref{kappa.is.poisson}, $\kappa$ is Poisson and $\{F,H\}=\kappa^*\{f,h\}$. Furthermore, from Lemma \ref{red.dh}, $d^hF=\kappa^*\left(d^hf-\left\langle\mu, \bar{\B}(Y,\bar{\Lambda})\right\rangle\right)$. Since every affine Poisson $(n-1)$-form $f$ can be obtained by reduction of a Poisson $(n-1)$-form $F$ on $\Pi_P$, we only need to prove that  $d(F\circ p)=d(f\circ(\sigma\oplus\mu))$.
As $F$ is horizontal, for any $v_1,\dots v_{n-1}$ vectors in $T_xM$
\begin{align*}
(p^*F)_x(v_1,\dots,v_{n-1})&=F_{p(x)}(T_xp(v_1),\dots,T_xp(v_{n-1}))=F_{p(x)}(v_1,\dots,v_{n-1})\\&=(F\circ p)_x(v_1,\dots,v_{n-1}).
\end{align*}
In an analogous way, $(\sigma\oplus\mu)^*f=f\circ(\sigma\oplus\mu)$. Consequently,
$$F\circ p=p^*F=p^*\kappa^*f=(\sigma\oplus\mu)^*f=f\circ(\sigma\oplus\mu),$$
coincide as $(n-1)$-forms on $M$ and $d(F\circ p)=d(f\circ(\sigma\oplus\mu))$.
\end{proof}

Let $E\to P$ be a vector bundle over a fiber bundle $P\to M$, and $\nabla$ an affine connection. The divergence $\mathrm{div}^{\nabla}\chi\in \Gamma(M,E^*)$ of a section $\chi\in\Gamma(M,TM\otimes E^*)$ satisfies
\begin{align*}
\mathrm{div}\langle\xi,\eta\rangle=\langle\mathrm{div}^{\nabla}\chi,\eta\rangle+\langle\chi,\tilde{\nabla}\eta\rangle,
\end{align*} for any $\eta\in\Gamma(M,E)$,
where $\mathrm{div}$ is the usual divergence of a vector field on $M$ (with respect to the volume form $\vol$). In particular, $\mathrm{div}^{\A}=\mathrm{div}^{\nabla^{\A}}$ is defined from $\Ad(P)\to \Sigma$ and the affine connection $\nabla^{\A}$ induced by the principal connection $\A$ on $P\to\Sigma$.
\begin{theorem} \label{th.intrinsic}
In the same setting as in Theorem \ref{reduction.intrinsic.eqs}, the following statement is equivalent to (i) and (ii):
\begin{itemize}[(iii)]
\item the section $\sigma\oplus\mu$ satisfies the equations:
\begin{align}
\label{eq:ver.intrinsic}
\mathrm{div}^{\A} \mu &=\ad^*_{\frac{\delta h}{\delta \mu}}\mu,\\
\label{eq:hor.intrinsic}
\{\theta_Y,h\}_{\Sigma}\vol &= d(\theta_Y\circ(\sigma\oplus\mu))-d^{h}\theta_Y\circ(\sigma\oplus\mu)+\left\langle\mu, \bar{\B}\left(Y,\bar{\Lambda}+\frac{\delta h}{\delta\sigma}\right)\right\rangle\vol,
\end{align}
for every $\theta_Y$ Poisson $(n-1)$-form on $\Pi_\Sigma\oplus(TM\otimes\g^*\otimes\bigwedge^{n-1}T^*M)$ defined by $Y\in\Gamma(V\Sigma\to M)$ as in Theorem \ref{red.n-1.forms}.
\end{itemize}
\end{theorem}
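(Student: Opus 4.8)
The plan is to prove the equivalence along the chain already set up: by Theorem~\ref{reduction.intrinsic.eqs} statements (i) and (ii) coincide, so it suffices to establish (ii)$\Leftrightarrow$(iii). I would exploit that both sides of the reduced identity \eqref{intrinsic.red.eq} are additive in the affine Poisson $(n-1)$-form $f$ and that, by Theorem~\ref{red.n-1.forms} and Proposition~\ref{n-1.formas}, every such $f$ decomposes as $f=\theta_Y+\theta_{\bar{\xi}}+\bar{\pi}^*\bar{\omega}+\bar{\Upsilon}$ with $Y\in\Gamma(V\Sigma\to M)$ and $\bar{\xi}\in\Gamma(\g\to\Sigma)$ chosen independently. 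I would therefore test \eqref{intrinsic.red.eq} separately against each building block: the $\theta_Y$ block will reproduce \eqref{eq:hor.intrinsic}, the family $\theta_{\bar{\xi}}$ (as $\bar{\xi}$ varies) will reproduce \eqref{eq:ver.intrinsic}, and the two remaining blocks must be shown to impose no further condition. Since $\theta_Y$ and $\theta_{\bar{\xi}}$ are themselves affine Poisson forms, the equivalence reduces to matching \eqref{intrinsic.red.eq} on these two families with the two equations of (iii).

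The horizontal half is essentially a rearrangement. Specializing \eqref{intrinsic.red.eq} to $f=\theta_Y$ (so $\bar{\xi}=0$) and inserting the definition \eqref{reduced bracket} of the reduced bracket yields $\{\theta_Y,h\}_\Sigma-\langle\mu,\bar{\B}(Y,\tfrac{\delta h}{\delta\sigma})\rangle$ on the left; moving the curvature term to the right and using the bilinearity of $\bar{\B}$ in its second slot to combine $\langle\mu,\bar{\B}(Y,\bar{\Lambda})\rangle+\langle\mu,\bar{\B}(Y,\tfrac{\delta h}{\delta\sigma})\rangle=\langle\mu,\bar{\B}(Y,\bar{\Lambda}+\tfrac{\delta h}{\delta\sigma})\rangle$ recovers \eqref{eq:hor.intrinsic} verbatim. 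For the closed block $\bar{\Upsilon}$ one checks, exactly as in the unreduced Theorem~\ref{TH bracket formulation}, that the corresponding instance of \eqref{intrinsic.red.eq} holds automatically: closedness forces $d^{h}\bar{\Upsilon}=0$, $d((\sigma\oplus\mu)^*\bar{\Upsilon})=0$ and $\chi_{\bar{\Upsilon}}=0$, so both sides vanish. For the pullback block $\bar{\pi}^*\bar{\omega}$ one shows the resulting equation is already implied by \eqref{eq:hor.intrinsic}: letting $Y$ depend on the fibre coordinates of $\Sigma$ makes $\theta_Y$ encode, through its $\sigma$-linear dependence, the first Hamilton--Cartan relation $\partial x^a/\partial x^i=\Lambda^a_i-\partial h/\partial\sigma^i_a$, which is precisely the content carried by $\bar{\pi}^*\bar{\omega}$.

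The vertical half is the main obstacle and requires the divergence identity preceding the theorem. Testing \eqref{intrinsic.red.eq} on $f=\theta_{\bar{\xi}}$ (so $Y=0$, killing the curvature term) I must show it collapses, for every $\bar{\xi}$, to the pairing of \eqref{eq:ver.intrinsic} with $\bar{\xi}$. On the right I would write $d(\theta_{\bar{\xi}}\circ(\sigma\oplus\mu))=d\langle\mu,\bar{\xi}\rangle=\mathrm{div}\langle\mu,\bar{\xi}\rangle\,\vol$ and apply the divergence formula to split it as $\langle\mathrm{div}^{\A}\mu,\bar{\xi}\rangle\vol+\langle\mu,\nabla^{\A}\bar{\xi}\rangle\vol$; the second summand should cancel $d^{h}\theta_{\bar{\xi}}\circ(\sigma\oplus\mu)$, since a direct computation with the connection $\Lambda^{\Pi_\Sigma}\oplus\Lambda^{\A,\Gamma}$ identifies the latter with $\langle\mu,\nabla^{\A}_{\hlift_\Lambda}\bar{\xi}\rangle$ up to the metric trace terms, the compatibility relation \eqref{compatibleP} being used to absorb the connection coefficients via $A^\beta_i+A^\beta_a\Lambda^a_i=\Lambda^\beta_i$. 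On the left the Lie--Poisson term of \eqref{reduced bracket} is rewritten by duality as $-\langle\mu,[\bar{\xi},\tfrac{\delta h}{\delta\mu}]\rangle=\langle\ad^*_{\delta h/\delta\mu}\mu,\bar{\xi}\rangle$. The delicate point is that the residual $\{\theta_{\bar{\xi}},h\}_\Sigma$ piece produces terms proportional to $\partial h/\partial\sigma^i_a$ that are absent from \eqref{eq:ver.intrinsic}; these must be removed by invoking the first Hamilton--Cartan equation already secured by \eqref{eq:hor.intrinsic}, which is legitimate because throughout the proof of (ii)$\Leftrightarrow$(iii) both equations of (iii) are simultaneously available. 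Once these $\tfrac{\delta h}{\delta\sigma}$ contributions cancel, stripping the arbitrary section $\bar{\xi}$ by non-degeneracy of the pairing yields \eqref{eq:ver.intrinsic}, completing the equivalence. I expect the careful bookkeeping of the $\partial h/\partial\sigma$ and Christoffel/trace terms to be where the real effort lies.
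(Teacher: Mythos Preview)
Your proposal follows the same strategy as the paper: split the affine Poisson $(n-1)$-forms into the $\theta_Y$ and $\theta_{\bar{\xi}}$ pieces, obtain \eqref{eq:hor.intrinsic} from the first by rearranging the curvature term, obtain \eqref{eq:ver.intrinsic} from the second after using the first Hamilton--Cartan relation, and close the loop by extracting that relation from \eqref{eq:hor.intrinsic} via the $\partial Y^b/\partial x^a$ (resp.\ $\partial\xi^\alpha/\partial x^a$) coefficients. The only substantive difference is stylistic: the paper carries out the vertical computation entirely in the adapted local coordinates of Section~\ref{SEC: symmetries}, whereas you aim for an intrinsic version via the divergence identity and the pairing with $\bar{\xi}$; either route works, but note the sign in the Hamilton--Cartan relation should be $\partial x^a/\partial x^i=\Lambda^a_i+\partial h/\partial\sigma^i_a$, and the cancellation you anticipate between $\langle\mu,\nabla^{\A}\bar\xi\rangle$ and $d^h\theta_{\bar\xi}$ really does require that relation (it is not a pure identity), exactly as in the paper's coordinate argument.
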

\begin{proof}
Let $f$ be an affine Poisson $(n-1)$-form described by $Y\oplus \bar{\xi}$ as in Theorem \ref{red.n-1.forms}. In the case that $\bar{\xi}=0$, $f=\theta_Y$ and equation \eqref{intrinsic.red.eq} in Theorem \ref{reduction.intrinsic.eqs} takes the form
\begin{equation}
\{\theta_Y,h\}_{\Sigma}\vol-\left\langle\mu,\bar{\B}\left(Y,\frac{\delta h}{\delta\sigma}\right)\right\rangle\vol = d(\theta_Y\circ(\sigma\oplus\mu))-d^{h}\theta_Y\circ(\sigma\oplus\mu)+\left\langle\mu, \bar{\B}(Y,\bar{\Lambda})\right\rangle\vol,
\end{equation}
from which equation \eqref{eq:hor.intrinsic} is obtained. In the case $Y=0$, $f=\xi^{\alpha}\mu^i_{\alpha}\vol_i$ and equation \eqref{intrinsic.red.eq} has the local expression
\begin{multline*}
\left(\frac{\partial \xi^{\alpha}}{\partial x^a}\mu^i_{\alpha}+\mu^i_{\gamma}c^{\gamma}_{\beta\alpha}A^{\beta}_a\xi^{\alpha}\right)\frac{\partial h}{\partial\sigma^i_a}+\mu^i_{\gamma}c^{\gamma}_{\beta\alpha}\xi^{\alpha}\frac{\partial h}{\partial \mu^i_{\beta}}=\\=\frac{\partial\xi^{\alpha}}{\partial x^i}\mu^i_{\alpha}+\frac{\partial \xi^{\alpha}}{\partial x^a}\mu^i_{\alpha}\frac{\partial x^a}{\partial x^i}+\xi^{\alpha}\frac{\partial\mu^i_{\alpha}}{\partial x^i}-\left(\frac{\partial\xi^{\alpha}}{\partial x^i}\mu^i_{\alpha}+\frac{\partial \xi^{\alpha}}{\partial x^a}\mu^i_{\alpha}\Lambda^a_i+\xi^{\alpha}\mu^i_{\gamma}c^{\gamma}_{\beta\alpha}\Lambda^{\beta}_i\right).
\end{multline*}
As $\bar{\xi}\in\Gamma(\Ad P)$ is arbitrary, the terms proportional to $\xi^{\alpha}$ provide
\begin{equation} \label{loc.eq.ver.1}
\mu^i_{\gamma}c^{\gamma}_{\beta\alpha}\frac{\partial h}{\partial \mu^i_{\beta}}+\mu^i_{\gamma}c^{\gamma}_{\beta\alpha}A^{\beta}_a\frac{\partial h}{\partial \sigma^i_a}=\frac{\partial \mu^i_{\alpha}}{\partial x^i}-\mu^i_{\gamma}c^{\gamma}_{\beta\alpha}\Lambda^{\beta}_i,
\end{equation}
while the terms proportional to $\partial\xi^{\alpha}/\partial x^a$ provide
\begin{equation}
\label{loc.eq.holo.1}
\frac{\partial h}{\partial \sigma^i_a}=\frac{\partial x^a}{\partial x^i}-\Lambda^a_i.
\end{equation}
Substitution of equation \eqref{loc.eq.holo.1} into equation \eqref{loc.eq.ver.1} gives
\begin{equation*}\label{loc.eq.ver.bis}
\mu^i_{\gamma}c^{\gamma}_{\beta\alpha}\frac{\partial h}{\partial \mu^i_{\beta}}=\frac{\partial \mu^i_{\alpha}}{\partial x^i}-\mu^i_{\gamma}c^{\gamma}_{\beta\alpha}A^{\beta}_i-\mu^i_{\gamma}c^{\gamma}_{\beta\alpha}A^{\beta}_b\frac{\partial x^b}{\partial x^i},
\end{equation*}
which is the local expression of equation \eqref{eq:ver.intrinsic}. To this point, we have proved that equation \eqref{intrinsic.red.eq} in statement (ii) is equivalent to the system of equations defined by \eqref{eq:ver.intrinsic}, \eqref{eq:hor.intrinsic} and \eqref{loc.eq.holo.1}. To end the proof, we must show that \eqref{loc.eq.holo.1} is a consequence of equation \eqref{eq:hor.intrinsic}. Indeed, equation \eqref{eq:hor.intrinsic} is written in local coordinates from
\begin{equation*}
\{\theta_Y,h\}_{\Sigma}=\frac{\partial Y^b}{\partial x^a}\sigma^i_b\frac{\partial h}{\partial \sigma^i_a}-\left(\frac{\partial h}{\partial x^a}+\mu^i_{\gamma}c^{\gamma}_{\beta\alpha}A^{\beta}_a\frac{\partial h}{\partial \mu^i_{\alpha}}\right)Y^a,
\end{equation*}
\begin{equation*}
d(\theta_Y\circ(\sigma\oplus\mu))=\sigma^i_a\frac{\partial Y^a}{\partial x^i}+\frac{\partial x^b}{\partial x^i}\frac{\partial Y^a}{\partial x^b}\sigma^i_a+Y^a\frac{\partial\sigma^i_a}{\partial x^i},
\end{equation*}
and
\begin{equation*}
d^{h}\theta_Y\circ(\sigma\oplus\mu)=\sigma^i_a\frac{\partial Y^a}{\partial x^i}+\Lambda^b_i\sigma^i_a\frac{\partial Y^a}{\partial x^b}-\frac{\partial\Lambda^b_i}{\partial x^a}\sigma^i_bY^a.
\end{equation*} While the curvature term has the following local expression:
\begin{equation*}
\bar{\B}\left(Y,\cdot\right)= -Y^a\left(\frac{\partial A^{\gamma}_a}{\partial x^i}-\frac{\partial A^{\gamma}_i}{\partial x^a}+c^{\gamma}_{\beta\alpha}A^{\beta}_{i}A^{\alpha}_a\right)dx^i-Y^a\left(\frac{\partial A^{\gamma}_a}{\partial x^b}-\frac{\partial A^{\gamma}_b}{\partial x^a}+c^{\gamma}_{\beta\alpha}A^{\beta}_{b}A^{\alpha}_a\right)dx^b.
\end{equation*}
As $Y\in\mathfrak{X}(\Sigma)$ is arbitrary, $Y^a$ and $\partial Y^b/\partial x^a$ in the local expression of equation \eqref{eq:hor.intrinsic} are arbitrary. As before, equation \eqref{loc.eq.holo.1} easily follows from the terms proportional to $\partial Y^b/\partial x^a$ while the terms proportional to $Y^a$ provide:
\begin{multline*}
-\left(\frac{\partial h}{\partial x^a}+\mu^i_{\gamma}c^{\gamma}_{\beta\alpha}A^{\beta}_a\frac{\partial h}{\partial\mu^i_{\alpha}}\right)=\frac{\partial \sigma^i_a}{\partial x^i}+\frac{\partial \Lambda^b_i}{\partial x^a}\sigma^i_b\nonumber\\
-\mu^i_{\gamma}\left(\frac{\partial A^{\gamma}_a}{\partial x^i}-\frac{\partial A^{\gamma}_i}{\partial x^a}+c^{\gamma}_{\beta\alpha}A^{\beta}_{i}A^{\alpha}_a+\left(\frac{\partial h }{\partial \sigma^i_b}+\Lambda^b_i\right)\left(\frac{\partial A^{\gamma}_a}{\partial x^b}-\frac{\partial A^{\gamma}_b}{\partial x^a}+c^{\gamma}_{\beta\alpha}A^{\beta}_{b}A^{\alpha}_a\right)\right).
\end{multline*}
\end{proof}

\begin{remark}
When $P\to M$ is a principal bundle and the reduction is performed by its structure group $G$, we have $\Sigma = M$, and equation \eqref{eq:hor.intrinsic} is trivial, that is, the only reduced equation is \eqref{eq:ver.intrinsic}. This is the case studied in \cite{someremarks} and \eqref{eq:ver.intrinsic} is the so-called Lie-Poisson equation.

With respect to equation \eqref{eq:hor.intrinsic}, we could regard it as the bracket formulation of the Hamilton equations on the polysymplectic bundle of $\Sigma \to M$, now defined by a new bracket $\{,\}_{\Sigma}$ together with an additional curvature term. Both perturbations collect the fact that the Hamiltonian is actually defined on a product bundle  $\Pi_\Sigma\oplus(TM\otimes\g^*\otimes\bigwedge^{n-1}T^*M)$.

In short, Theorem \ref{th.intrinsic} provides the bracket formulation of the horizontal and vertical equations of a reduced Hamiltonian system in analogy with the vertical and horizontal Lagrange--Poincar\'e equations obtained in Lagrangian reduction \cite{EllisLP}. We explore this in the next section.
\end{remark}

\begin{remark} \label{local PP eqs} From the proof of Theorem \ref{th.intrinsic}, the local expressions of the Poisson-Poincar\'e equations are
\begin{align}
\label{loc.eq.ver}
\frac{\partial \mu^i_{\alpha}}{\partial x^i}-\mu^i_{\gamma}c^{\gamma}_{\beta\alpha}A^{\beta}_i-\mu^i_{\gamma}c^{\gamma}_{\beta\alpha}A^{\beta}_b\frac{\partial x^b}{\partial x^i}&=\mu^i_{\gamma}c^{\gamma}_{\beta\alpha}\frac{\partial h}{\partial \mu^i_{\beta}},\\
\label{loc.eq.holo}
\frac{\partial h}{\partial \sigma^i_a}&=\frac{\partial x^a}{\partial x^i}-\Lambda^a_i, \\
-\left(\frac{\partial h}{\partial x^a}+\mu^i_{\gamma}c^{\gamma}_{\beta\alpha}A^{\beta}_a\frac{\partial h}{\partial\mu^i_{\beta}}\right)&=\frac{\partial \sigma^i_a}{\partial x^i}+\frac{\partial \Lambda^b_i}{\partial x^a}\sigma^i_b\nonumber\\
\label{loc.eq.hor}-\mu^i_{\gamma}\left(\frac{\partial A^{\gamma}_a}{\partial x^i}-\frac{\partial A^{\gamma}_i}{\partial x^a}+c^{\gamma}_{\beta\alpha}\right.&\left.A^{\beta}_{a}A^{\alpha}_i+\frac{\partial x^b}{\partial x^i}\left(\frac{\partial A^{\gamma}_a}{\partial x^b}-\frac{\partial A^{\gamma}_b}{\partial x^a}+c^{\gamma}_{\beta\alpha}A^{\beta}_{a}A^{\alpha}_b\right)\right).
\end{align}
\end{remark}


\section{Legendre Transformation} \label{SEC: Legendre}
The covariant Lagrange--Poincar\'e reduction studied in \cite{cov.red,EllisLP} is the Lagrangian picture of the Poisson--Poincar\'e reduction developed in this paper. Let $\mathcal{L}:J^1P\to\bigwedge^{n}T^*M$ be an hyper-regular Lagrangian density and $L:J^1P\to\R$ such that $\mathcal{L}=L\vol$, the \em covariant Legendre transformation, \em $\F\mathcal{L}:J^1P\to J^1P^*$ defined as the first-order vertical Taylor approximation to $\mathcal{L}$,
\begin{equation}
\F\mathcal{L}(j^1s)(j^1s')=\mathcal{L}(j^1s)+\frac{d}{d\varepsilon}\bigg|_{\varepsilon =0}\mathcal{L}(j^1s+\varepsilon(j^1s'-j^1s))
\end{equation}
is not an isomorphism as $\dim(J^1P^*)=\dim(J^1P)+1$. Yet, the linear Legendre transformation
\begin{equation}
\widehat{\F\mathcal{L}}(j^1s)(\vartheta)=\frac{d}{d\varepsilon}\bigg|_{\varepsilon =0}\mathcal{L}(j^1s+\varepsilon\vartheta),
\end{equation}
where $\vartheta \in T^*M\otimes VP$, defines a diffeomorphism between $J^1P$ and $\Pi_P.$ The Hamiltonian system $(\Pi_P,\F\mathcal{L}\circ\widehat{\F\mathcal{L}}^{-1})$ is equivalent to the Lagrangian system defined by $\mathcal{L}$. Provided an Ehresmann connection $\Lambda$ on $P\to M$, the system is described by the Hamiltonian
\begin{equation}\label{eq: equiv ham}
\mathcal{H}(p)=\left\langle p, \widehat{\F\mathcal{L}}^{-1}(p)-\Lambda\right\rangle-\mathcal{L}\circ \widehat{\F\mathcal{L}}^{-1}(p)
\end{equation}
which in local coordinates can be written as $\mathcal{H}=H\vol$ where $H=p^i_a(y^a_i-\Lambda^a_i)-L$. The map
\begin{align*}
\widehat{\F \mathcal{H}}:\Pi_P&\to T^*M\otimes VP \\
p &\mapsto \widehat{\F \mathcal{H}}(p)(\varrho)=\left.\frac{d}{d\varepsilon}\right\vert_{\varepsilon =0}\mathcal{H}(p+\varepsilon \varrho\vol)
\end{align*}
is called \em inverse Lagrange transformation \em since
\begin{center}
\begin{tikzpicture}[scale=1.5]
\node (A) at (0,0) {$T^*M\otimes VP$};
\node (B) at (0,1) {$J^1P$};
\node (C) at (1.5,1) {$\Pi_P$};
\draw[->,font=\scriptsize,>=angle 90]
(B) edge node[left]{$F_\Lambda$} (A);
\draw[->,font=\scriptsize,>=angle 90]
(B) edge node[above]{$\widehat{\F\mathcal{L}}$} (C);
\draw[->,font=\scriptsize,>=angle 90]
(C) edge node[right]{$\widehat{\F \mathcal{H}}$} (A);
\end{tikzpicture}
\end{center}
where $F_\Lambda$ is the linearization of the affine bundle $J^1P\to P$ taking $\Lambda$ as the zero section.

We shall see that Poisson--Poincar\'e reduction provides a reduced Legendre transformation. Suppose that $\mathcal{L}$ is $G$-invariant. On one hand, the linear Legendre transformation $\widehat{\F\mathcal{L}}:J^1P\to\Pi_P$ is $G$-equivariant: for every $g\in G$
\begin{equation*}
\widehat{\F\mathcal{L}}(gj^1s)(g\vartheta)=\left.\frac{d}{d\varepsilon}\right\vert_{\varepsilon =0}\mathcal{L}(gj^1s+\varepsilon(g\vartheta))=\left.\frac{d}{d\varepsilon}\right\vert_{\varepsilon =0}\mathcal{L}(j^1s+\varepsilon\vartheta)=\widehat{\F\mathcal{L}}(j^1s)(\vartheta).
\end{equation*}
Thus, there exists a reduced map $\big[\widehat{\F\mathcal{L}}\big]_G:J^1P/G\to\Pi_P/G$. On the other hand, the reduced Lagrangian $l$ defines the map
\begin{align*}
\widehat{\F l}: J^1P/G &\to \Pi_P/G \\
[j^1s] &\mapsto \widehat{\F l}\left([j^1s]\right)\left([\vartheta]\right)=\left.\frac{d}{d\varepsilon}\right\vert_{\varepsilon =0}l\left([j^1s]+\varepsilon [\vartheta]\right)\vol.
\end{align*}
Then, we have that $\widehat{\F l}=\big[\widehat{\F\mathcal{L}}\big]_G$ since
\begin{multline*}
\big[\widehat{\F\mathcal{L}}\big]_G\left([j^1s]\right)([\vartheta])=
\widehat{\F\mathcal{L}}(j^1s)(\vartheta)=\left.\frac{d}{d\varepsilon}\right\vert_{\varepsilon =0}\mathcal{L}\left(j^1s+\varepsilon \vartheta\right)\\=\left.\frac{d}{d\varepsilon}\right\vert_{\varepsilon =0}l\left([j^1s+\varepsilon\vartheta]\right)\vol=\left.\frac{d}{d\varepsilon}\right\vert_{\varepsilon =0}l\left([j^1s]+\varepsilon [\vartheta]\right)\vol=\widehat{\F l}\left([j^1s]\right)\left([\vartheta]\right).
\end{multline*}
Furthermore, the Hamiltonian in equation \eqref{eq: equiv ham} is $G$-invariant and there exists a reduced Hamiltonian $h:\Pi_P/G\to \bigwedge^nT^*M$. It is similarly proven that $\widehat{\F \mathcal{H}}$ is $G$-equivariant and $\widehat{\F h}=\big[\widehat{\F \mathcal{H}}\big]_G$.

Suppose that the Ehresmann connection $\Lambda$ is $G$-invariant, then there is a $G$-invariant section of $J^1P\to P$ which reduces to a section $[\Lambda]$ of $J^1P/G\to\Sigma$. Then, the linearization map $F_{\Lambda}$ is $G$-equivariant and $\left[F_{\Lambda}\right]_G=F_{[\Lambda]}$, the linearization of $J^1P/G$ taking $[\Lambda]$ as the zero section. In addition,
\begin{align}\label{redlegendreham}
h([p])&=H(p)=\left\langle [p],\big[\widehat{\F\mathcal{L}}^{-1}(p)-\Lambda\big]\right\rangle\vol^*-l\big[\widehat{\F\mathcal{L}}^{-1}(p)\big]\nonumber \\&
=\big\langle [p],\widehat{\F l}^{-1}([p])-[\Lambda]\big\rangle\vol^*-l\big(\widehat{\F l}^{-1}([p])\big)
\end{align}
and $h$ can be obtained from $\widehat{\F l}$ and $[\Lambda]$. In conclusion, the following diagram commutes
\begin{center}
\begin{tikzpicture}[scale=1.5]
\node (A) at (0,0) {$T^*M\otimes (VP)/G$};
\node (B) at (0,1.) {$J^1P/G$};
\node (C) at (1.5,1) {$\Pi_P/G$};
\draw[->,font=\scriptsize,>=angle 90]
(B) edge node[left]{$F_{[\Lambda]}$} (A);
\draw[->,font=\scriptsize,>=angle 90]
(B) edge node[above]{$\widehat{\F l}$} (C);
\draw[->,font=\scriptsize,>=angle 90]
(C) edge node[right]{$\widehat{\F h}$} (A);
\end{tikzpicture}
\end{center}
The map $\widehat{\F l}$ is called \em reduced Legendre transform \em and $\widehat{\F h}$ is called \em reduced reverse Legendre transform.\em

Given a compatible principal connection $\A$ on $P\to\Sigma$ in the sense of Definition \ref{compatible} and adapted local coordinates as in Section \ref{SEC: symmetries}, $\F l$ has the following local expression
\begin{equation*}
\F l\left(y^a_ i,\nu^{\alpha}_i\right)=\left(\sigma^i_a=\frac{\partial l}{\partial y^a_i}(y^a_ i,\nu^{\alpha}_i),\mu^i_{\alpha}=\frac{\partial l}{\partial \nu^{\alpha}_i}(y^a_ i,\nu^{\alpha}_i)\right),
\end{equation*}
and the reduced Hamiltonian in equation \eqref{redlegendreham} is
\begin{equation}\label{eq: redhamlegendre}
h(x^i,x^a,\sigma^i_a,\mu^i_{\alpha})=-l(x^i,x^a,y^a_i,\nu^{\alpha}_i)+\sigma^i_a(y^a_i-\Lambda^a_i)+\mu^{\alpha}_i\nu^{\alpha}_i,
\end{equation}
where $(y^a_ i,\nu^{\alpha}_i)=\widehat{\F l}^{-1}(\sigma^i_a,\mu^i_{\alpha})$. The Poisson--Poincar\'e equations \eqref{loc.eq.ver}, \eqref{loc.eq.holo} and \eqref{loc.eq.hor}  for $h$ in \eqref{eq: redhamlegendre} provide the Lagrange--Poincar\'e equations for $l$. More precisely, since equations \eqref{loc.eq.holo} for that Hamiltonian imposes that the solution of the Lagrangian system must be an holonomic section, the resulting equations can be interpreted as a field theoretic counterpart of the implicit Lagrange--Poincar\'e equations provided in \cite{DiracCotangent} in Mechanics. Obtaining these implicit Lagrange--Poincar\'e equations by reduction of systems as those introduced in \cite{HPmultiDirac} may prove to be an interesting future work.

\section{Example: SO(3)-strand} \label{SEC: example}
A $G$-strand is a map into a Lie group $G$, $g(t, s) :\R^2 \to G$, whose dynamics in $(t,s)\in\R^2$ may be obtained from Hamilton’s principle for a $G$-invariant Lagrangian. These systems and their integrability have been extensively studied in, for example, \cite{matrixgstrands, gstrands}. The equations of motion of these $G$-strands can be interpreted as the dynamics of a continuous spin chains. In the particular case where $G=SO(3)$, we may think of the strand as a continuous chain of rigid bodies (See \cite{MolStrand} for more details). We now present a Hamiltonian system corresponding to an $SO(3)$-strand in which the $SO(3)$ symmetry is broken in one specific direction $e_3$.

Consider the trivial bundle $P=\R^2 \times SO(3)$ over $\R^2$ with projection $(t,s,R)\mapsto x=(t,s)$ and the polysymplectic bundle
\begin{equation*}
\Pi_P=T\R^2\otimes V^*(\R^2 \times SO(3))\otimes\bigwedge^{2}T^*\R^2 = T\R^2\otimes T^*SO(3)\otimes\bigwedge^2T^*\R^2.
\end{equation*}
Any section of $\Pi_P\to\R^2$ can be written as
$$\frac{\partial}{\partial t} \otimes p^t(x) \otimes (dt\wedge ds)+\frac{\partial}{\partial s} \otimes p^s(x) \otimes (dt\wedge ds),$$
where $p^t$, $p^s:\R^2 \to T^*SO(3)$.

We write the Hamiltonian modeling the time evolution of a charged molecular or spin strand subject to an uniform electric field. We regard the strand as a chain of rigid bodies parametrized by $s$ and equipped with a dipole electric momentum. This Hamiltonian system is defined in $\Pi_P$ via the trivial Ehresmann connection $\Lambda:T\R^2\to TP$ on the product bundle $P=\R^2 \times SO(3)\to\R^2$ as
\begin{equation}\label{EX Hamiltonian}
H(R, p^t, p^s)=\frac{1}{2}\langle RI^{-1}R^{-1}p^t,p^t\rangle-\frac{1}{2}\langle RJ^{-1}R^{-1}p^s,p^s\rangle+e e_3\cdot R\chi,
\end{equation}
where $I$ is the inertia tensor of the rigid body, $J$ is a tensor describing the opposition to the rotation of consecutive rigid bodies of the strand, $\chi\in\R^3$ is the electric dipole of the rigid body configuration reference, and $\mathbf{E}=ee_3$ is a uniform electric field in the  direction $e_3$ of the spatial frame of reference. Since
\begin{equation}
\frac{\partial H}{\partial p^t}=RI^{-1}R^{-1}p^t,
\end{equation}
\begin{equation}
\frac{\partial H}{\partial p^s}=-RJ^{-1}R^{-1}p^s,
\end{equation}
and $\partial H/\partial R=ee_3\times R\chi$. The local Hamilton--Cartan equations for this system are:
\begin{equation}
R_t=RI^{-1}R^{-1}p^t,
\end{equation}
\begin{equation}
R_s=-RJ^{-1}R^{-1}p^s,
\end{equation}
\begin{equation}
ee_3\times R\chi=-\partial_tp^t-\partial_sp^s
\end{equation}
There is a left $SO(3)$ action on $P=\R^2\times SO(3)$ by left translations on the second factor which is lifted to $\Pi_P$. Since for any $Q\in SO(3)$,
\begin{align*}
H(Q&\cdot(R, p^t, p^s))=H(QR, Qp^t, Qp^s)=\\ &=\frac{1}{2}\langle QRI^{-1}R^{-1}Q^{-1}Qp^t,Qp^t\rangle-\frac{1}{2}\langle QRJ^{-1}R^{-1}Q^{-1}Qp^s,Qp^s\rangle+e e_3\cdot QR\chi\\&=\frac{1}{2}\langle RI^{-1}R^{-1}p^t,p^t\rangle-\frac{1}{2}\langle RJ^{-1}R^{-1}p^s,p^s\rangle+e Q^{-1}e_3\cdot R\chi,
\end{align*}
the Hamiltonian system is not $SO(3)$ invariant. Yet, it is invariant by the action of rotations around the axis $e_3$. In other words, the Hamiltonian system is $\sS^1$-invariant. There is a  principal bundle, $P\to P/\sS^1$, with projection
\begin{align*}
P=\R^2\times SO(3)&\to \Sigma=P/\sS^1=\R^2\times\sS^2\\
(t,s,R)&\mapsto  (t,s,\zeta=R^{-1}e_3).
\end{align*}
Let $v\in T_RSO(3)\simeq \mathbb{R}^3$, where the identification is given by the hat map. Then, $v$ can be decomposed in parallel and perpendicular components with respect to $e_3$ as $v=\langle v,e_3\rangle e_3-e_3\times(e_3\times v).$ It can easily be obtained that $T\pi_{P,\Sigma}(v)=R^{-1}(e_3\times v)$. Hence, the mechanical connection is given by $\A(v)=\langle v, e_3\rangle$ and for any $v_{\zeta}\in T_{\zeta}\sS^2$ its horizontal lift to $R$ is $v_{\zeta}^{h,R}=-e_3\times R v_{\zeta}=R(-\zeta\times v_{\zeta})$.

This connection allows to identify $\Pi_P/\sS^1$ with $\Pi_\Sigma\oplus( T\R^2\otimes \R\otimes\bigwedge^2T^*\R^2)=\Pi_\Sigma\oplus T^*\R^2$ as in Proposition \ref{quotient.ident}. We now obtain the reduced multimomenta with the projection defined in that Proposition. For $\alpha\in T^*_RSO(3)$ and $v_{\zeta}\in T_{\zeta}\sS^2$,
\begin{equation*}
\langle \hlift^*_{\A}(\alpha),v_{\zeta}\rangle =\langle\alpha,R(-\zeta\times v_{\zeta})\rangle=
\langle  v_{\zeta} ,\zeta \times R^{-1}\alpha\rangle.
\end{equation*}
Thus, $\hlift^*_{\A}(\alpha)=\zeta \times R^{-1}\alpha$. Let $a\in \R$, an element in the Lie algebra of $\sS^1$. Then,
\begin{equation*}
\langle J(\alpha),a\rangle =\langle\alpha,a_R\rangle= a \langle \alpha,(e_3)_R \rangle = a \langle R^{-1}\alpha, (R^{-1}e_3)_e \rangle= a \langle R^{-1}\alpha, \zeta \rangle.
\end{equation*}
Thus, $J(\alpha)=\langle R^{-1}\alpha, \zeta \rangle$. In conclusion, the multimomenta $p^t, p^s$ in $\Pi_P$ reduce to
\begin{equation}
\sigma^t=\zeta\times R^{-1}p^t, \hspace{5mm }\sigma^s=\zeta\times R^{-1}p^s,
\end{equation}
\begin{equation}
\mu^t=\langle\zeta, R^{-1}p^t\rangle, \hspace{5mm } \mu^s=\langle\zeta, R^{-1}p^s\rangle.
\end{equation}
Observe that for $i=t,s$, $R^{-1}p^i$ can be decomposed in a parallel and perpendicular to $\zeta$ in terms of the reduced multimomenta $\sigma^i$, $\mu^i$. Indeed,
\begin{equation*}
R^{-1}p^i=\mu^i\zeta-\zeta\times \sigma^i,
\end{equation*}
and substitution into $H$ provides the reduced Hamiltonian
\begin{multline}
h(\zeta,\sigma^t,\sigma^s,\mu^t,\mu^s)=\frac{1}{2}(\mu^t)^2 \langle\zeta,I^{-1}\zeta\rangle+\mu^t\langle  \sigma^t,\zeta \times I^{-1}\zeta\rangle+ \frac{1}{2}\langle\zeta\times\sigma^t,I^{-1}(\zeta\times\sigma^t)\rangle\\
-\frac{1}{2}(\mu^s)^2 \langle\zeta,J^{-1}\zeta\rangle-\mu^s\langle  \sigma^s,\zeta \times J^{-1}\zeta\rangle- \frac{1}{2}\langle\zeta\times\sigma^s,J^{-1}(\zeta\times\sigma^t)\rangle+e\zeta\cdot\xi.
\end{multline}
The equations of motion are given by the local Poisson--Poincar\'e equations gathered in Remark \ref{local PP eqs};
{\fontsize{9 pt}{10pt}\selectfont
\begin{align}
\zeta_t&=\zeta\times I^{-1}(\mu^t\zeta-\zeta\times\sigma^t),\label{eq string holo t}\\
\zeta_s&=-\zeta\times J^{-1}(\mu^s\zeta-\zeta\times\sigma^s),\label{eq string holo s}\\
\partial_t\sigma^t+\partial_s\sigma^s&=-\zeta\times\sigma^t\langle\zeta,I^{-1}(\mu^t\zeta-\zeta\times\sigma^t)\rangle+\zeta\times\sigma^s\langle\zeta,J^{-1}(\mu^s\zeta-\zeta\times\sigma^s)\rangle-e\chi^{\parallel},\label{eq string hor}\\
\partial_t\mu^t+\partial_s\mu^s&=0, \label{eq string ver}
\end{align}}
where $\chi^{\parallel}=\chi-\zeta\langle\xi,\zeta\rangle$ is the ortogonal projection of $\chi$ to $T_{\zeta}\sS^2$. Equations \eqref{eq string holo t} and \eqref{eq string holo s} are the particularization of equation \eqref{loc.eq.holo}, and equation \eqref{eq string ver} is the vertical equation obtained from \eqref{loc.eq.ver}. The horizontal equation \eqref{eq string hor} is obtained from \eqref{loc.eq.hor} using that $\B(\zeta_t,\cdot)=-\zeta\times\zeta_t.$ In the special case $I=J=\mathrm{Id}_{\R^3}$ and $\chi=0$, equations (\ref{eq string holo t}-\ref{eq string ver}) are the decomposition in terms parallel and perpendicular to $\zeta$ of the equations of motion of the classical chiral model for the $SO(3)$-strand and the equations for harmonic maps from $\R^2$ to $SO(3)$ studied in \cite{Uhlenbeck}.

This Hamiltonian system can be formulated from the Lagrangian point of view via the hyperregular $\sS^1$-invariant Lagrangian
\begin{equation}
L(R,R_t,R_s)=\frac{1}{2}\langle R^{-1}R_t,IR^{-1}R_t\rangle-\frac{1}{2}\langle R^{-1}R_s,JR^{-1}R_s\rangle-ee_3\cdot R\chi
\end{equation}
defined in $J^1(\R^2\times SO(3))$. The multimomenta
\begin{equation}
p^t=\frac{\partial L}{\partial R_t}= RIR^{-1}R_t, \hspace{3mm} p^s=\frac{\partial L}{\partial R_s}=-RJR^{-1}R_s,
\end{equation}
and the Hamiltonian in equation \eqref{EX Hamiltonian} can be obtained from the Legendre transform
$H=\langle p^t, R_t\rangle +\langle p^s, R_s\rangle -L.$ The $\sS^1$-invariant Lagrangian can be reduced to a Lagrangian $l$ on $J^1(\R^2\times\sS^2)\oplus(T^*\R^2\otimes(\R^2\times\sS^2\times \R))$ as explained in \cite{cov.red, EllisLP}. Indeed,
\begin{align*}
l(\zeta,\zeta_t,\zeta_s,\eta,\xi)=&\frac{1}{2}\eta^2 \langle \zeta, I\zeta\rangle+\eta\langle \zeta, I(\zeta_t\times\zeta)\rangle +\frac{1}{2} \langle \zeta_t\times\zeta, I(\zeta_t\times\zeta)\rangle \\
&-\frac{1}{2}\xi^2 \langle \zeta, J\zeta\rangle-\xi\langle \zeta, J(\zeta_s\times\zeta)\rangle -\frac{1}{2} \langle \zeta_s\times\zeta, J(\zeta_s\times\zeta)\rangle
-e\zeta\cdot\chi,
\end{align*}
where $\eta=\A(R_t)$, $\xi=\A(R_s)$. In turn, the momenta $\sigma^t, \sigma^s, \mu^t, \mu^s$ can be obtained from vertical derivatives of $l$,
\begin{equation}
\sigma^t=\frac{\partial l}{\partial \zeta_t}= \zeta\times (\eta I\zeta+I(\zeta_t\times\zeta) ), \hspace{3mm} \sigma^s=\frac{\partial l}{\partial \zeta_s}=-\zeta\times (\xi J\zeta+J(\zeta_s\times\zeta) ),
\end{equation}
\begin{equation}
\mu^t=\frac{\partial l}{\partial \eta}= \langle\zeta,\eta I\zeta+I(\zeta_t\times\zeta)\rangle, \hspace{3mm} \mu^s=\frac{\partial l}{\partial \xi}=-\langle\zeta,\xi J\zeta+J(\zeta_s\times\zeta)\rangle,
\end{equation}
and the reduced Hamiltonian $h$ can be deduced from the Legendre transform $h=\langle\sigma^t,\zeta_t\rangle+\langle\sigma^s,\zeta_s\rangle+\mu^t\eta+\mu^s\xi-l$.

\section*{Acknowledgements}
Both authors have been partially supported by Agencia Nacional de Investigaci\'on, Spain, under grants no. PGC2018-098321-B-I00 and PID2021-126124NB-I00. MAB has been partially supported by Ministerio de Universidades (Spain) under grant no. FPU17-02701.
\bibliography{references}{}

\begin{thebibliography}{10}

\bibitem{BC}
M.~A. Berbel and M.~Castrill\'{o}n~L\'{o}pez.
\newblock Reduction (by stages) in the whole {L}agrange--{P}oincar\'{e}
  category.
\newblock \href{https://arxiv.org/abs/1912.10763}{arXiv:1912.10763}, 2019.

\bibitem{FTLP}
M.~A. Berbel and M.~Castrill\'{o}n~L\'{o}pez.
\newblock Lagrangian reduction by stages in field theory.
\newblock \href{https://arxiv.org/abs/2007.14854}{arXiv: 2007.14854}, 2020.

\bibitem{RedMultisymp}
C.~Blacker.
\newblock Reduction of multisymplectic manifolds.
\newblock {\em Lett. Math. Phys.}, {\bfseries 111}(3):Paper No. 64, 30, 2021.

\bibitem{MultiGroupoids}
H.~Bursztyn, A.~Cabrera, and D.~Iglesias.
\newblock Multisymplectic geometry and {L}ie groupoids.
\newblock In {\em Geometry, mechanics, and dynamics}, volume \textbf{73~}of
  {\em Fields Inst. Commun.}, pages 57--73. Springer, New York, 2015.

\bibitem{RouthPolySym}
S.~Capriotti, V.~A. Díaz, E.~G.-T. Andrés, and T.~Mestdag.
\newblock Cotangent bundle reduction and routh reduction for polysymplectic
  manifolds, 2022.

\bibitem{ibort}
J.~F. Cari\~{n}ena, M.~Crampin, and L.~A. Ibort.
\newblock On the multisymplectic formalism for first order field theories.
\newblock {\em Differential Geom. Appl.}, {\bfseries 1}(4):345--374, 1991.

\bibitem{EPsubgroup}
M.~Castrill\'{o}n~L\'{o}pez, P.~L. Garc\'{\i}a, and C.~Rodrigo.
\newblock Euler-{P}oincar\'{e} reduction in principal bundles by a subgroup of
  the structure group.
\newblock {\em J. Geom. Phys.}, {\bfseries 74}:352--369, 2013.

\bibitem{EPFT2001}
M.~Castrill\'{o}n~L\'{o}pez, P.~L. Garc\'{\i}a~P\'{e}rez, and T.~S. Ratiu.
\newblock Euler-{P}oincar\'{e} reduction on principal bundles.
\newblock {\em Lett. Math. Phys.}, {\bfseries 58}(2):167--180, 2001.

\bibitem{someremarks}
M.~Castrill\'{o}n~L\'{o}pez and J.~E. Marsden.
\newblock Some remarks on {L}agrangian and {P}oisson reduction for field
  theories.
\newblock {\em J. Geom. Phys.}, {\bfseries 48}(1):52--83, 2003.

\bibitem{cov.red}
M.~Castrill\'{o}n~L\'{o}pez and T.~S. Ratiu.
\newblock Reduction in principal bundles: covariant {L}agrange-{P}oincar\'{e}
  equations.
\newblock {\em Comm. Math. Phys.}, {\bfseries 236}(2):223--250, 2003.

\bibitem{CMR}
H.~Cendra, J.~E. Marsden, and T.~S. Ratiu.
\newblock Lagrangian reduction by stages.
\newblock {\em Mem. Amer. Math. Soc.}, {\bfseries 152}(722):1--108, 2001.

\bibitem{SolsHamFT1}
F.~M. Ciaglia, F.~Di~Cosmo, A.~Ibort, L.~Marmo, G.~Schiavone, and A.~Zampini.
\newblock The geometry of the solution space of first order hamiltonian field
  theories {I}: from particle dynamics to free electrodynamics.
\newblock \href{https://arxiv.org/abs/2208.14155}{arXiv: 2208.14136}, 2022.

\bibitem{SolsHamFT2}
F.~M. Ciaglia, F.~Di~Cosmo, A.~Ibort, L.~Marmo, G.~Schiavone, and A.~Zampini.
\newblock The geometry of the solution space of first order hamiltonian field
  theories {II}: non-abelian gauge theories.
\newblock \href{https://arxiv.org/abs/2208.14155}{arXiv: 2208.14155}, 2022.

\bibitem{LieSystemsMulti}
J.~de~Lucas, X.~Gr\`acia, X.~Rivas, N.~Rom\'{a}n-Roy, and S.~Vilari\~{n}o.
\newblock Reduction and reconstruction of multisymplectic {L}ie systems.
\newblock {\em J. Phys. A}, {\bfseries 55}(29):Paper No. 295204, 34, 2022.

\bibitem{RemarksMultiSymp}
A.~Echeverr\'{\i}a-Enr\'{\i}quez, M.~C. Mu\~{n}oz Lecanda, and
  N.~Rom\'{a}n-Roy.
\newblock Remarks on multisymplectic reduction.
\newblock {\em Rep. Math. Phys.}, {\bfseries 81}(3):415--424, 2018.

\bibitem{MolStrand}
D.~C.~P. Ellis, F.~Gay-Balmaz, D.~D. Holm, V.~Putkaradze, and T.~S. Ratiu.
\newblock Symmetry reduced dynamics of charged molecular strands.
\newblock {\em Arch. Ration. Mech. Anal.}, {\bfseries 197}(3):811--902, 2010.

\bibitem{EllisLP}
D.~C.~P. Ellis, F.~Gay-Balmaz, D.~D. Holm, and T.~S. Ratiu.
\newblock Lagrange-{P}oincar\'{e} field equations.
\newblock {\em J. Geom. Phys.}, {\bfseries 61}(11):2120--2146, 2011.

\bibitem{gotay}
M.~J. Gotay, J.~Isenberg, J.~E. Marsden, and R.~Montgomery.
\newblock Momentum maps and classical relativistic fields. part {I}: Covariant
  field theory.
\newblock \href{https://arxiv.org/abs/physics/9801019}{arXiv: 9801019}, 2004.

\bibitem{Gunther}
C.~G\"{u}nther.
\newblock The polysymplectic {H}amiltonian formalism in field theory and
  calculus of variations. {I}. {T}he local case.
\newblock {\em J. Differential Geom.}, {\bfseries 25}(1):23--53, 1987.

\bibitem{matrixgstrands}
D.~D. Holm and R.~I. Ivanov.
\newblock Matrix {G}-strands.
\newblock {\em Nonlinearity}, {\bfseries 27}(6):1445--1469, 2014.

\bibitem{gstrands}
D.~D. Holm, R.~I. Ivanov, and J.~R. Percival.
\newblock {$G$}-strands.
\newblock {\em J. Nonlinear Sci.}, {\bfseries 22}(4):517--551, 2012.

\bibitem{PoissonForms}
I.~V. Kanatchikov.
\newblock Canonical structure of classical field theory in the polymomentum
  phase space.
\newblock {\em Rep. Math. Phys.}, {\bfseries 41}(1):49--90, 1998.

\bibitem{Madsen12}
T.~B. Madsen and A.~Swann.
\newblock Multi-moment maps.
\newblock {\em Adv. Math.}, {\bfseries 229}(4):2287--2309, 2012.

\bibitem{Madsen13}
T.~B. Madsen and A.~Swann.
\newblock Closed forms and multi-moment maps.
\newblock {\em Geom. Dedicata}, {\bfseries 165}:25--52, 2013.

\bibitem{RedPolySymp}
J.~C. Marrero, N.~Rom\'{a}n-Roy, M.~Salgado, and S.~Vilari\~{n}o.
\newblock Reduction of polysymplectic manifolds.
\newblock {\em J. Phys. A}, {\bfseries 48}(5):055206, 43, 2015.

\bibitem{MWReduction}
J.~Marsden and A.~Weinstein.
\newblock Reduction of symplectic manifolds with symmetry.
\newblock {\em Rep. Mathematical Phys.}, {\bfseries 5}(1):121--130, 1974.

\bibitem{RedstagesPoisson}
J.~E. Marsden, G.~Misiolek, J.-P. Ortega, M.~Perlmutter, and T.~S. Ratiu.
\newblock {\em Hamiltonian Reduction by Stages}, volume \textbf{1913~}of {\em
  Lecture Notes in Mathematics}.
\newblock Springer, Berlin, Heidelberg, 1 edition, 2007.

\bibitem{CovariantPoisson}
J.~E. Marsden, R.~Montgomery, P.~J. Morrison, and W.~B. Thompson.
\newblock Covariant {P}oisson brackets for classical fields.
\newblock {\em Ann. Physics}, {\bfseries 169}(1):29--47, 1986.

\bibitem{PoissonReduction}
J.~E. Marsden and T.~Ratiu.
\newblock Reduction of {P}oisson manifolds.
\newblock {\em Lett. Math. Phys.}, {\bfseries 11}(2):161--169, 1986.

\bibitem{MRbook}
J.~E. Marsden and T.~S. Ratiu.
\newblock {\em Introduction to Mechanics and Symmetry: A Basic Exposition of
  Classical Mechanical Systems}, volume \textbf{17~}of {\em Texts in Applied
  Mathematics}.
\newblock Springer, New York, NY, 2 edition, 1999.

\bibitem{historySym}
J.~E. Marsden and A.~Weinstein.
\newblock Comments on the history, theory, and applications of symplectic
  reduction.
\newblock In {\em Quantization of singular symplectic quotients}, volume
  \textbf{198~}of {\em Progr. Math.}, pages 1--19. Birkh\"{a}user, Basel, 2001.

\bibitem{HamiltonianReduction}
J.-P. Ortega and T.~S. Ratiu.
\newblock {\em Momentum maps and {H}amiltonian reduction}, volume
  \textbf{222~}of {\em Progress in Mathematics}.
\newblock Birkh\"{a}user Boston, Inc., Boston, MA, 2004.

\bibitem{souriau}
J.~M. Souriau.
\newblock {\em Structure of dynamical systems}, volume \textbf{149~}of {\em
  Progress in Mathematics}.
\newblock Birkh\"{a}user Boston, Inc., Boston, MA, 1997.

\bibitem{Uhlenbeck}
K.~Uhlenbeck.
\newblock Harmonic maps into {L}ie groups: classical solutions of the chiral
  model.
\newblock {\em J. Differential Geom.}, {\bfseries 30}(1):1--50, 1989.

\bibitem{HPmultiDirac}
J.~Vankerschaver, H.~Yoshimura, and M.~Leok.
\newblock The {H}amilton-{P}ontryagin principle and multi-{D}irac structures
  for classical field theories.
\newblock {\em J. Math. Phys.}, {\bfseries 53}(7):072903, 25, 2012.

\bibitem{DiracCotangent}
H.~Yoshimura and J.~E. Marsden.
\newblock Dirac cotangent bundle reduction.
\newblock {\em J. Geom. Mech.}, {\bfseries 1}(1):87--158, 2009.

\end{thebibliography}
\bibliographystyle{abbrvbf}
\end{document}